\newcommand{\arxiv}[1]{\href{http://arxiv.org/#1}{arXiv:#1}}
\newcommand*{\mailto}[1]{\href{mailto:#1}{\nolinkurl{#1}}}
\newtheorem{theorem}{Theorem}[section]
\newtheorem{lemma}[theorem]{Lemma}
\numberwithin{equation}{section}
\newtheorem{re}{Remark}[section]
\newtheorem{prop}{Proposition}[section]
\newtheorem{theo}{Theorem}[section]
\newtheorem{lem}{Lemma}[section]
\newtheorem{col}{Corollary}[section]
\newcommand{\be}{\begin{equation}}
\newcommand{\ee}{\end{equation}}
\newcommand\bes{\begin{eqnarray}} \newcommand\ees{\end{eqnarray}}
\newcommand{\bess}{\begin{eqnarray*}}
\newcommand{\eess}{\end{eqnarray*}}
\newcommand{\D}{\displaystyle}
\newcommand{\bu}{{\bf u}}
\newcommand{\bH}{{\bf H}}
\newcommand{\bB}{{\bf B}}
\newcommand{\bw}{{\bf w}}
\newcommand{\equ}{\overset{\rm def}{=}}
\def\XXint#1#2#3{{\setbox0=\hbox{$#1{#2#3}{\int}$}
     \vcenter{\hbox{$#2#3$}}\kern-.5\wd0}}
\numberwithin{equation}{section}
\begin{document}

\title[Optimal decay for compressible MHD equations]{Optimal decay for the compressible MHD equations in the critical regularity framework}

\author[Q. Bie]{Qunyi Bie}
\address[Q. Bie]{College of Science $\&$  Three Gorges Mathematical Research Center, China Three Gorges University, Yichang 443002, PR China}
\email{\mailto{qybie@126.com}}

\author[Q. Wang]{Qiru Wang}
\address[Q. Wang]{School of Mathematics, Sun Yat-Sen University, Guangzhou 510275, PR China}
\email{\mailto{mcswqr@mail.sysu.edu.cn}}
\author[Z.-A. Yao]{Zheng-an Yao}
\address[Z.-A. Yao]{School of Mathematics, Sun Yat-Sen University, Guangzhou 510275, PR China}
\email{\mailto{mcsyao@mail.sysu.edu.cn}}

\date\today
\keywords{}
\subjclass[2010]{}

\begin{abstract} In this paper, we study the large time behavior of solutions to the  compressible magnetohydrodynamic equations in the $L^p$-type critical Besov spaces. Precisely, we show that if the initial data in the low frequencies additionally belong to some Besov space $\dot{B}_{2,\infty}^{-\sigma_1}$ with $\sigma_1\in (1-N/2, 2N/p-N/2]$, then the $\dot{B}_{p,1}^0$ norm of the critical global solutions presents the optimal decay  $t^{-\frac{N}{2}(\frac{1}{2}-\frac{1}{p})-\frac{\sigma_1}{2}}$ for $t\rightarrow+\infty$. The pure energy argument without the spectral analysis is performed, which allows us
to remove the usual smallness assumption of low frequencies.

\end{abstract}

\maketitle

\section{Introduction} Magnetohydrodynamics (MHD) is concerned with the motion of conducting fluids in an electromagnetic field and has a very wide range of applications. In view of the dynamic motion of field and the magnetic field interacting strongly on each other, both the hydrodynamic and electrodynamic effects must be considered.  The compressible viscous MHD equations in the isentropic case take
the form (see, e.g.,\cite{cabannes1970theoretical,kulikovskiy1965,laudau1984})
\begin{equation}\label{1.1}
 \left\{\begin{array}{ll}\displaystyle\partial_t \rho+{\rm div}(\rho{\bf u})
=0,\\[1ex]
 \displaystyle\partial_t(\rho{\bf u})+{\rm div}(\rho{\bf u}\otimes{\bf u})+\nabla P(\rho)\\[1ex]
 \quad={\bf B}\cdot\nabla {\bf B}-\frac{1}{2}\nabla(|{\bf B}|^2)+{\rm div}(2\mu D({\bf u})+\lambda{\rm div}\bu\, {\rm Id}),\\[1ex]
 \displaystyle\partial_t{\bB}+({\rm div}\bu)\bB+\bu\cdot\nabla\bB
 -\bB\cdot\nabla\bu=\theta\Delta\bB,\,\,\,\,{\rm div}\bB=0,
 \end{array}
 \right.
 \end{equation}
for $(t,x)\in \mathbb{R}_+\times\mathbb{R}^N\,(N\geq 2)$. Here $\rho=\rho(t,x)\in \mathbb{R}_+$ is the density function of the fluid, ${\bf u}={\bf u}(t,x)\in \mathbb{R}^N$ is the velocity, and ${\bf B}={\bf B}(t,x)\in\mathbb{R}^{N}$ represents the magnetic field. The scalar function $P(\rho)\in\mathbb{R}$ is the pressure, which is an increasing and convex function in $\rho$. The notation $D(\bu)\equ \frac{1}{2}(\nabla\bu+\nabla\bu^T)$ stands for the deformation tensor. The density-dependent functions $\lambda$ and $\mu$ (the bulk and shear viscosities) are supposed to be smooth enough and to satisfy $\mu>0$ and $\lambda+2\mu>0$. The constant $\theta>0$ stands for the magnetic diffusivity acting as a magnetic diffusion coefficient of the magnetic field. The symbol $\otimes$ denotes the Kronecker tensor product such that ${\bf u}\otimes{\bf u}=({\bf u}_i{\bf u}_j)_{1\leq i,j \leq N}$. System \eqref{1.1} is supplemented
with the initial data
\begin{equation}\label{1.2}
  (\rho, \bu, \bB)|_{t=0}=(\rho_0(x), \bu_0(x), \bB_0(x)), \,\,x\in \mathbb{R}^N,
\end{equation}
and we focus on solutions that are close to some constant state $(\rho^\ast, {\bf 0}, \bB^\ast)$ with $\rho^\ast>0$ and the nonzero vector  $\bB^\ast\in \mathbb{R}^N$, at infinity.


There have been a lot of works on MHD by many physicists and mathematicians
due to its physical importance and mathematical challenges, see for example \cite{chen2002global, chen2003existence, ducomet2006the, fan2007vanishing, freistuhler1995existence, hoff2005uniqueness, kawashima1982smooth,wang2003large} and the references therein.   By exploiting an energy method in Fourier spaces, Umeda, Kawashima and Shizuta \cite{umeda1984on} first investigated a rather general class of symmetric hyperbolic-parabolic systems, and found that the dissipative mechanism inducing the optimal decay rates are just the same as that of heat kernel. As a direct application, they obtained such decay rate of solutions to system \eqref{1.1}-\eqref{1.2} (near the equilibrium state $(\rho^\ast, {\bf 0}, \bB^\ast)$). Subsequently, Kawashima \cite{kawashima1984systems} in his doctoral dissertation proved
the global existence of smooth solutions to \eqref{1.1}-\eqref{1.2} in the condition that  the initial data are small
in $H^3(\mathbb{R}^3)$. In addition, the author also derived the following fundamental $L^q$-$L^2$ decay estimate in
$H^3(\mathbb{R}^3)\cap L^q(\mathbb{R}^3)\, (1\leq q<2)$:
\be\label{1.10}
\|(\rho-\rho^\ast, \bu, {\bf B}-{\bf B}^\ast)\|_{L^2{\mathbb{R}^3}}\leq C(1+t)^{-\frac{3}{2}(\frac{1}{q}-\frac{1}{2})}.
\ee
Later on, still for data with high Sobolev regularity, there are a number of works on the long-time behavior of solution to the compressible MHD equations, see for example \cite{chentan2010global,gao2015long, li2011optimal, tan2013optimal, zhang2010some} and the references therein.


As regards global-in-time results, \emph{scaling invariance} plays a fundamental role.  Here we observe  that system \eqref{1.1} is invariant by the transformation
\be\nonumber
\tilde{\rho}(t,x)=\rho(l^2t, lx),~~\tilde{\bf u}(t, x)=l{\bf u}(l^2t, lx),~~\tilde{\bf B}(t, x)=l{\bf B}(l^2t, lx),
\ee
 up to a change of the pressure law $\tilde{P}=l^2P$. A critical space is a space in which the norm is invariant under the scaling
$
(\tilde{e},\tilde{\bf f}, \tilde{\bf g})(x)=(e(lx), l{\bf f}(lx), l{\bf g}(lx)).
$

 When ${\bf B}\equiv {\bf 0}$, system \eqref{1.1} becomes the  compressible Navier-Stokes equations. In the critical framework, there have been a lot of results for the compressible (or incompressible) Navier-Stokes equations, see for example \cite{cannone1997a,  charve2010global,  chen2010global, danchin2000global, danchin2015fourier, danchin2016incompressible, danchin2016optimal, fujita1964on,  haspot2011existence, kozono1994semilinear,  okita2014optimal,  xin2018optimal, xu2019a}. In particular, regarding the large time asymptotic behavior of strong solutions for the compressible Navier-Stokes equations, Okati \cite{okita2014optimal} performed low and high frequency decompositions and proved the time decay rate for strong solutions in the $L^2$  critical framework and in dimension $N\geq 3$. In the survey paper \cite{danchin2015fourier},
 Danchin proposed another description of the time decay which allows to proceed with dimension $N\geq 2$ in the $L^2$ critical framework. Recently,  Danchin and Xu \cite{danchin2016optimal} extended the method of \cite{danchin2015fourier} to get optimal time decay rate in the
 general $L^p$ type critical spaces and in any dimension $N\geq 2$. Later on,  Xu \cite{xu2019a} developed a general low-frequency condition for optimal decay estimates, where the regularity $\sigma_1$ of $\dot{B}_{2,\infty}^{-\sigma_1}$
 belongs to a whole range $(1-\frac{N}{2}, \frac{2N}{p}-\frac{N}{2}]$,  and the proof mainly depends on the refined time-weighted energy approach in the Fourier semi-group framework. Very recently, originated from the idea as in \cite{guo2012decay, strain2006almost}, Xin and Xu \cite{xin2018optimal} developed a new energy  argument to remove the usual smallness assumption of low frequencies studied in \cite{danchin2016optimal}.

As for system \eqref{1.1}-\eqref{1.2} with $\bB^\ast={\bf 0}$, Hao \cite{hao2011well} obtained the global well-posedness of strong solutions in $L^2$-type critical Besov spaces. Consequently,  the authors in \cite{bian2013well, bian2016local, jia2016well} studied the local existence and uniqueness of solutions in the critical $L^p$ framework. Very recently, Shi and Xu \cite{shi2019global} considered the perturbation around the constant equilibrium $(\rho^\ast, {\bf 0}, \bB^\ast)$ with $\bB^\ast\neq {\bf 0}$ and obtained the local and global well-posedness results in the critical $L^p$ framework, and here we list the global well-posedness of strong solutions to system \eqref{1.1} as follows.
\begin{theo}\label{th1.1} {\rm(}\cite{shi2019global}{\rm)}
Let $N\geq 2$ and $p$ fulfill
\begin{equation}\label{1.3}
  2\leq p\leq \min (4, 2N/(N-2))\,\,\, { and}, \,additionally, \,p\neq 4\,\,{if}\,\, N=2.
\end{equation}
Suppose that {\rm div}$\bB_0=0$, $P^\prime(\rho^\ast)>0$ and that \eqref{1.2} is satisfied.
There exists a small positive constant $c=c(p, \mu, \lambda, \theta, P, \rho^\ast, \bB^\ast)$
and a universal integer $j_0\in \mathbb{Z}$ such that if $a_0\equ \rho_0-\rho^\ast\in \dot{B}_{p,1}^{\frac{N}{p}}$, $\bH_0\equ \bB_0-\bB^\ast\in \dot{B}_{p,1}^{\frac{N}{p}-1}$
and if in addition $(a_0^\ell, \bu_0^\ell, \bH_0^\ell)\in \dot{B}_{2,1}^{\frac{N}{2}-1}$
(with the notation $z^\ell\equ \dot{S}_{k_0+1}z$ and $z^h=z-z^\ell$)
with
$$
\mathcal{X}_{p,0}\equ \|(a_0, \bu_0, \bH_0)\|_{\dot{B}_{2,1}^{\frac{N}{2}-1}}^\ell
+\|(\nabla a_0, \bu_0, \bH_0)\|_{\dot{B}_{p,1}^{\frac{N}{p}-1}}^h\leq c,
$$
then the Cauchy problem \eqref{1.1}-\eqref{1.2} admits a unique global-in-time solution $(\rho, \bu, \bB)$
with $\rho=\rho^\ast+a$, $\bB=\bB^\ast+\bH$ and $(a, \bu, \bH)$ in the space $X_p$ defined by
\begin{equation}\nonumber
\left.\begin{array}{ll}
(a,\bu,\bH)^\ell\in \widetilde{\mathcal{C}_b}(\mathbb{R}_+; \dot{B}_{2,1}^{\frac{N}{2}-1})\cap
L^1(\mathbb{R}_+; \dot{B}_{2,1}^{\frac{N}{2}+1}), \,\,\,a^h\in \widetilde{\mathcal{C}_b}(\mathbb{R}_+; \dot{B}_{p,1}^{\frac{N}{p}})\cap
L^1(\mathbb{R}_+; \dot{B}_{p,1}^{\frac{N}{p}}),\\[1ex]
(\bu, \bH)^h\in \widetilde{\mathcal{C}_b}(\mathbb{R}_+; \dot{B}_{p,1}^{\frac{N}{p}-1})\cap
L^1(\mathbb{R}_+; \dot{B}_{p,1}^{\frac{N}{p}+1}),
\end{array}
\right.
\end{equation}
where $s\in \mathbb{R}, 1\leq q\leq \infty$.

Furthermore, we get for some constant $C=C(p,\mu,\lambda, \theta, P, \rho^\ast, \bB^\ast)$,
$$
\mathcal{X}_p(t)\leq C\mathcal{X}_{p,0},
$$
for any $t>0$, where
\begin{equation}\label{1.4}
\begin{split}
\mathcal{X}_p(t)&\equ \|(a, \bu, \bH)\|_{\widetilde{L}^\infty(\dot{B}_{2,1}^{\frac{N}{2}-1})}^\ell+
\|(a, \bu, \bH)\|_{{L}^1(\dot{B}_{2,1}^{\frac{N}{2}+1})}^\ell
+\|a\|_{\widetilde{L}^\infty(\dot{B}_{p,1}^{\frac{N}{p}})}^h
+\|a\|_{{L}^1(\dot{B}_{p,1}^{\frac{N}{p}})}^h\\[1ex]
&\quad+\|(\bu,\bH)\|_{\widetilde{L}^\infty(\dot{B}_{p,1}^{\frac{N}{p}-1})}^h
+\|(\bu,\bH)\|_{{L}^1(\dot{B}_{p,1}^{\frac{N}{p}+1})}^h.
\end{split}
\end{equation}
\end{theo}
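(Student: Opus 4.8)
The plan is to follow the hybrid Besov program developed for compressible Navier--Stokes in the $L^p$ critical setting (as in the cited works of Danchin--Xu and Xin--Xu), adapted to the Lorentz and induction couplings carried by the fixed state $\bB^\ast$. First I would pass to the perturbation variables $a\equ\rho-\rho^\ast$ and $\bH\equ\bB-\bB^\ast$ and rewrite \eqref{1.1} as a constant-coefficient linear part plus quadratic and density-dependent remainders. After linearizing around $(\rho^\ast,{\bf 0},\bB^\ast)$, the density and velocity are coupled through the acoustic pair $(\nabla a,\ \mathrm{div}\,\bu)$, while velocity and magnetic field are coupled through the Alfv\'en-type pair $(\bB^\ast\cdot\nabla\bH,\ \bB^\ast\cdot\nabla\bu)$ together with the induction term $(\mathrm{div}\,\bu)\bB^\ast$. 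Here $\bu$ and $\bH$ carry genuine parabolic dissipation, through the Lam\'e operator $\mathcal A\bu\equ\mu\Delta\bu+(\lambda+\mu)\nabla\,\mathrm{div}\,\bu$ and through $\theta\Delta\bH$ respectively, whereas $a$ solves only a transport equation and is at best damped.

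Because $L^p$ is not Hilbertian, I would \emph{not} estimate in $\dot B_{p,1}$ at all scales. At low frequencies I would move to the $L^2$ framework, which is legitimate since on $\{|\xi|\lesssim 2^{k_0}\}$ the $L^p$ and $L^2$ norms are comparable by Bernstein; there I would run a Lyapunov/energy argument on each dyadic block of the linearized three-field system, exploiting the skew-symmetry of the acoustic and Alfv\'en couplings to build a functional equivalent to $\|(a,\bu,\bH)^\ell\|_{\dot B_{2,1}^{N/2-1}}^2$ whose dissipation controls the $L^1(\dot B_{2,1}^{N/2+1})$ norm in \eqref{1.4}. At high frequencies I would use the maximal regularity of the Lam\'e and heat semigroups in $L^p$ to control $\|(\bu,\bH)\|^h$. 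The obstruction there is that $a$ enjoys no smoothing; to recover its damping I would introduce the effective velocity $\bw\equ\bu+\kappa\nabla(-\Delta)^{-1}a$, chosen so that the stiff gradient $\nabla a$ is absorbed and $a$ obeys a genuinely damped equation $\partial_t a + c\,a=\cdots$ modulo lower-order terms, yielding both $\|a\|^h_{\widetilde L^\infty(\dot B_{p,1}^{N/p})}$ and $\|a\|^h_{L^1(\dot B_{p,1}^{N/p})}$.

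Next I would close the nonlinearity. The quadratic terms $\mathrm{div}(\rho\bu\otimes\bu)$, the Lorentz/induction contributions $\bH\cdot\nabla\bH$, $\nabla|\bH|^2$, $(\mathrm{div}\,\bu)\bH$, $\bu\cdot\nabla\bH$, $\bH\cdot\nabla\bu$, and the density-dependent remainders $I(a)\nabla a$ and $g(a)\mathcal A\bu$ would be bounded by paraproduct/remainder and composition estimates in tame form, allocating each factor to whichever space ($\dot B_{2,1}$ at low frequency, $\dot B_{p,1}$ at high frequency) controls it, and treating the transport term $\bu\cdot\nabla a$ through a commutator estimate so that no derivative loss on $a$ survives. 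Each contribution must be shown to be at least quadratic in $\mathcal X_p(t)$. Collecting the linear and nonlinear bounds gives a closed a priori inequality of the form $\mathcal X_p(t)\le C\mathcal X_{p,0}+C\mathcal X_p(t)^2$, and a standard continuity argument combined with local existence in the same functional space then upgrades the local solution to a global one with $\mathcal X_p(t)\le C\mathcal X_{p,0}$ under the smallness $\mathcal X_{p,0}\le c$.

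The hard part, and the genuinely new feature relative to compressible Navier--Stokes, is the Alfv\'en coupling $\bB^\ast\cdot\nabla$. One must verify that the linearized three-field symbol remains \emph{uniformly dissipative}, so that the skew interactions $(\bB^\ast\cdot\nabla\bH,\ \bB^\ast\cdot\nabla\bu)$ do not close the spectral gap, and that the effective-unknown reduction still isolates and damps the density mode in their presence. The fixed direction $\bB^\ast$ also introduces an anisotropy that must be reconciled with the isotropic Littlewood--Paley estimates. Handling this coupling uniformly across frequencies, rather than term by term, is the delicate step, and it is where the structural restrictions \eqref{1.3} on $p$ and $N$ enter the argument.
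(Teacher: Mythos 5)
Your proposal follows essentially the same route as the paper's framework (and the proof in \cite{shi2019global}, from which Theorem \ref{th1.1} is quoted verbatim; the paper's Section \ref{s:4} reprises exactly this machinery): a low-frequency $L^2$ dyadic energy/Lyapunov argument in which the acoustic and Alfv\'en couplings cancel by skew-symmetry, a high-frequency $L^p$ argument with a Haspot-type effective velocity $\bw=\nabla(-\Delta)^{-1}(a-{\rm div}\,\bu)$ restoring damping for $a$, paraproduct/composition/commutator nonlinear estimates, and a bootstrap closing $\mathcal{X}_p(t)\lesssim \mathcal{X}_{p,0}+\mathcal{X}_p(t)^2$. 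Two small corrections to your final paragraph: the Alfv\'en coupling is not a residual difficulty, since the skew-symmetry you already invoke makes those terms cancel identically in the block energy identities (and at high frequencies they are lower-order sources absorbed by the parabolic dissipation once $k_0$ is taken large), and the restriction \eqref{1.3} on $p$ does not originate from the magnetic coupling but from the hybrid $L^2$-low/$L^p$-high product estimates, exactly as in the Navier--Stokes case of \cite{danchin2016optimal}.
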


 The natural next problem is to explore the large time asymptotic behavior of global solutions constructed above. Shi and Xu \cite{shi2018large}
applied  Fourier analysis techniques to give precise description for the large time asymptotic behavior of solutions, not only in Lebesgue spaces but also in a full family of Besov spaces with negative regularity indexes. In this paper, motivated by the works \cite{guo2012decay, shi2018large, strain2006almost, xin2018optimal}, we intend to establish the optimal decay for the compressible MHD equations in the $L^p$ type critical framework without the smallness assumption of low frequencies.

\section{Main results}\label{s:2}
\setcounter{equation}{0}\setcounter{section}{2}\indent
 Let us first rewrite system \eqref{1.1} as the nonlinear perturbation form of constant equilibrium state $(\rho^\ast, {\bf 0}, \bB^\ast)$, looking at the nonlinearities as source terms.
To simplify the statement of main results, we assume that $\rho^\ast=1$, $\bB^\ast=I$ ($I$ is an arbitrary nonzero constant vector satisfying $|I|=1$), $P^\prime(\rho^\ast)=1$, $\theta=1$ and $\nu^\ast\equ 2\mu^\ast+\lambda^\ast=1$ (with $\mu^\ast\equ \mu(\rho^\ast)$ and $\lambda^\ast\equ \lambda(\rho^\ast)$). Consequently, in term of the new variables $(a, \bu, \bH)$, system \eqref{1.1} becomes
\begin{equation}\label{1.5}
\left\{\begin{array}{ll}
\partial_ta+{\rm div}\bu=f,\\[1ex]
\partial_t\bu-\mathcal{A}\bu+\nabla a+\nabla(I\cdot\bH)-I\cdot\nabla\bH={\bf g},\\[1ex]
\partial_t\bH-\Delta \bH+({\rm div}\bu)I-I\cdot\nabla\bu={\bf m},\\[1ex]
{\rm div}\bH=0,
\end{array}
\right.
\end{equation}
where
\begin{equation}\nonumber
\begin{split}
\left.
\begin{array}{ll}
&f\equ -{\rm div}(a\bu),\\[2ex]
&{\bf g}\equ -\bu\cdot\nabla\bu-\pi_1(a)\mathcal{A}\bu-\pi_2(a)\nabla a+\frac{1}{1+a}{\rm div}(2\widetilde\mu(a)
D(\bu)+\widetilde\lambda(a){\rm div}\bu\,{\rm Id})\\[1.5ex]\displaystyle
&\quad\quad+\pi_1(a)(\nabla(I\cdot\bH)-I\cdot\nabla\bH)-\frac{1}{1+a}(\frac{1}{2}\nabla
|\bH|^2-\bH\cdot\nabla\bH),\\[2ex]
&{\bf m}\equ -\bH({\rm div}\bu)+\bH\cdot\nabla\bu-\bu\cdot\nabla\bH,
\end{array}
\right.
\end{split}
\end{equation}
with
\begin{equation}\nonumber
\begin{array}{ll}\displaystyle
\mathcal{A}\equ \mu^\ast\Delta+(\lambda^\ast+\mu^\ast)\nabla{\rm div},\,\,{\rm here}\,\,2\mu^\ast
+\lambda^\ast=1\,\,{\rm and}\,\,\mu^\ast>0,\\[1ex]\displaystyle
\pi_1(a)\equ \frac{a}{1+a},\,\,\,\pi_2(a)\equ \frac{P^\prime(1+a)}{1+a}-1,\\[2ex]\displaystyle
\widetilde\mu(a)\equ \mu(1+a)-\mu(1),\,\,\,\widetilde\lambda(a)\equ \lambda(1+a)-\lambda(1).
\end{array}
\end{equation}
Note that $\pi_1, \pi_2, \widetilde\mu$ and $\widetilde\lambda$ are smooth functions satisfying
$$
\pi_1(0)=\pi_2(0)=\widetilde\mu(0)=\widetilde\lambda(0)=0.
$$

Denote $\Lambda^sf\equ \mathcal{F}^{-1}(|\xi|^s\mathcal{F}f)$ for $s\in\mathbb{R}$. Now, we state the
main results as follows.
\begin{theo}\label{th1.2}
Let $N\geq 2$ and $p$ satisfy assumption \eqref{1.3}. Let $(\rho,\bu, \bB)$ be the global solution addressed by Theorem \ref{th1.1}. If in addition
$(a_0, \bu_0, \bH_0)^\ell\in \dot{B}_{2,\infty}^{-\sigma_1}\,(1-\frac{N}{2}<\sigma_1\leq \sigma_0
\equ \frac{2N}{p}-\frac{N}{2})$ such that $\|(a,\bu_0,\bH_0)\|_{\dot{B}_{2,\infty}^{-\sigma_1}}^\ell$
is bounded, then we have
\begin{equation}\label{1.6}
\|(a,\bu,\bH)\|_{\dot{B}_{p,1}^\sigma}\lesssim (1+t)^{-\frac{N}{2}(\frac{1}{2}-\frac{1}{p})-\frac{\sigma+\sigma_1}{2}},
\end{equation}
where $-\sigma_1
-\frac{N}{2}+\frac{N}{p}<\sigma\leq \frac{N}{p}-1$ for all $t\geq 0$.
\end{theo}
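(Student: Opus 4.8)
The plan is to adapt the pure energy strategy of Xin--Xu \cite{xin2018optimal} to the coupled system \eqref{1.5}, splitting the analysis into a low-frequency part (which produces the heat-type decay) and a high-frequency part (where parabolic smoothing yields a faster rate), and thereby to dispense with any spectral or semigroup analysis.

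\emph{Step 1: propagation of the negative low-frequency norm.} First I would prove that $\|(a,\bu,\bH)(t)\|^\ell_{\dot{B}_{2,\infty}^{-\sigma_1}}$ stays bounded for all $t\ge 0$. Applying the dyadic block $\dot\Delta_j$ to \eqref{1.5} and testing in $L^2$, the velocity $\bu$ and the magnetic perturbation $\bH$ carry genuine parabolic dissipation, while the density $a$ only gains damping through its coupling with ${\rm div}\,\bu$. To recover the missing dissipation for $a$ at frequency $2^j$ I would augment $\|\dot\Delta_j(a,\bu,\bH)\|_{L^2}^2$ by a small multiple of the cross term $\langle \dot\Delta_j\nabla a,\dot\Delta_j\bu\rangle$ (suitably normalized), exactly as for the compressible Navier--Stokes system, but I must additionally keep track of the first-order magnetic couplings $\nabla(I\cdot\bH)-I\cdot\nabla\bH$ and $({\rm div}\,\bu)I-I\cdot\nabla\bu$. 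After weighting by $2^{-j\sigma_1}$, taking the supremum over $j\le k_0$ and integrating in time, the nonlinear sources $f,{\bf g},{\bf m}$ are estimated by the product and composition laws in Besov spaces together with the global bound $\mathcal{X}_p(t)\le C\mathcal{X}_{p,0}$ supplied by Theorem \ref{th1.1}; this closes the estimate and gives $\sup_{t\ge 0}\|(a,\bu,\bH)(t)\|^\ell_{\dot{B}_{2,\infty}^{-\sigma_1}}\lesssim \|(a_0,\bu_0,\bH_0)\|^\ell_{\dot{B}_{2,\infty}^{-\sigma_1}}+\mathcal{X}_{p,0}$.

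\emph{Step 2: Lyapunov inequality and low-frequency decay.} Next I would extract the algebraic decay by a time-weighted Lyapunov argument rather than by inverting the linearized flow. Set $s=\sigma+N(\frac12-\frac1p)$. Building on the effective functional of Step 1, I would derive in the low-frequency regime a differential inequality $\frac{d}{dt}\mathcal{L}(t)+c\,\mathcal{D}(t)\le 0$, where $\mathcal{L}(t)$ is equivalent to $\big(\|(a,\bu,\bH)\|^\ell_{\dot{B}_{2,1}^{s}}\big)^2$ and the dissipation $\mathcal{D}(t)$ controls $\big(\|(a,\bu,\bH)\|^\ell_{\dot{B}_{2,1}^{s+1}}\big)^2$. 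The decisive point is a \emph{loss-free interpolation}: since the $\dot{B}_{2,\infty}^{-\sigma_1}$ norm is bounded by Step 1 and $s>-\sigma_1$ (which is precisely the constraint $\sigma>-\sigma_1-\frac N2+\frac Np$), interpolating $\mathcal{L}$ between this negative norm and $\mathcal{D}$ yields $\mathcal{L}\lesssim \mathcal{D}^{\theta}$ with $\theta=\frac{s+\sigma_1}{s+1+\sigma_1}\in(0,1)$. This converts the Lyapunov inequality into $\frac{d}{dt}\mathcal{L}+c\,\mathcal{L}^{1/\theta}\le 0$, whence $\mathcal{L}(t)\lesssim(1+t)^{-2\beta}$ with $\beta=\frac N2(\frac12-\frac1p)+\frac{\sigma+\sigma_1}{2}$, i.e. $\|(a,\bu,\bH)\|^\ell_{\dot{B}_{2,1}^{s}}\lesssim(1+t)^{-\beta}$.

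\emph{Step 3: transfer to $\dot{B}_{p,1}^\sigma$ and high frequencies.} I would then pass from the $L^2$-based low-frequency decay to the target $L^p$ norm through the low-frequency embedding $\dot{B}_{2,1}^{s}\hookrightarrow\dot{B}_{p,1}^{s-N(1/2-1/p)}=\dot{B}_{p,1}^{\sigma}$ (valid for $p\ge 2$ via Bernstein), which accounts exactly for the loss $N(\frac12-\frac1p)$ in the exponent. For the high-frequency part, the constructed solution already lies in $X_p$ with $(\bu,\bH)^h$ at regularity $\frac Np-1$ and $a^h$ at $\frac Np$; exploiting the exponential-in-frequency damping of the high modes encoded in $\mathcal{X}_p$, a companion time-weighted energy estimate shows that $\|(a,\bu,\bH)\|^h_{\dot{B}_{p,1}^\sigma}$ decays at least as fast as the low-frequency rate for $\sigma\le\frac Np-1$. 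Adding the two contributions gives \eqref{1.6}.

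\emph{Main obstacle.} I expect the chief difficulty to be the structural treatment of the first-order magnetic couplings. In contrast with the Navier--Stokes case, the $\bu$- and $\bH$-equations are linked by the order-one operators $\nabla(I\cdot\bH)-I\cdot\nabla\bH$ and $({\rm div}\,\bu)I-I\cdot\nabla\bu$; one must verify that, for the fixed constant direction $I$ with $|I|=1$, their contributions to the $L^2$ energy cancel up to absorbable terms (a skew-symmetry within the $(\bu,\bH)$ block), so that the dissipative structure---and hence the loss-free interpolation of Step 2---is preserved uniformly across the low-frequency range. Reconciling this cancellation with the product estimates for $f,{\bf g},{\bf m}$ in the negative-regularity norm is the technical heart of the argument.
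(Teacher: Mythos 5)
Your overall skeleton---propagating the $\dot{B}_{2,\infty}^{-\sigma_1}$ bound by a pure energy argument (your Step 1) and then converting dissipation into algebraic decay through a loss-free interpolation against that bounded negative norm (your Step 2)---is exactly the paper's strategy, and your Step 1 matches the paper's Lemma \ref{le3} and the bound \eqref{4.46} (modulo the fact that the decisive tool there is the non-classical product law \eqref{4.2} of Proposition \ref{pr4.1}, valid on the whole range $1-\frac N2<\sigma_1\le\frac{2N}p-\frac N2$, which you leave unspecified as ``product and composition laws''). However, your Steps 2--3 contain a genuine gap: you decouple low and high frequencies, deriving a purely low-frequency Lyapunov inequality $\frac{d}{dt}\mathcal{L}+c\,\mathcal{D}\le 0$ and postponing the high modes to a separate ``companion'' estimate. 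This decoupling fails because the nonlinearities mix frequencies: the low-frequency projection of $(f,{\bf g},{\bf m})$ contains purely high-frequency products (for instance the term $T_{{\rm div}\bu}a^h$ estimated in \eqref{5.20}), and by \eqref{5.13} these are controlled only by the \emph{high-frequency} dissipation $\|a\|_{\dot{B}_{p,1}^{N/p}}^h+\|(\bu,\bH)\|_{\dot{B}_{p,1}^{N/p+1}}^h$, never by your $\mathcal{D}$; a pointwise-in-time ODE argument does not tolerate such unabsorbed sources (time-integrability, which suffices in Step 1, is not enough here). Symmetrically, your Step 3 cannot work as stated: boundedness of $\mathcal{X}_p(t)$ yields boundedness, not decay, of the high frequencies; their decay must come from the damped high-frequency inequality of Lemma \ref{le2} (built on the effective velocity $\bw=\nabla(-\Delta)^{-1}(a-{\rm div}\bu)$ of \cite{haspot2011existence}, needed to avoid the derivative loss in ${\rm div}(a\bu)$, and which you never introduce) fed into the \emph{same} differential inequality. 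The paper resolves both points at once with the single functional $\|(a,\bu,\bH)\|_{\dot{B}_{2,1}^{N/2-1}}^\ell+\|(\nabla a,\bu,\bH)\|_{\dot{B}_{p,1}^{N/p-1}}^h$: all nonlinear terms are absorbed by the total dissipation via smallness of $\mathcal{X}_p$ (giving \eqref{5.39}), and since the high-frequency norms are $\le\mathcal{X}_p\ll1$ one may write $x\ge x^{1+\frac{2}{N/2-1+\sigma_1}}$ for them, so the whole quantity obeys the Lyapunov inequality \eqref{5.41}.

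A second, smaller flaw: you propose to run the energy/Lyapunov argument at every level $s=\sigma+N(\frac12-\frac1p)$. Besides being unnecessary, this is problematic: near the lower endpoint $s$ is negative (for $p=2$, $N=3$ one has $s=\sigma\in(-\sigma_1,\,\frac12]$ with $\sigma_1$ up to $\frac32$), and the product estimates needed to bound $\|(f,{\bf g},{\bf m})\|_{\dot{B}_{2,1}^{s}}^\ell$ by small constants times the dissipation at that level are not established and are delicate at such negative regularities. The paper instead proves decay only at the top regularity $\frac N2-1$, where the nonlinear estimates are inherited from the global-existence framework, and then obtains every intermediate $\sigma$ for free by interpolating between the bounded $\dot{B}_{2,\infty}^{-\sigma_1}$ norm and the decaying $\dot{B}_{2,\infty}^{N/2-1}$ norm, as in \eqref{5.44}--\eqref{5.45}. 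You should restructure Steps 2--3 accordingly: one coupled low-plus-high Lyapunov inequality at the top regularity, followed by interpolation in $\sigma$.
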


By applying improved Gagliardo-Nirenberg inequalities, the  optimal  decay estimates of $\dot{B}_{2,\infty}^{-\sigma_1}$-$L^r$ type could be deduced as follows.
\begin{col}\label{col1}
Let those assumptions of Theorem \ref{th1.2} be fulfilled. Then the corresponding solution $(a,\bu,\bH)$ admits
\begin{equation}\label{1.7}
\|\Lambda^l(a,\bu,\bH)\|_{L^r}\lesssim (1+t)^{-\frac{N}{2}(\frac{1}{2}-\frac{1}{r})-\frac{l+\sigma_1}{2}},
\end{equation}
where $-\sigma_1
-\frac{N}{2}+\frac{N}{p}<l+\frac{N}{p}-\frac{N}{r}\leq \frac{N}{p}-1$ for $p\leq r\leq \infty$ and $t\geq 0$.
\end{col}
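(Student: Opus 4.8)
The plan is to obtain the Lebesgue-type estimate \eqref{1.7} as a direct consequence of the Besov decay \eqref{1.6} already furnished by Theorem \ref{th1.2}, interpolating with an improved Gagliardo--Nirenberg inequality so that no further time-weighted energy estimate is needed. The key analytic input is the Besov form of the Gagliardo--Nirenberg--Sobolev inequality: for $1\le p\le r\le\infty$ and real numbers $s_1\le s_2$ with $l+\frac{N}{p}-\frac{N}{r}=(1-\theta)s_1+\theta s_2$ for some $\theta\in[0,1]$, one has
$$
\|\Lambda^l f\|_{L^r}\lesssim \|f\|_{\dot{B}^{s_1}_{p,1}}^{\,1-\theta}\,\|f\|_{\dot{B}^{s_2}_{p,1}}^{\,\theta}.
$$
This is built from the scaling embedding $\dot{B}^{\sigma}_{p,1}\hookrightarrow \dot{B}^{l}_{r,1}$ (valid for $p\le r$ and $\sigma-\frac{N}{p}=l-\frac{N}{r}$), the endpoint-friendly embedding $\dot{B}^{0}_{r,1}\hookrightarrow L^r$ (which still holds at $r=\infty$ via $\dot{B}^{0}_{\infty,1}\hookrightarrow L^\infty$), and the logarithmic-convexity interpolation between two homogeneous Besov norms of the same integrability index.

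Given an admissible pair $(l,r)$ from the statement, I would set $\sigma^\ast\equ l+\frac{N}{p}-\frac{N}{r}$ and observe that the hypothesis $-\sigma_1-\frac{N}{2}+\frac{N}{p}<\sigma^\ast\le \frac{N}{p}-1$ is precisely the admissible range of the regularity index $\sigma$ in Theorem \ref{th1.2}. I then choose $s_1,s_2$ lying in that same range with $s_1\le \sigma^\ast\le s_2$ (for instance $s_2=\frac{N}{p}-1$ and $s_1$ taken slightly above the lower threshold $-\sigma_1-\frac{N}{2}+\frac{N}{p}$), which fixes $\theta\in[0,1]$ through $\sigma^\ast=(1-\theta)s_1+\theta s_2$. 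Feeding \eqref{1.6} into each factor gives $\|f\|_{\dot{B}^{s_i}_{p,1}}\lesssim (1+t)^{-\frac{N}{2}(\frac12-\frac1p)-\frac{s_i+\sigma_1}{2}}$ for $i=1,2$, and inserting these bounds into the interpolation inequality produces a time-decay exponent equal to the convex combination of the two individual exponents.

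The concluding step is arithmetic: since the exponent $\frac{N}{2}(\frac12-\frac1p)+\frac{\sigma+\sigma_1}{2}$ delivered by Theorem \ref{th1.2} is affine in $\sigma$, its convex combination at $s_1,s_2$ collapses to its value at $\sigma^\ast$, namely $\frac{N}{2}(\frac12-\frac1p)+\frac{\sigma^\ast+\sigma_1}{2}$; substituting $\sigma^\ast=l+\frac{N}{p}-\frac{N}{r}$ simplifies this to $\frac{N}{2}(\frac12-\frac1r)+\frac{l+\sigma_1}{2}$, which is exactly the rate claimed in \eqref{1.7}. The corollary is therefore essentially a bookkeeping exercise once Theorem \ref{th1.2} is in hand; the only points demanding care --- and hence the main (mild) obstacle --- are verifying that $\sigma^\ast$ and the chosen $s_1,s_2$ stay strictly above $-\sigma_1-\frac{N}{2}+\frac{N}{p}$ and at most $\frac{N}{p}-1$ uniformly over all admissible $(l,r)$, checking the endpoint $r=\infty$ separately, and noting that when $\sigma^\ast=\frac{N}{p}-1$ the interpolation degenerates ($\theta=0$) to the plain embedding bound $\|\Lambda^l f\|_{L^r}\lesssim \|f\|_{\dot{B}^{\sigma^\ast}_{p,1}}$.
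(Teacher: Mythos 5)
Your proposal is correct, and it follows the paper's overall strategy: deduce \eqref{1.7} from the Besov decay \eqref{1.6} of Theorem \ref{th1.2} by interpolation, using the fact that the decay exponent is affine in the regularity index so that the convex combination collapses to the claimed rate. The implementation differs in which inequality carries the interpolation. The paper invokes the Lebesgue-space Gagliardo--Nirenberg inequality (Proposition \ref{pra.8}) in the form $\|\Lambda^l f\|_{L^r}\lesssim\|\Lambda^m f\|_{L^p}^{1-\theta_2}\|\Lambda^k f\|_{L^p}^{\theta_2}$ with $m=\frac{N}{p}-1$ and $k=-\sigma_1-N(\frac12-\frac1p)+\varepsilon$, and then converts each factor into a Besov norm through $\dot{B}^0_{p,1}\hookrightarrow L^p$ before applying \eqref{1.6}; you instead stay inside the Besov scale, chaining $\dot{B}^0_{r,1}\hookrightarrow L^r$, the homogeneous multiplier property of $\Lambda^l$, the Sobolev embedding $\dot{B}^{\sigma^\ast}_{p,1}\hookrightarrow\dot{B}^{l}_{r,1}$ with $\sigma^\ast\equ l+\frac{N}{p}-\frac{N}{r}$, and real interpolation between $\dot{B}^{s_1}_{p,1}$ and $\dot{B}^{s_2}_{p,1}$ (Proposition \ref{pr2.1}). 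Your route actually buys a simplification worth making explicit: since $\sigma^\ast$ itself lies in the admissible range of Theorem \ref{th1.2}, including its right endpoint $\frac{N}{p}-1$, the interpolation step is redundant --- the two embeddings already give $\|\Lambda^l(a,\bu,\bH)\|_{L^r}\lesssim\|(a,\bu,\bH)\|_{\dot{B}^{\sigma^\ast}_{p,1}}$, and \eqref{1.6} at $\sigma=\sigma^\ast$ yields \eqref{1.7} directly, with no choice of $s_1,s_2$ and no $\varepsilon$-perturbation at the lower threshold (which the paper's two-index Lebesgue formulation does require in order to keep $\theta_2\in(0,1)$). One trivial slip, not affecting correctness: under your normalization $\sigma^\ast=(1-\theta)s_1+\theta s_2$ with $s_2=\frac{N}{p}-1$, the degenerate case $\sigma^\ast=\frac{N}{p}-1$ corresponds to $\theta=1$, not $\theta=0$.
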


In the following, we give some comments.

\begin{re}
{\rm The low-frequency assumption of initial data in \cite{shi2018large} is at the endpoint $\sigma_0$ and the corresponding norm needs to be small enough, i.e., there exists a positive constant $c=c(p,\mu, \lambda, P, {\bf B}^\ast)$ such that
$
\|(a,\bu_0,\bH_0)\|_{\dot{B}_{2,\infty}^{-\sigma_0}}^\ell\leq c\,\,\,{\rm with}\,\,\,\sigma_0\equ \frac{2N}{p}-\frac{N}{2}.
$
Here, the new lower bound $1-\frac{N}{2}<\sigma_1\leq \sigma_0
$ enables us to enjoy larger freedom on the choice of $\sigma_1$, which allows to obtain more optimal decay estimates in the $L^p$ framework. In addition, the smallness of low frequencies is no longer needed in Theorem \ref{th1.2} and Corollary \ref{col1}.}
\end{re}
\begin{re}
 {\rm   In \cite{shi2018large}, there is a little loss on decay rates  due to the use of different Sobolev embeddings at low (or high) frequencies. For example, when $\sigma_1=\sigma_0$, the result in \cite{shi2018large} presents that the solution itself decays to equilibrium in $L^p$ norm with the rate of $O(t^{-\frac{N}{p}+\frac{N}{4}})$, which is no faster than that of  $O(t^{-\frac{N}{2p}})$ derived from  Corollary \ref{col1} above. }
 \end{re}
 \begin{re}
   {\rm  Condition \eqref{1.3} may allow  us to consider the case $p>N$, so that the regularity index $\frac{N}{p}-1$ of $(\bu,\bH)$ becomes negative in physical dimensions $N=2, 3$. Our result thus applies to large highly oscillating initial velocities
   and magnetic fields (see \cite{charve2010global, chen2010global} for more details). }
 \end{re}

 Let us give some illustration on the proof of main results.  Based on the works of \cite{danchin2000global, guo2012decay,  haspot2011existence, strain2006almost}, Xin and Xu \cite{xin2018optimal} developed a pure energy argument to establish the optimal decay for the barotropic compressible Navier-Stokes equations in the $L^p$ critical framework. Although the current proofs are in spirit of the works mentioned above,
 we have some new observations. More precisely, as pointed out in \cite{xin2018optimal}, the nonlinear estimates in the low frequencies (that is $\|(f,{\bf g}, {\bf m})\|_{\dot{B}_{2,\infty}^{-\sigma_1}}^\ell$) play an important role in the process of  proving Theorem \ref{th1.2}. They employed  different Sobolev embeddings and interpolations  to deal with the nonlinear terms in the non oscillation case $(2\leq p \leq N)$ and in the oscillation case $(p>N)$, respectively. Here,  we develop a new non-classical product estimate in the low frequencies (see \eqref{4.2} below), which enables us to unify  the estimates in the non oscillation case and the oscillation one. On the other hand, compared with \cite{xin2018optimal},  due to the appearance of the magnetic field, we need to take care of the  nonlinear estimates for those terms including the magnetic field. To the end, we make full use of the structure of the MHD equations itself.  For example, regarding the estimate of trinomial term $\frac{1}{1+a}(\frac{1}{2}\nabla |\bH|^2-\bH\cdot \nabla \bH)$, we  are going to take full advantage of its symmetrical structure (see \eqref{4.200}-\eqref{4.44}, \eqref{5.37} and \eqref{5.38} below).

 The rest of this paper is structured as follows. In  Section \ref{s:3}, we recall some basic properties of  the homogeneous Besov spaces. In Section \ref{s:4}, making use of  the pure energy arguments, we investigate the low-frequency and high-frequency estimates of solutions. Section \ref{s:5} is devoted to the estimation of $L^2$-type Besov norms at low frequencies, which plays the key role in deriving the Lyapunov-type inequality for energy norms. Section \ref{s:6}, i.e., the last section presents the proofs of Theorem \ref{th1.2} and Corollary \ref{col1}.

 Throughout the paper, $C$ stands for a harmless ``constant", and we sometimes write $A\lesssim B$ as an equivalent to $A\leq CB$. The notation $A\approx B$ means that $A\lesssim B$ and $B\lesssim A$. For any Banach space $X$ and $u, v\in X$, we agree that $\|(u,v)\|_X\equ \|u\|_X+\|v\|_X$. For  $p\in [1,+\infty]$ and $T>0$, the notation $L^p(0,T;X)$ or $L^p_T(X)$
designates the set of measurable functions $f:[0,T]\rightarrow X$ with $t\mapsto
\|f(t)\|_X$ in $L^p(0,T)$, endowed with the norm
$$
\|f\|_{L^p_T(X)}:=\bigl{\|}\|f\|_X\bigr{\|}_{L^p(0,T)}.
$$
We agree that $\mathcal{C}([0,T];X)$ denotes the set of continuous functions from
$[0,T]$ to $X$.
\section{Preliminaries}\label{s:3}
\setcounter{equation}{0}\setcounter{section}{3}\indent
We first recall the definition of homogeneous Besov spaces. They could be defined by using a dyadic partition of unity in Fourier variables called homogeneous Littlewood-Paley decomposition. To this end, choose a radial function $\varphi\in \mathcal{S}(\mathbb{R}^N)$ supported in $\mathcal{C}=\{\xi\in\mathbb{R}^N, \frac{3}{4}\leq |\xi|\leq \frac{8}{3}\}$ such that
$
\sum_{j\in\mathbb{Z}}\varphi(2^{-j}\xi)=1\quad\!\!{\rm if}\quad\!\!\xi\neq 0.
$
The homogeneous frequency localization operator $\dot{\Delta}_j$ and $\dot{S}_j$ are defined by
$$
\dot{\Delta}_j u=\varphi (2^{-j}D)u, \quad\,\dot{S}_j u=\sum_{k\leq j-1}\dot{\Delta}_k u\quad\,{\rm for}\quad\,j\in\mathbb{Z}.
$$
With our choice of $\varphi$, it is easy to see that
\be\label{1.8}
\dot{\Delta}_j\dot{\Delta}_kf=0\,\,\,{\rm if}\,\,\,|j-k|\geq 2,\,\,\,\,\,{\rm and}\,\,\,\,\dot{\Delta}_j(\dot{S}_{k-1}\dot{\Delta}_kf=0)
\,\,\,{\rm if}\,\,\,|j-k|\geq 5.
\ee

Let us denote the space $\mathcal{Y}^\prime(\mathbb{R}^N)$ by the quotient space of $\mathcal{S}^\prime(\mathbb{R}^N)/\mathcal{P}$ with the polynomials space $\mathcal{P}$. The formal equality
$
u=\sum_{k\in\mathbb{Z}}\dot{\Delta}_k u
$
holds true for $u\in \mathcal{Y}^\prime(\mathbb{R}^N)$ and is called the homogeneous Littlewood-Paley decomposition.

We then define, for $s\in\mathbb{R}$, $1\leq p, r\leq +\infty$, the homogeneous Besov space
$$
\dot{B}_{p,r}^s={\Big\{}f\in\mathcal{Y}^\prime(\mathbb{R}^N): \|f\|_{\dot{B}_{p,r}^s}<+\infty{\Big\}},
$$
where
$$
\|f\|_{\dot{B}_{p,r}^s}:=\|2^{ks}\|\dot{\Delta}_k f\|_{L^p}\|_{\ell^r}.
$$

When employing parabolic estimates in Besov spaces, it is somehow natural to take the time-Lebesgue norm before performing the summation for computing the Besov norm. So we next introduce the following Besov-Chemin-Lerner space $\widetilde{L}_T^\rho(\dot{B}_{p,r}^s)$ (see\,\cite{chemin1995flot}):
$$
\widetilde{L}_T^\rho(\dot{B}_{p,r}^s)={\Big\{}f\in (0,+\infty)\times\mathcal{Y}^\prime(\mathbb{R}^N):
\|f\|_{\widetilde{L}_T^\rho(\dot{B}_{p,r}^s)}<+\infty{\Big\}},
$$
where
$$
\|f\|_{\widetilde{L}_T^\rho(\dot{B}_{p,r}^s)}:=\bigl{\|}2^{ks}\|\dot{\Delta}_k f(t)\|_{L^\rho(0,T;L^p)}\bigr{\|}_{\ell^r}.
$$
The index $T$ will be omitted if $T=+\infty$ and we shall denote by $\widetilde{\mathcal{C}}_b([0,T]; \dot{B}^s_{p,r})$ the subset of functions of $\widetilde{L}^\infty_T(\dot{B}^s_{p,r})$ which are also continuous from
$[0,T]$ to $\dot{B}^s_{p,r}$.

A direct application of Minkowski's inequality implies that
$$
L_T^\rho(\dot{B}_{p,r}^s)\hookrightarrow \widetilde{L}_T^\rho(\dot{B}_{p,r}^s)\,\,\,{\rm if}\,\,\,r\geq \rho,
\quad\,{\rm and}\quad\,
\widetilde{L}_T^\rho(\dot{B}_{p,r}^s)\hookrightarrow {L}_T^\rho(\dot{B}_{p,r}^s)\,\,\,{\rm if}\,\,\,\rho\geq r.
$$
We will repeatedly use the following Bernstein's inequality throughout the paper:
\begin{lem}\label{le2.1}
{\rm(}see {\rm \cite{chemin1998perfect}}{\rm )} Let $\mathcal{C}$ be an annulus and $\mathcal{B}$ a ball, $1\leq p\leq q\leq +\infty$. Assume that $f\in L^p(\mathbb{R}^N)$, then for any nonnegative integer $k$, there exists constant $C$ independent of $f$, $k$ such that
$$
{\rm supp} \hat{f}\subset\lambda \mathcal{B}\Rightarrow\|D^k f\|_{L^q(\mathbb{R}^N)}:=\sup_{|\alpha|=k}\|\partial^\alpha f\|_{L^q(\mathbb{R}^N)}\leq C^{k+1}\lambda^{k+N(\frac{1}{p}-\frac{1}{q})}\|f\|_{L^p(\mathbb{R}^N)},
$$
\be\nonumber
{\rm supp} \hat{f}\subset\lambda\mathcal{C}\Rightarrow C^{-k-1}\lambda^k\|f\|_{L^p(\mathbb{R}^N)}\leq \|D^k f\|_{L^p(\mathbb{R}^N)}\leq
C^{k+1}\lambda^k\|f\|_{L^p(\mathbb{R}^N)}.
\ee
\end{lem}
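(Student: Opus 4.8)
The plan is to reduce every one of the three estimates to Young's convolution inequality applied to suitably rescaled kernels; the only genuine work is to track the \emph{geometric} dependence $C^{k+1}$ on the differentiation order $k=|\alpha|$, since a naive estimate would produce a factorial. First I would fix, once and for all, a Schwartz function $\chi$ whose Fourier transform $\hat\chi$ equals $1$ on a neighbourhood of the ball $\mathcal B$ and is compactly supported, say in $\{|\xi|\le R\}$. Writing $\chi_\lambda(x)\equ\lambda^N\chi(\lambda x)$, so that $\widehat{\chi_\lambda}(\xi)=\hat\chi(\xi/\lambda)=1$ on $\lambda\mathcal B$, the spectral condition ${\rm supp}\,\hat f\subset\lambda\mathcal B$ yields $f=\chi_\lambda\ast f$, hence $\partial^\alpha f=(\partial^\alpha\chi_\lambda)\ast f$ for every multi-index $\alpha$.

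For the first inequality I would apply Young's inequality with $r$ fixed by $1+\tfrac1q=\tfrac1r+\tfrac1p$, giving $\|\partial^\alpha f\|_{L^q}\le\|\partial^\alpha\chi_\lambda\|_{L^r}\|f\|_{L^p}$; a change of variables produces $\|\partial^\alpha\chi_\lambda\|_{L^r}=\lambda^{k+N(\frac1p-\frac1q)}\|\partial^\alpha\chi\|_{L^r}$, so everything reduces to the bound $\|\partial^\alpha\chi\|_{L^r}\le C^{k+1}$ uniformly in $r\in[1,\infty]$. This geometric bound is where I expect the main difficulty to lie. I would obtain it by estimating $L^\infty$ and $L^1$ separately and interpolating: the $L^\infty$ bound follows from $\|\partial^\alpha\chi\|_{L^\infty}\le\|\xi^\alpha\hat\chi\|_{L^1}\le R^{k}\|\hat\chi\|_{L^1}$, while for $L^1$ I would insert the integrable weight $(1+|x|^2)^{-N}$ and realize $(1+|x|^2)^N\partial^\alpha\chi$ as the inverse transform of $(1-\Delta_\xi)^N(\xi^\alpha\hat\chi)$. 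Expanding by Leibniz, each of the at most $2N$ derivatives falling on $\xi^\alpha$ costs a combinatorial factor no larger than $k^{2N}$ and lowers the homogeneity, so on the fixed support of $\hat\chi$ the whole expression is dominated by $(\text{poly in }k)\cdot R^{k}\le C^{k+1}$. The upper bound in the second assertion is then the special case $q=p$, $r=1$, with $\hat\chi$ now chosen to equal $1$ on the annulus $\mathcal C$ and supported in a fixed annulus away from the origin.

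The remaining point is the lower bound $\lambda^k\|f\|_{L^p}\lesssim\|D^kf\|_{L^p}$ for spectra in $\lambda\mathcal C$, where I would exploit that $|\xi|$ is bounded below on the annulus in order to ``invert'' the Laplacian. Using the multinomial identity $|\xi|^{2k}=\sum_{|\alpha|=k}\binom{k}{\alpha}\xi^{2\alpha}$ together with $\xi^\alpha\hat f=(-i)^k\widehat{\partial^\alpha f}$, and with $\phi$ equal to $1$ on $\mathcal C$, I would write
$$f=(-i)^k\lambda^{-k}\sum_{|\alpha|=k}\binom{k}{\alpha}\,K_\alpha^\lambda\ast\partial^\alpha f,\qquad \widehat{K_\alpha}(\eta)=\frac{\phi(\eta)\,\eta^\alpha}{|\eta|^{2k}},\quad K_\alpha^\lambda(x)=\lambda^N K_\alpha(\lambda x).$$
Young's inequality with $r=1$ then gives $\lambda^k\|f\|_{L^p}\le\bigl(\sum_{|\alpha|=k}\binom{k}{\alpha}\|K_\alpha\|_{L^1}\bigr)\sup_{|\alpha|=k}\|\partial^\alpha f\|_{L^p}$, using the scale invariance $\|K_\alpha^\lambda\|_{L^1}=\|K_\alpha\|_{L^1}$. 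Since $\widehat{K_\alpha}$ is smooth and supported in a fixed annulus on which $\eta^\alpha/|\eta|^{2k}$ has size $\le(b/a^2)^k$ and derivatives up to order $2N$ growing at most polynomially in $k$, the same weighted $L^1$--$L^\infty$ argument yields $\|K_\alpha\|_{L^1}\le C^{k+1}$; combined with $\sum_{|\alpha|=k}\binom{k}{\alpha}=N^k$ this produces a final constant of the form $(C')^{k+1}$.

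In summary, the whole proof is three applications of Young's inequality to rescaled, frequency-localized kernels, and the only delicate book-keeping is to verify that differentiating $\xi^\alpha$ (respectively $\eta^\alpha/|\eta|^{2k}$) a \emph{bounded} number $2N$ of times costs only polynomial-in-$k$ factors on a fixed support, so that the geometric growth $C^{k+1}$ — rather than a factorial — is preserved and the constant stays independent of both $f$ and $k$. I regard that uniform-in-$k$ kernel estimate as the heart of the statement; once it is in place, the scaling computations are routine.
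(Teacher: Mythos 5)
Your proof is correct and follows essentially the same route as the classical proof of this lemma, which the paper itself does not reprove but quotes from \cite{chemin1998perfect} (see also Lemma 2.1 of \cite{bahouri2011fourier}): spectral localization gives $f=\chi_\lambda\ast f$, Young's inequality plus scaling yields the direct estimates, and the reverse inequality is obtained by inverting through the multinomial identity $|\xi|^{2k}=\sum_{|\alpha|=k}\binom{k}{\alpha}\xi^{2\alpha}$ with kernels $\widehat{K_\alpha}(\eta)=\phi(\eta)\eta^\alpha/|\eta|^{2k}$ supported in a fixed annulus. Your bookkeeping is also the right one: bounding $\|K_\alpha\|_{L^1}$ and $\|\partial^\alpha\chi\|_{L^r}$ via the weighted $L^1$--$L^\infty$ argument, and checking that the at most $2N$ $\xi$-derivatives cost only a factor polynomial in $k$ on a fixed compact support, is precisely what preserves the geometric constant $C^{k+1}$ (uniform in $p,q,r$ by log-convexity of the $L^r$ norms) rather than a factorial.
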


More generally, if $v$ satisfies ${\rm Supp}\mathcal{F}v\subset\{\xi\in\mathbb{R}^N: R_1\lambda\leq |\xi|\leq R_2\lambda\}$
for some $0< R_1<R_2$ and $\lambda>0$, then for any smooth homogeneous of degree $m$ function $A$ on $\mathbb{R}^N\backslash\{0\}$ and $1\leq q\leq \infty$, it holds that (see e.g. Lemma 2.2 in \cite{bahouri2011fourier}):
\be\label{2.100}
\|A(D)v\|_{L^q}\lesssim \lambda^m\|v\|_{L^q}.
\ee

The following nonlinear generalization of \eqref{2.100} will be applied (see Lemma 8 in \cite{danchin2010well-posedness}):

\begin{prop}\label{pr2.3}
If ${\rm Supp}\mathcal{F}f\subset\{\xi\in\mathbb{R}^N: R_1\lambda\leq |\xi|\leq R_2\lambda\}$ then there exists $c$ depending only on $N, R_1$ and $R_2$ so that for all $1<p<\infty$,
$$
c\lambda^2\left(\frac{p-1}{p^2}\right)\int_{\mathbb{R}^N}|f|^pdx\leq (p-1)\int_{\mathbb{R}^N}|\nabla f|^2|f|^{p-2}dx
=-\int_{\mathbb{R}^N}\Delta f|f|^{p-2}fdx.
$$
\end{prop}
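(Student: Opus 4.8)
The estimate has two independent parts: the algebraic identity on the right, and the spectral lower bound on the left. I would dispose of the identity first. Because $\operatorname{supp}\widehat f$ lies in an annulus, $f$ is smooth with rapid decay, so integration by parts has no boundary contribution and
$$-\int_{\mathbb{R}^N}\Delta f\,|f|^{p-2}f\,dx=\int_{\mathbb{R}^N}\nabla f\cdot\nabla\!\big(|f|^{p-2}f\big)\,dx.$$
For real-valued $f$ the chain rule gives $\nabla(|f|^{p-2}f)=(p-1)|f|^{p-2}\nabla f$ (checked on $\{f>0\}$ and $\{f<0\}$ separately), which turns the right-hand side into $(p-1)\int|f|^{p-2}|\nabla f|^2\,dx$ and proves the equality. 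When $1<p<2$ the weight $|f|^{p-2}$ is singular at the zeros of $f$; there I would carry out the same computation with $|f|$ replaced by $(|f|^2+\varepsilon^2)^{1/2}$ and let $\varepsilon\to0$ by dominated convergence, the localization of $f$ guaranteeing integrability.

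The heart of the matter is the lower bound, which after cancelling $(p-1)$ reads $\int|f|^{p-2}|\nabla f|^2\,dx\gtrsim p^{-2}\lambda^2\int|f|^p\,dx$. This cannot be seen on the Fourier side, since neither $|f|^p$ nor $|f|^{p-2}|\nabla f|^2$ is spectrally supported in the annulus. The plan is to read it off from the heat flow. Put $u(t)=e^{t\Delta}f$; the multiplier $e^{-t|\xi|^2}$ does not move the support, so $u(t)$ stays localized in $\{R_1\lambda\le|\xi|\le R_2\lambda\}$ for every $t\ge0$, and the $L^p$ energy identity gives $\Phi'(t)=-p(p-1)\int|u|^{p-2}|\nabla u|^2\,dx$ for $\Phi(t):=\|u(t)\|_{L^p}^p$. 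Thus $-\Phi'(0)=p(p-1)\int|f|^{p-2}|\nabla f|^2\,dx$ is, up to the harmless factor $p$, exactly the quantity to be bounded from below, and the prefactor $\frac{p-1}{p^2}$ in the statement appears automatically. On the other hand, writing $e^{t\Delta}f=h_{t,\lambda}*f$ with $h_{t,\lambda}$ the heat kernel truncated to the annulus and estimating $\|h_{t,\lambda}\|_{L^1}$ via Lemma \ref{le2.1} yields the decay bound $\|e^{t\Delta}f\|_{L^p}\le Ce^{-\kappa\lambda^2 t}\|f\|_{L^p}$ with $C$ and $\kappa$ depending only on $N,R_1,R_2$, and crucially not on $p$. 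Reducing the proposition to the differential inequality $-\Phi'(0)\ge\kappa'\lambda^2\Phi(0)$ with $\kappa'$ uniform in $p$ is therefore the goal.

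The main obstacle is precisely this passage from the exponential decay of $\Phi$ to the instantaneous inequality. One cannot merely differentiate the decay bound at $t=0$, because it carries a constant $C\ge1$ and so is slack there. The clean route is to integrate the energy identity over a window $[0,T]$: since the decay constants are $p$-independent, one can fix $T=A\lambda^{-2}$ with $A=A(N,R_1,R_2)$ so that $\Phi(T)\le\tfrac12\Phi(0)$, whence $\tfrac12\Phi(0)\le\Phi(0)-\Phi(T)=p(p-1)\int_0^T\!\int|u(s)|^{p-2}|\nabla u(s)|^2\,dx\,ds$. To convert this into a bound on $-\Phi'(0)$, one must control the dissipation rate $D(s):=\int|u(s)|^{p-2}|\nabla u(s)|^2\,dx$ across the window and compare it with $D(0)$; the delicate point is that this requires a monotonicity/comparability property of $D$ along the flow (equivalently, convexity of $\Phi$), uniform in $p$. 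Once $\int_0^T D(s)\,ds\lesssim T\,D(0)$ is established, combining it with the window estimate gives $D(0)\gtrsim\lambda^2\Phi(0)/(p(p-1))$, and tracking the constants — all independent of $p$ — reproduces the stated inequality with $c=c(N,R_1,R_2)$. I expect this $p$-uniform control of the dissipation along the heat flow to be the only genuinely technical step.
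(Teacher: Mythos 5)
Your treatment of the right-hand identity (integration by parts plus the $\varepsilon$-regularization for $1<p<2$) is fine, and so is the heat-flow setup: the $p$-uniform decay $\|e^{t\Delta}f\|_{L^p}\le Ce^{-\kappa\lambda^2t}\|f\|_{L^p}$ for annulus-supported data, the energy identity $\Phi'(t)=-p(p-1)D(t)$ with $D(t)=\int|u(t)|^{p-2}|\nabla u(t)|^2dx$, and the window estimate $\tfrac12\Phi(0)\le p(p-1)\int_0^T D(s)\,ds$ with $T=A\lambda^{-2}$. But the proof stops exactly where the whole difficulty of the proposition lies: you need $\int_0^T D(s)\,ds\lesssim T\,D(0)$, i.e.\ that the dissipation on the window is comparable to its initial value uniformly in $p$, and you only announce this as ``the only genuinely technical step'' without proving it. This comparability amounts to a convexity property of $t\mapsto\|e^{t\Delta}f\|_{L^p}^p$ along the heat flow, and that is not an available elementary fact. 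Writing $\psi(u)=|u|^p$ and $I(t)=\int\psi''(u)|\nabla u|^2dx=p(p-1)D(t)$, a direct computation along $u_t=\Delta u$ gives
\[
I'(t)=-\int\psi'''(u)\,\Delta u\,|\nabla u|^2\,dx-2\int\psi''(u)(\Delta u)^2\,dx,
\]
whose first term is sign-indefinite for $p\neq2$; controlling it by the good term requires Bakry--\'Emery-type admissibility conditions on $\psi$ which fail for $\psi(u)=|u|^p$ when $p>2$. So the monotonicity you rely on may well be false as stated, and in any case you have reduced the proposition to an unproven claim that is at least as hard as the proposition itself. That is a genuine gap, not a technicality.

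For comparison: the paper does not prove this statement at all --- it quotes Lemma 8 of Danchin's 2010 JDE paper --- and the known proof there is elementary and avoids any flow. Since $\widehat f$ is supported in $\{R_1\lambda\le|\xi|\le R_2\lambda\}$, set $\tilde f:=\lambda^2(-\Delta)^{-1}f$, so that $-\Delta\tilde f=\lambda^2 f$ and, by the $L^1$-kernel bounds behind \eqref{2.100}, $\|\tilde f\|_{L^p}\lesssim\|f\|_{L^p}$ and $\|\nabla\tilde f\|_{L^p}\lesssim\lambda\|f\|_{L^p}$ with constants depending only on $N,R_1,R_2$ (in particular not on $p$). Then, for real $f$,
\[
\lambda^2\int|f|^p\,dx=-\int\Delta\tilde f\,|f|^{p-2}f\,dx=(p-1)\int(\nabla\tilde f\cdot\nabla f)\,|f|^{p-2}\,dx
\le(p-1)\Bigl(\int|\nabla\tilde f|^2|f|^{p-2}dx\Bigr)^{\frac12}\Bigl(\int|\nabla f|^2|f|^{p-2}dx\Bigr)^{\frac12},
\]
and for $p\ge2$ H\"older with exponents $\tfrac p2,\tfrac p{p-2}$ gives $\int|\nabla\tilde f|^2|f|^{p-2}dx\le\|\nabla\tilde f\|_{L^p}^2\|f\|_{L^p}^{p-2}\lesssim\lambda^2\|f\|_{L^p}^p$. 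Combining yields $\int|\nabla f|^2|f|^{p-2}dx\gtrsim\lambda^2(p-1)^{-2}\|f\|_{L^p}^p\ge c\lambda^2p^{-2}\|f\|_{L^p}^p$, which is the claim (the range $1<p<2$ needs a separate, more careful treatment, which is part of what the cited lemma provides). If you want a self-contained proof, this duality-plus-Bernstein argument is the route to take; your heat-flow scheme cannot be closed without first establishing the dissipation comparability, which nothing in your proposal addresses.
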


Let us now state some classical properties for the Besov spaces.
\begin{prop}\label{pr2.1} The following properties hold true:

\medskip
{\rm 1)} Derivation: There exists a universal constant $C$ such that
$$
C^{-1}\|f\|_{\dot{B}_{p,r}^s}\leq \|\nabla f\|_{\dot{B}_{p,r}^{s-1}}\leq C\|f\|_{\dot{B}_{p,r}^s}.
$$

{\rm 2)} Sobolev embedding: If $1\leq p_1\leq p_2\leq\infty$ and $1\leq r_1\leq r_2\leq\infty$, then $\dot{B}_{p_1, r_1}^s\hookrightarrow \dot{B}_{p_2, r_2}^{s-\frac{N}{p_1}+\frac{N}{p_2}}$.

{\rm 3)} Real interpolation: $\|f\|_{\dot{B}_{p,r}^{\theta s_1+(1-\theta)s_2}}\leq \|f\|_{\dot{B}_{p,r}^{s_1}}^{\theta}\|f\|_{\dot{B}_{p,r}^{s_2}}^{1-\theta}$.

{\rm 4)} Algebraic properties: for $s>0$, $\dot{B}_{p,1}^s\cap L^\infty$ is an algebra.

{\rm 5)} Scaling properties:

\quad\quad {\rm(a)} for all $\lambda>0$ and $f\in\dot{B}_{p,1}^s$, we have
$$
\|f(\lambda\cdot)\|_{\dot{B}_{p,1}^s}\approx \lambda^{s-\frac{N}{p}}\|f\|_{\dot{B}_{p,1}^s},
$$

\quad\quad {\rm(b)} for $f=f(t, x)$ in $L^r(0, T; \dot{B}_{p,1}^s)$, we have
$$
\|f(\lambda^a\cdot, \lambda^b\cdot)\|_{L_T^r(\dot{B}_{p,1}^s)}\approx
\lambda^{b(s-\frac{N}{p})-\frac{a}{r}}\|f\|_{L_{\lambda^aT}^r(\dot{B}_{p,1}^s)}.
$$
\end{prop}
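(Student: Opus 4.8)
The plan is to read off each of the five properties directly from the Littlewood--Paley definition $\|f\|_{\dot{B}_{p,r}^s}=\bigl\|2^{ks}\|\dot{\Delta}_k f\|_{L^p}\bigr\|_{\ell^r}$, so that every statement reduces to an estimate on a single dyadic block followed by an elementary inequality on the weighted sequence $\bigl(2^{ks}\|\dot{\Delta}_k f\|_{L^p}\bigr)_{k\in\mathbb{Z}}$. For the derivation estimate (1), I would use that $\dot{\Delta}_k$ commutes with $\nabla$ and that $\widehat{\dot{\Delta}_k f}$ is supported in the annulus $\{|\xi|\approx 2^k\}$; the two-sided Bernstein inequality of Lemma \ref{le2.1} then gives $\|\nabla\dot{\Delta}_k f\|_{L^p}\approx 2^k\|\dot{\Delta}_k f\|_{L^p}$, and multiplying by $2^{k(s-1)}$ before taking $\ell^r$ norms yields both inequalities simultaneously. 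For the Sobolev embedding (2), the first Bernstein inequality supplies the integrability gain $\|\dot{\Delta}_k f\|_{L^{p_2}}\lesssim 2^{kN(1/p_1-1/p_2)}\|\dot{\Delta}_k f\|_{L^{p_1}}$, which absorbs exactly the shift of regularity index, while the summability gain from $r_1\le r_2$ is the trivial embedding $\ell^{r_1}\hookrightarrow\ell^{r_2}$. For the interpolation inequality (3), I would factor
\[
2^{k(\theta s_1+(1-\theta)s_2)}\|\dot{\Delta}_k f\|_{L^p}=\bigl(2^{ks_1}\|\dot{\Delta}_k f\|_{L^p}\bigr)^{\theta}\bigl(2^{ks_2}\|\dot{\Delta}_k f\|_{L^p}\bigr)^{1-\theta}
\]
and apply H\"older's inequality in $\ell^r$ with conjugate exponents $1/\theta$ and $1/(1-\theta)$.

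The algebra property (4) is where the genuine work lies and is the step I expect to be the main obstacle. I would decompose the product by Bony's formula $fg=T_fg+T_gf+R(f,g)$, with paraproduct $T_fg=\sum_k\dot{S}_{k-1}f\,\dot{\Delta}_k g$ and remainder $R(f,g)=\sum_{|\nu|\le1}\sum_k\dot{\Delta}_k f\,\dot{\Delta}_{k+\nu}g$. For the paraproducts, the spectral localization \eqref{1.8} shows that $\dot{\Delta}_j T_fg$ involves only blocks with $k\approx j$, so combining $\|\dot{S}_{k-1}f\|_{L^\infty}\lesssim\|f\|_{L^\infty}$ with H\"older bounds $T_fg$ in $\dot{B}_{p,1}^s$ by $\|f\|_{L^\infty}\|g\|_{\dot{B}_{p,1}^s}$, and symmetrically for $T_gf$; no restriction on $s$ is needed here. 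The remainder is the delicate term: since $\dot{\Delta}_k f\,\dot{\Delta}_{k+\nu}g$ is spectrally supported in a ball of radius $\sim 2^k$, only blocks with $k\gtrsim j$ contribute to $\dot{\Delta}_j R(f,g)$, and summing the corresponding weighted norms produces a geometric series with ratio $2^{(j-k)s}$. It is precisely the hypothesis $s>0$ that makes this series convergent, yielding $\|R(f,g)\|_{\dot{B}_{p,1}^s}\lesssim\|f\|_{L^\infty}\|g\|_{\dot{B}_{p,1}^s}+\|g\|_{L^\infty}\|f\|_{\dot{B}_{p,1}^s}$. Adding the three contributions gives the algebra estimate, and the inclusion $\dot{B}_{p,1}^s\cap L^\infty\hookrightarrow L^\infty$ closes the argument.

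Finally, for the scaling properties (5), I would work on the Fourier side: with $g=f(\lambda\cdot)$ one has $\widehat{g}(\xi)=\lambda^{-N}\widehat{f}(\xi/\lambda)$, so $\dot{\Delta}_k g$ is, up to the dilation $x\mapsto\lambda x$, a frequency projection of $f$ localized near $|\xi|\sim 2^k/\lambda$. When $\lambda=2^m$ this is exactly $(\dot{\Delta}_{k-m}f)(\lambda\cdot)$, while for general $\lambda$ the support meets only a bounded number of standard dyadic rings, which accounts for the $\approx$ rather than an equality. Using the dilation identity $\|h(\lambda\cdot)\|_{L^p}=\lambda^{-N/p}\|h\|_{L^p}$ and re-indexing the $\ell^1$ sum, the weight $2^{ks}$ becomes $\lambda^{s}\,2^{(k-m)s}$ and combines with $\lambda^{-N/p}$ to give the factor $\lambda^{s-N/p}$, which is (a). For the space--time version (b), I would apply (a) pointwise in time to the spatial dilation by $\lambda^b$, producing $\lambda^{b(s-N/p)}$, and then change variables $t\mapsto\lambda^a t$ in the $L^r(0,T)$ norm, which contributes $\lambda^{-a/r}$ and replaces $T$ by $\lambda^a T$; the product of the two factors is the asserted scaling.
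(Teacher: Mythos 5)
Your proposal is correct, and each of the five arguments is the standard one. Note, however, that the paper itself offers no proof of this proposition: it is quoted as a list of classical properties, with the burden carried by the cited references (essentially \cite{bahouri2011fourier}), so there is no ``paper proof'' to compare against --- your write-up simply reconstructs the textbook arguments the authors are implicitly invoking. The two points where such a reconstruction could genuinely go wrong are handled properly in your sketch: in 1) you need the \emph{two-sided} Bernstein inequality on annuli (Lemma \ref{le2.1}) to get the lower bound $2^{k}\|\dot{\Delta}_k f\|_{L^p}\lesssim\|\nabla\dot{\Delta}_k f\|_{L^p}$, not just the gradient bound; and in 4) the hypothesis $s>0$ enters only through the remainder term $R(f,g)$, whose ball (rather than annulus) spectral localization forces the summation over $k\gtrsim j$ that you correctly identify as a geometric series convergent precisely when $s>0$.
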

Next we recall a few nonlinear estimates in Besov spaces which may be obtained
by means of paradifferential calculus. Firstly introduced by  Bony in \cite{bony1981calcul}, the paraproduct between $f$
and $g$ is defined by
$$
T_f g=\sum_{q\in\mathbb{Z}}\dot{S}_{q-1}f\dot{\Delta}_q g,
$$
and the remainder is given by
$$
R(f,g)=\sum_{q\in\mathbb{Z}}\dot{\Delta}_q f\widetilde{\dot{\Delta}}_q g\,\,\,\,{\rm with}\,\,\,\,\widetilde{\dot{\Delta}}_q g:=(\dot{\Delta}_{q-1}+\dot{\Delta}_{q}+\dot{\Delta}_{q+1})g.
$$
We have the following so-called Bony's decomposition:
\be\label{2.3}
fg=T_g f+T_f g+R(f,g).
\ee

The paraproduct $T$ and the remainder $R$ operators satisfy the following continuous properties (see e.g. \cite{bahouri2011fourier}).
 \begin{prop}\label{pr2.2}
Suppose that $s\in\mathbb{R}, \sigma>0,$ and $1\leq p, p_1, p_2, r, r_1, r_2\leq \infty$.  Then we have
 \medskip

{\rm 1)} The paraproduct $T$ is a bilinear, continuous operator from $L^\infty\times\dot{B}_{p,r}^s$ to $\dot{B}_{p,r}^s$, and from $\dot{B}_{\infty, r_1}^{-\sigma}\times\dot{B}_{p,r_2}^s$ to $\dot{B}_{p,r}^{s-\sigma}$ with
$\frac{1}{r}=\min\{1, \frac{1}{r_1}+\frac{1}{r_2}\}$.

\medskip
{\rm 2)} The remainder $R$ is bilinear continuous from $\dot{B}_{p_1,r_1}^{s_1}\times\dot{B}_{p_2,r_2}^{s_2}$ to $\dot{B}_{p,r}^{s_1+s_2}$ with $s_1+s_2>0$, $\frac{1}{p}=\frac{1}{p_1}+\frac{1}{p_2}\leq 1$, and $\frac{1}{r}=\frac{1}{r_1}+\frac{1}{r_2}\leq 1$.

 \end{prop}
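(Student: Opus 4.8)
The plan is to reduce both continuity statements to almost-orthogonal dyadic sums, which are then controlled by Young's convolution inequality and Hölder's inequality at the level of sequences indexed by $j,q\in\mathbb{Z}$. The only genuinely analytic inputs are the uniform $L^p$-boundedness of the blocks $\dot{\Delta}_j$ and $\dot{S}_{q-1}$ (each being convolution against an $L^1$-normalised, scale-invariant kernel, so Young's inequality gives a bound independent of $j,q$), Bernstein's inequality (Lemma \ref{le2.1}), and the spectral localisation \eqref{1.8}; everything else is bookkeeping with geometric series.

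First I would treat the paraproduct. The decisive observation is the \emph{spectral geometry}: since $\dot{S}_{q-1}f$ is frequency-supported in a ball of radius $\lesssim 2^{q-1}$ and $\dot{\Delta}_q g$ in the annulus $\{|\xi|\approx 2^q\}$, the product $\dot{S}_{q-1}f\,\dot{\Delta}_q g$ is supported in a fixed dyadic annulus $2^q\mathcal{C}'$; hence $\dot{\Delta}_j(\dot{S}_{q-1}f\,\dot{\Delta}_q g)=0$ unless $|j-q|\leq N_0$ for a universal $N_0$, so that $\dot{\Delta}_j(T_fg)=\sum_{|j-q|\leq N_0}\dot{\Delta}_j(\dot{S}_{q-1}f\,\dot{\Delta}_q g)$. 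For the first bound I would estimate $\|\dot{\Delta}_j(\dot{S}_{q-1}f\,\dot{\Delta}_q g)\|_{L^p}\lesssim\|\dot{S}_{q-1}f\|_{L^\infty}\|\dot{\Delta}_q g\|_{L^p}\lesssim\|f\|_{L^\infty}\|\dot{\Delta}_q g\|_{L^p}$, multiply by $2^{js}$, split $2^{js}=2^{(j-q)s}2^{qs}$, and take the $\ell^r_j$ norm; the finite sum over $|j-q|\leq N_0$ is a convolution with a bounded sequence, so Young's inequality yields $\|T_fg\|_{\dot{B}^s_{p,r}}\lesssim\|f\|_{L^\infty}\|g\|_{\dot{B}^s_{p,r}}$. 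For the second bound the only change is to estimate $\|\dot{S}_{q-1}f\|_{L^\infty}\leq\sum_{k\leq q-2}\|\dot{\Delta}_kf\|_{L^\infty}=\sum_{k\leq q-2}2^{k\sigma}\bigl(2^{-k\sigma}\|\dot{\Delta}_kf\|_{L^\infty}\bigr)$; here $\sigma>0$ makes the weight $2^{k\sigma}$ summable from below, so this equals $2^{q\sigma}$ times a sequence whose $\ell^{r_1}$-norm is $\lesssim\|f\|_{\dot{B}^{-\sigma}_{\infty,r_1}}$ (again by Young). Inserting this, the factor $2^{q\sigma}$ upgrades the target regularity to $s-\sigma$, and the $\ell^r$-summation closes by Hölder $\|a_qb_q\|_{\ell^r}\leq\|a_q\|_{\ell^{r_1}}\|b_q\|_{\ell^{r_2}}$; when $\tfrac1{r_1}+\tfrac1{r_2}>1$ one falls back on $\ell^{r_1}\cdot\ell^{r_2}\hookrightarrow\ell^1\hookrightarrow\ell^r$, which is exactly the reason for the $\min\{1,\cdot\}$ in the exponent.

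For the remainder the mechanism is identical but the support geometry is reversed. Both $\dot{\Delta}_q f$ and $\widetilde{\dot{\Delta}}_q g$ are frequency-localised near $|\xi|\approx 2^q$, so their product is supported in a \emph{ball} $2^q\mathcal{B}'$ rather than an annulus; consequently $\dot{\Delta}_j(\dot{\Delta}_q f\,\widetilde{\dot{\Delta}}_q g)$ can be nonzero for all $q\geq j-N_0$. I would bound each term by Bernstein and Hölder, $\|\dot{\Delta}_j(\dot{\Delta}_q f\,\widetilde{\dot{\Delta}}_q g)\|_{L^p}\lesssim\|\dot{\Delta}_q f\|_{L^{p_1}}\|\widetilde{\dot{\Delta}}_q g\|_{L^{p_2}}$ with $\tfrac1p=\tfrac1{p_1}+\tfrac1{p_2}$, multiply by $2^{j(s_1+s_2)}$ and write the prefactor as $2^{(j-q)(s_1+s_2)}2^{qs_1}2^{qs_2}$. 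Since the summation now runs over $q\geq j-N_0$, i.e.\ $j-q\leq N_0$, the hypothesis $s_1+s_2>0$ is precisely what makes $\sum_{j-q\leq N_0}2^{(j-q)(s_1+s_2)}$ convergent; Young's inequality in $j$ and Hölder in the pair $(r_1,r_2)$ then give $\|R(f,g)\|_{\dot{B}^{s_1+s_2}_{p,r}}\lesssim\|f\|_{\dot{B}^{s_1}_{p_1,r_1}}\|g\|_{\dot{B}^{s_2}_{p_2,r_2}}$ with $\tfrac1r=\tfrac1{r_1}+\tfrac1{r_2}$, the constraints $\tfrac1p\leq1$ and $\tfrac1r\leq1$ being only what is needed for Hölder and Young to apply.

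The main obstacle—indeed essentially the only conceptual point—is getting the spectral support analysis exactly right, since it is what distinguishes the two operators: the paraproduct term lives in a dyadic annulus (forcing $|j-q|\leq N_0$ and leaving the target regularity $s$ unchanged), while the remainder term lives in a ball (forcing only $q\geq j-N_0$ and summing two regularities into $s_1+s_2$). This same geometry dictates the two distinct positivity conditions, $\sigma>0$ for the second paraproduct estimate and $s_1+s_2>0$ for the remainder: each guarantees convergence of the relevant one-sided geometric series, and without it the corresponding dyadic sum diverges. Once the supports and these summation directions are pinned down, the remaining work is routine application of Bernstein, Hölder and Young.
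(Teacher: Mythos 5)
Your proof is correct and follows essentially the same route as the paper, which does not prove Proposition \ref{pr2.2} itself but cites it from \cite{bahouri2011fourier}: your argument---annulus versus ball spectral localisation forcing $|j-q|\leq N_0$ for $T$ and only $q\geq j-N_0$ for $R$, uniform $L^p$-bounds on the blocks plus H\"older, then Young's inequality on the dyadic sequences with $\sigma>0$ and $s_1+s_2>0$ guaranteeing the one-sided geometric sums converge---is precisely the classical proof in that reference. The only cosmetic remark is that in the remainder estimate, with $\frac{1}{p}=\frac{1}{p_1}+\frac{1}{p_2}$ holding with equality, H\"older together with the uniform $L^p$-boundedness of $\dot{\Delta}_j$ already suffices, so Bernstein's inequality is not actually needed there.
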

 \medbreak
From \eqref{2.3} and Proposition \ref{pr2.2}, we may deduce the following two corollaries concerning the product estimates.
\begin{col}\label{co2.1} {\rm(}\cite{bahouri2011fourier}, \cite{danchin2002zero}{\rm )}
{\rm\,(i)} Let $s>0$ and $1\leq p,r\leq \infty$. Then $\dot{B}_{p,r}^s\cap L^\infty$ is an algebra and
$$
\|uv\|_{\dot{B}_{p,r}^s}\lesssim \|u\|_{L^\infty}\|v\|_{\dot{B}_{p,r}^s}+\|v\|_{L^\infty}\|u\|_{\dot{B}_{p,r}^s}.
$$

{\rm (ii)}\,If $u\in\dot{B}_{p_1,1}^{s_1}$ and $v\in\dot{B}_{p_2,1}^{s_2}$ with $1\leq p_1\leq p_2\leq \infty,~s_1\leq \frac{N}{p_1},~s_2\leq \frac{N}{p_2}$ and $s_1+s_2>0$, then
$uv\in\dot{B}_{p_2,1}^{s_1+s_2-\frac{N}{p_1}}$ and there exists a constant $C$, depending only on $N, s_1, s_2, p_1$ and $p_2$, such that
\be\label{2.1}
\|uv\|_{\dot{B}_{p_2,1}^{s_1+s_2-\frac{N}{p_1}}}\leq C\|u\|_{\dot{B}_{p_1,1}^{s_1}}
\|v\|_{\dot{B}_{p_2,1}^{s_2}}.
\ee
\end{col}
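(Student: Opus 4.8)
The plan is to establish both parts by inserting Bony's decomposition \eqref{2.3}, namely $uv = T_u v + T_v u + R(u,v)$, and then estimating the two paraproducts and the remainder separately, relying on the continuity properties of Proposition \ref{pr2.2} together with the Sobolev embedding of Proposition \ref{pr2.1} (item 2). The only genuine point is to arrange the Lebesgue and summability indices so that all three contributions land in the prescribed target space.

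For part (i), since $s>0$ the three pieces all belong to $\dot{B}_{p,r}^s$. The first paraproduct is controlled directly by the first continuity statement of Proposition \ref{pr2.2}, giving $\|T_u v\|_{\dot{B}_{p,r}^s}\lesssim \|u\|_{L^\infty}\|v\|_{\dot{B}_{p,r}^s}$, and symmetrically $\|T_v u\|_{\dot{B}_{p,r}^s}\lesssim \|v\|_{L^\infty}\|u\|_{\dot{B}_{p,r}^s}$. For the remainder I would use the uniform boundedness of $\dot{\Delta}_k$ on $L^\infty$ to embed $L^\infty\hookrightarrow \dot{B}_{\infty,\infty}^0$, and then apply the remainder estimate of Proposition \ref{pr2.2} (item 2) with the choices $s_1=0$, $s_2=s>0$, $r_1=\infty$, $r_2=r$ (so that $\tfrac1r=0+\tfrac1r\le 1$), yielding $\|R(u,v)\|_{\dot{B}_{p,r}^s}\lesssim \|u\|_{L^\infty}\|v\|_{\dot{B}_{p,r}^s}$. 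Summing the three contributions gives the stated algebra inequality.

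For part (ii) the target regularity is $\dot{B}_{p_2,1}^{s_1+s_2-N/p_1}$, and the work is to steer each piece into that single space. For $T_u v$ I would first push the low-frequency factor into an $L^\infty$-type space by Sobolev embedding, $\dot{B}_{p_1,1}^{s_1}\hookrightarrow \dot{B}_{\infty,1}^{s_1-N/p_1}$; in the generic case $s_1<N/p_1$ the negative-index paraproduct estimate of Proposition \ref{pr2.2} with $\sigma=N/p_1-s_1>0$ places $T_u v$ in $\dot{B}_{p_2,1}^{s_2-\sigma}=\dot{B}_{p_2,1}^{s_1+s_2-N/p_1}$ at once. The term $T_v u$ is handled symmetrically, landing first in $\dot{B}_{p_1,1}^{s_1+s_2-N/p_2}$ and then embedding into $\dot{B}_{p_2,1}^{s_1+s_2-N/p_1}$ via Proposition \ref{pr2.1} (item 2), which is legitimate because $p_1\le p_2$. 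For the remainder I would apply Proposition \ref{pr2.2} (item 2) with $r_1=1$, $r_2=\infty$ and $\|v\|_{\dot{B}_{p_2,\infty}^{s_2}}\le \|v\|_{\dot{B}_{p_2,1}^{s_2}}$, using $s_1+s_2>0$ to place $R(u,v)$ in $\dot{B}_{p,1}^{s_1+s_2}$ with $\tfrac1p=\tfrac1{p_1}+\tfrac1{p_2}$, and then embed $\dot{B}_{p,1}^{s_1+s_2}\hookrightarrow \dot{B}_{p_2,1}^{s_1+s_2-N/p_1}$, noting that $N/p-N/p_2=N/p_1$.

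The hard part will be the index bookkeeping rather than any deep analysis: one must check that all three outputs coincide in the single space $\dot{B}_{p_2,1}^{s_1+s_2-N/p_1}$ and, in particular, separate the generic case from the endpoint cases $s_1=N/p_1$ and $s_2=N/p_2$, where the negative-regularity paraproduct estimate degenerates and one substitutes the $L^\infty$-based statement using $\dot{B}_{p_i,1}^{N/p_i}\hookrightarrow L^\infty$. One must also verify that the constraint $\tfrac1p=\tfrac1{p_1}+\tfrac1{p_2}\le 1$ required to form $\dot{B}_{p,1}^{s_1+s_2}$ in the remainder step holds in the regime at hand; the summability index $1$ throughout is precisely what forces the use of $r=1$ in the paraproduct and remainder bounds.
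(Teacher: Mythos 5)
Your proposal is correct and is essentially the paper's own route: the paper gives no written proof of Corollary \ref{co2.1}, saying only that it follows from Bony's decomposition \eqref{2.3} and the continuity properties of Proposition \ref{pr2.2} (with citations), and your argument — paraproduct bounds, the remainder bound, and Sobolev embeddings to steer all three pieces into $\dot{B}_{p_2,1}^{s_1+s_2-N/p_1}$, with the endpoint cases $s_i=N/p_i$ handled via $\dot{B}_{p_i,1}^{N/p_i}\hookrightarrow L^\infty$ — is exactly that intended derivation. The caveat you flag is genuine: the remainder step does require $\frac{1}{p_1}+\frac{1}{p_2}\le 1$ (under the corollary's bare hypotheses this can fail, e.g. $p_1=p_2=1$, where the sharp condition is $s_1+s_2>N(\frac{1}{p_1}+\frac{1}{p_2}-1)$ rather than $s_1+s_2>0$), but it holds in every use the paper makes of the corollary, since there $p_1,p_2\in\{2,p\}$ with $p$ subject to \eqref{1.3}.
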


\begin{col}\label{co2.2}
Let $\sigma_1$ and $p$ satisfy the conditions as in Theorem \ref{th1.2}, that is, $1-\frac{N}{2}<\sigma_1\leq\frac{2N}{p}-\frac{N}{2}\,(N\geq 2)$ and  $p$ fulfills \eqref{1.3}, then we have
\be\nonumber
\|fg\|_{\dot{B}_{p,\infty}^{-\sigma_1+\frac{N}{p}-\frac{N}{2}+1}}\lesssim\|f\|_{\dot{B}_{p,1}^{\frac{N}{p}}}
\|g\|_{\dot{B}_{p,\infty}^{-\sigma_1+\frac{N}{p}-\frac{N}{2}+1}},
\ee
and
\be\nonumber
\|fg\|_{\dot{B}_{p,\infty}^{-\sigma_1+\frac{2N}{p}-N+1}}\lesssim\|f\|_{\dot{B}_{p,1}^{\frac{N}{p}}}
\|g\|_{\dot{B}_{p,\infty}^{-\sigma_1+\frac{2N}{p}-N+1}}.
\ee
\end{col}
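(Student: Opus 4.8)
The plan is to handle both inequalities at once through Bony's decomposition \eqref{2.3},
$$
fg=T_fg+T_gf+R(f,g),
$$
estimating the three pieces separately. Write $s$ for the common target regularity, so that $s=-\sigma_1+\frac{N}{p}-\frac{N}{2}+1$ for the first estimate and $s=-\sigma_1+\frac{2N}{p}-N+1$ for the second. Letting $\sigma_1$ range over $(1-\frac N2,\frac{2N}{p}-\frac N2]$ shows that $1-\frac Np\le s<\frac Np$ in the first case and $1-\frac N2\le s<\frac{2N}{p}-\frac N2$ in the second. Using $p\ge2$ and $p\le 2N/(N-2)$ from \eqref{1.3}, one checks in both cases the two inequalities $s<\frac Np$ and $\frac Np+s\ge0$; these are exactly the conditions the paraproduct and remainder terms will require.

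For the two paraproducts I would appeal to Proposition \ref{pr2.2}(1). Since $\dot B_{p,1}^{N/p}\hookrightarrow L^\infty$ by Proposition \ref{pr2.1}(2), the continuity of $T$ on $L^\infty\times\dot B_{p,\infty}^s$ gives
$$
\|T_fg\|_{\dot B_{p,\infty}^s}\lesssim\|f\|_{L^\infty}\|g\|_{\dot B_{p,\infty}^s}\lesssim\|f\|_{\dot B_{p,1}^{N/p}}\|g\|_{\dot B_{p,\infty}^s},
$$
valid for every $s\in\mathbb R$. For $T_gf$ I would first embed $g\in\dot B_{p,\infty}^s\hookrightarrow\dot B_{\infty,\infty}^{s-N/p}$ (a space of negative order precisely because $s<\frac Np$) and then use the second mapping property of $T$ in Proposition \ref{pr2.2}(1) with $\sigma=\frac Np-s>0$; this lands $T_gf$ in $\dot B_{p,1}^s\hookrightarrow\dot B_{p,\infty}^s$ with the desired bound. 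Both arguments are insensitive to the sign of $s$, so they also cover the (possibly negative) index of the second estimate.

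The delicate piece is the remainder $R(f,g)$, which I expect to be the main obstacle. A direct application of Proposition \ref{pr2.2}(2) would demand $\frac Np+s>0$ strictly; this is comfortable for the first estimate, where $\frac Np+s\ge1$, but for the second estimate the endpoint $\sigma_1=\frac{2N}{p}-\frac N2$ combined with $p=2N/(N-2)$ forces $\frac Np+s=0$, so the classical estimate is unavailable. I would therefore bound $R(f,g)$ by hand. Writing $\dot\Delta_jR(f,g)=\sum_{q\ge j-N_0}\dot\Delta_j\bigl(\dot\Delta_qf\,\widetilde{\dot\Delta}_qg\bigr)$ (the spectral support of each summand forcing $q\gtrsim j$), Hölder's inequality in $L^{p/2}$ followed by the Bernstein gain $2^{jN/p}$ from $L^{p/2}$ to $L^p$ (Lemma \ref{le2.1}, using $p\ge2$) leads to
$$
2^{js}\|\dot\Delta_jR(f,g)\|_{L^p}\lesssim\|g\|_{\dot B_{p,\infty}^s}\sum_{q\ge j-N_0}2^{(j-q)(\frac Np+s)}c_q,\qquad c_q:=2^{qN/p}\|\dot\Delta_qf\|_{L^p}.
$$
Because $\frac Np+s\ge0$ and $j-q\le N_0$ throughout the sum, the factor $2^{(j-q)(N/p+s)}$ is bounded by a fixed constant, so the whole sum is dominated by $\sum_qc_q=\|f\|_{\dot B_{p,1}^{N/p}}$. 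This controls the right-hand side uniformly in $j$ even when $\frac Np+s=0$---exactly the configuration the classical remainder estimate misses---and taking the supremum over $j$ closes the bound. The essential point, and the source of the ``non-classical'' character of the estimate, is that $f$ is measured in $\dot B_{p,1}^{N/p}$, so that $(c_q)_q\in\ell^1$; a mere $\dot B_{p,\infty}^{N/p}$ control of $f$ would make the sum diverge at the endpoint.
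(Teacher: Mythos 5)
Your proof is correct, and it follows the route the paper itself indicates for Corollary \ref{co2.2}: the paper offers no detailed argument, saying only that the two estimates ``may be deduced from \eqref{2.3} and Proposition \ref{pr2.2}.'' Your treatment of the two paraproducts is exactly that: $T_fg$ via the $L^\infty\times\dot B_{p,\infty}^s\to\dot B_{p,\infty}^s$ continuity together with $\dot B_{p,1}^{N/p}\hookrightarrow L^\infty$, and $T_gf$ via the embedding $\dot B_{p,\infty}^s\hookrightarrow\dot B_{\infty,\infty}^{s-N/p}$ (legitimate since you verify $s<N/p$ strictly in both cases) and the second mapping property of $T$. Where you genuinely add something is the remainder. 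You correctly observe that Proposition \ref{pr2.2}(2) as stated requires $s_1+s_2>0$ \emph{strictly}, whereas for the second inequality the admissible endpoint $\sigma_1=\frac{2N}{p}-\frac N2$, $p=\frac{2N}{N-2}$ (which does occur under \eqref{1.3} once $N\geq4$, e.g.\ $N=4$, $p=4$) gives exactly $\frac Np+s=0$, so a literal application of the cited proposition fails there. Your hand-made estimate --- H\"older in $L^{p/2}$, Bernstein back to $L^p$, and summation of $c_q=2^{qN/p}\|\dot\Delta_qf\|_{L^p}$ over $q\gtrsim j$ --- closes this gap precisely because $f$ is measured in $\dot B_{p,1}^{N/p}$ (third index $1$), so the sum converges even when the exponent $\frac Np+s$ vanishes; this is indeed the ``non-classical'' feature the paper alludes to but never spells out. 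An equivalent fix would be to invoke the endpoint version of the remainder estimate (allowing $s_1+s_2=0$ into $\dot B^{0}_{p/2,\infty}$ when $\frac1{r_1}+\frac1{r_2}\geq1$, as in the book of Bahouri--Chemin--Danchin) followed by the Sobolev embedding, but your direct computation is self-contained and makes the mechanism transparent.
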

Here, the estimates in Corollary \ref{co2.1} are classical,  and the  non-classical estimates in Corollary \ref{co2.2}  are used to establish the evolution of Besov norms at low frequencies in our paper.

We also need the following  composition lemma (see \cite{bahouri2011fourier,danchin2000global, runst1996sobolev}).
\begin{prop}\label{pra.4}

Let $F:\mathbb{R}\rightarrow \mathbb{R}$ be smooth with $F(0)=0$. For all $1\leq p,r\leq \infty$ and $s>0$, it holds that $F(u)\in \dot{B}_{p,r}^s\cap L^\infty$ for $u\in\dot{B}_{p,r}^s\cap L^\infty$, and
$$
\|F(u)\|_{\dot{B}_{p,r}^s}\leq C\|u\|_{\dot{B}_{p,r}^s}
$$
with $C$ depending only on $\|u\|_{L^\infty}$, $F^\prime$ (and higher derivatives), $s, p$ and $N$.

In the case $s>-\min(\frac{N}{p}, \frac{N}{p^\prime})$, then $u\in\dot{B}_{p,r}^s\cap\dot{B}_{p,1}^{\frac{N}{p}}$ implies that
$F(u)\in\dot{B}_{p,r}^s\cap\dot{B}_{p,1}^{\frac{N}{p}}$, and
$$
\|F(u)\|_{\dot{B}_{p,r}^s}\leq C(1+\|u\|_{\dot{B}_{p,1}^{\frac{N}{p}}})\|u\|_{\dot{B}_{p,r}^s},
$$
where $\frac{1}{p}+\frac{1}{p^\prime}=1$.
\end{prop}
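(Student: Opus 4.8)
The plan is to reduce the nonlinear quantity $F(u)$ to an infinite sum of \emph{linear} pieces built from the dyadic blocks of $u$, and then to estimate each piece by elementary Bernstein inequalities. Since $F(0)=0$ and $\dot{S}_q u\to 0$ in $L^\infty$ as $q\to-\infty$ while $\dot{S}_q u\to u$ as $q\to+\infty$, the telescoping identity
\begin{equation}\nonumber
F(u)=\sum_{q\in\mathbb{Z}}\bigl(F(\dot{S}_{q+1}u)-F(\dot{S}_q u)\bigr)
\end{equation}
holds in $\mathcal{Y}'(\mathbb{R}^N)$, and writing $\dot{S}_{q+1}u=\dot{S}_q u+\dot{\Delta}_q u$ the mean value theorem gives the linearization
\begin{equation}\nonumber
F(u)=\sum_{q\in\mathbb{Z}}m_q\,\dot{\Delta}_q u,\qquad m_q:=\int_0^1 F'\bigl(\dot{S}_q u+t\,\dot{\Delta}_q u\bigr)\,dt .
\end{equation}
The bound $\|m_q\|_{L^\infty}\le\sup_{|v|\le\|u\|_{L^\infty}}|F'(v)|=:C_0$ is immediate and already yields $\|F(u)\|_{L^\infty}\lesssim\|u\|_{L^\infty}$; the real work lies in the Besov norm.

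For the first estimate ($s>0$) I would apply $\dot{\Delta}_j$ to each summand and split the sum at $q=j$. When $q\ge j$ I simply use the $L^p$-boundedness of $\dot{\Delta}_j$ together with $\|m_q\|_{L^\infty}\le C_0$, so that $\|\dot{\Delta}_j(m_q\dot{\Delta}_q u)\|_{L^p}\lesssim C_0\|\dot{\Delta}_q u\|_{L^p}$; after multiplying by $2^{js}$ this produces the convolution kernel $2^{(j-q)s}$, which is $\ell^1$ precisely because $s>0$. When $q<j$ the output frequency $2^j$ must come from $m_q$, so I gain derivatives: using the homogeneous multiplier bound \eqref{2.100} in the form $\|\dot{\Delta}_j g\|_{L^p}\lesssim 2^{-jM}\|\nabla^M g\|_{L^p}$ for an integer $M>s$, then the Leibniz rule together with Bernstein's inequality (Lemma \ref{le2.1}), I obtain $\|\nabla^M(m_q\dot{\Delta}_q u)\|_{L^p}\lesssim 2^{qM}\|\dot{\Delta}_q u\|_{L^p}$ and hence $\|\dot{\Delta}_j(m_q\dot{\Delta}_q u)\|_{L^p}\lesssim 2^{(q-j)M}\|\dot{\Delta}_q u\|_{L^p}$, whose weighted kernel $2^{(j-q)(s-M)}$ is summable since $M>s$. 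Young's inequality for series then delivers $\|F(u)\|_{\dot{B}_{p,r}^s}\lesssim C(\|u\|_{L^\infty})\|u\|_{\dot{B}_{p,r}^s}$, with $C$ depending on the first $M+1\approx[s]+2$ derivatives of $F$ through $\sup_{|v|\le\|u\|_{L^\infty}}|F^{(k)}(v)|$.

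For the second estimate, where $s$ may be negative but $s>-\min(N/p,N/p')$ and the extra bound $u\in\dot{B}_{p,1}^{N/p}$ is available, I would paralinearize. Bony's decomposition \eqref{2.3} lets one write $F(u)=T_{F'(u)}u+\bigl(F(u)-T_{F'(u)}u\bigr)$, the leading paraproduct being controlled by Proposition \ref{pr2.2}(1), namely $\|T_{F'(u)}u\|_{\dot{B}_{p,r}^s}\lesssim\|F'(u)\|_{L^\infty}\|u\|_{\dot{B}_{p,r}^s}$. The factor $F'(u)$ is handled by applying the first part to $\widetilde F:=F'-F'(0)$ (legitimate since $N/p>0$), which places $F'(u)-F'(0)$ in $\dot{B}_{p,1}^{N/p}$ with norm $\lesssim(1+\|u\|_{L^\infty})\|u\|_{\dot{B}_{p,1}^{N/p}}$; combined with the embedding $\dot{B}_{p,1}^{N/p}\hookrightarrow L^\infty$ this produces the advertised factor $1+\|u\|_{\dot{B}_{p,1}^{N/p}}$. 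The paralinearization remainder is smoother and is absorbed using the $\dot{B}_{p,1}^{N/p}$ control together with the remainder estimate of Proposition \ref{pr2.2}(2); it is exactly the hypothesis $s>-\min(N/p,N/p')$, through the dual exponent $p'$, that guarantees the relevant regularity sums are strictly positive so that the proposition applies.

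The main obstacle is the high-frequency interaction in the first part, namely the uniform control of $\|\nabla^M m_q\|_{L^\infty}$. Because $m_q$ is a nonlinear function of the non-localized field $\dot{S}_q u$, each derivative must be distributed by the Fa\`a di Bruno formula, producing factors $\|\nabla^k\dot{S}_q u\|_{L^\infty}\lesssim 2^{qk}\|u\|_{L^\infty}$ obtained by summing the Bernstein bounds $2^{q'k}\|\dot{\Delta}_{q'}u\|_{L^\infty}$ over $q'<q$, where the geometric series is dominated by its top frequency $2^{qk}$. Keeping track of these factors is what forces $F$ to possess $[s]+2$ bounded derivatives and what makes every constant depend only on $\|u\|_{L^\infty}$ and on $F$. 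For the second part the delicate point is instead the sharp threshold: one must verify that, under $s>-\min(N/p,N/p')$, each remainder term generated by Bony's decomposition lands in a Besov space with a strictly positive smoothness index, so that the continuity of $R$ in Proposition \ref{pr2.2}(2) remains valid right down to the stated lower bound on $s$.
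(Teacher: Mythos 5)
A preliminary remark: the paper does not prove Proposition \ref{pra.4} at all --- it quotes it from \cite{bahouri2011fourier,danchin2000global,runst1996sobolev} --- so your attempt can only be measured against the standard literature proof. For the first assertion ($s>0$) your argument \emph{is} that proof (Meyer's first linearization): the telescoping identity, the bound $\|m_q\|_{L^\infty}\leq C(\|u\|_{L^\infty})$, the split of $\sum_q\dot{\Delta}_j(m_q\dot{\Delta}_qu)$ at $q=j$, the Fa\`a di Bruno control $\|\nabla^k m_q\|_{L^\infty}\lesssim 2^{qk}$ obtained by summing Bernstein bounds over $q'<q$, and Young's inequality for the convolution kernels $2^{(j-q)s}\mathbf{1}_{q\geq j}$ and $2^{(j-q)(s-M)}\mathbf{1}_{q<j}$. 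Apart from cosmetic points (the sup of $F'$ should be taken over $|v|\leq C\|u\|_{L^\infty}$ rather than $|v|\leq\|u\|_{L^\infty}$, and $\dot{S}_qu\to 0$ in $L^\infty$ as $q\to-\infty$ is a convention on the realization in $\mathcal{Y}'$ rather than a fact), this part is complete and correct.

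The second part, however, contains a genuine gap. You write $F(u)=T_{F'(u)}u+\bigl(F(u)-T_{F'(u)}u\bigr)$ and claim the remainder ``is absorbed using the $\dot{B}_{p,1}^{N/p}$ control together with the remainder estimate of Proposition \ref{pr2.2}(2).'' But Bony's decomposition \eqref{2.3} applies to \emph{products} $fg$, and $F(u)$ is not a product; the difference $F(u)-T_{F'(u)}u$ is not of the form $T_gf+R(f,g)$ for any pair $(f,g)$, so Proposition \ref{pr2.2}(2) has nothing to act on. Controlling this paralinearization remainder at negative regularity is precisely the content to be proved, and your part-one machinery fails there: for $q\geq j$ the kernel $2^{(j-q)s}$ is no longer summable once $s\leq 0$. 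The standard repair is to exploit $F(0)=0$ to \emph{factor} $F(u)=u\,G(u)$ with $G(u)=\int_0^1F'(\tau u)\,d\tau$, so that Bony genuinely applies: $F(u)=T_{G(u)}u+T_u\widetilde{G}(u)+R(u,\widetilde{G}(u))$ with $\widetilde{G}=G-G(0)$ (the constant $G(0)$ is annihilated by every $\dot{\Delta}_q$ in the last two terms). Then $\|T_{G(u)}u\|_{\dot{B}_{p,r}^s}\lesssim\|G(u)\|_{L^\infty}\|u\|_{\dot{B}_{p,r}^s}$ for every $s\in\mathbb{R}$; your part one gives $\widetilde{G}(u)\in\dot{B}_{p,1}^{N/p}$ with norm $\lesssim C(\|u\|_{L^\infty})\|u\|_{\dot{B}_{p,1}^{N/p}}$; for $s<N/p$ the paraproduct $T_u\widetilde{G}(u)$ is handled by $u\in\dot{B}_{\infty,r}^{s-N/p}$ and Proposition \ref{pr2.2}(1); and the remainder is estimated by Proposition \ref{pr2.2}(2) in two regimes: for $p\geq 2$, $R:\dot{B}_{p,r}^s\times\dot{B}_{p,1}^{N/p}\to\dot{B}_{p/2,r}^{s+N/p}\hookrightarrow\dot{B}_{p,r}^s$, which requires $s+N/p>0$ (note $2/p\leq1$ is needed for the H\"older condition, which is why $p\geq2$ enters); for $p\leq 2$, embed $\dot{B}_{p,1}^{N/p}\hookrightarrow\dot{B}_{p',1}^{N/p'}$ and use $R:\dot{B}_{p,r}^s\times\dot{B}_{p',1}^{N/p'}\to\dot{B}_{1,r}^{s+N/p'}\hookrightarrow\dot{B}_{p,r}^s$, which requires $s+N/p'>0$. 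This is exactly where the threshold $s>-\min(N/p,N/p')$ comes from --- so your intuition about the role of the dual exponent was right, but without the factorization $F(u)=u\,G(u)$ there is no decomposition on which the quoted proposition can operate, and the claimed estimate of the remainder is asserted rather than proved.
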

The following commutator estimates (see \cite{danchin2016optimal}) have been employed in the high-frequency estimate for proving Theorem \ref{th1.2}.
\begin{prop}\label{pra.5}
Let $1\leq p, p_1\leq \infty$ and
\be\label{A4}
-\min\Big{\{}\frac{N}{p_1}, \frac{N}{p^\prime}\Big{\}}<\sigma\leq 1+
\min\Big{\{}\frac{N}{p}, \frac{N}{p_1}\Big{\}}.
\ee
There exists a constant $C>0$ depending only on $\sigma$ such that
for all $j\in\mathbb{Z}$ and $i\in \{1,\cdots, N\}$, we have
\be\label{A5}
\|[v\cdot\nabla,\partial_i{\dot{\Delta}_j}]a\|_{L^p}\leq Cc_j 2^{-j(\sigma-1)}
\|\nabla v\|_{\dot{B}_{p_1,1}^{\frac{N}{p_1}}}\|\nabla a\|
_{\dot{B}_{p,1}^{\sigma-1}},
\ee
where the commutator $[\cdot,\cdot]$ is defined by $[f,g]=fg-gf$, and
 $(c_j)_{j\in\mathbb{Z}}$ denotes a sequence such that $\|(c_j)\|_{\ell^1}\leq 1$ and $\frac{1}{p^\prime}+\frac{1}{p}=1$.
\end{prop}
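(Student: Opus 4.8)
The plan is to reduce the commutator to a sum of commutators between multiplication operators and the Fourier multiplier $\partial_i\dot{\Delta}_j$, and then to treat it with Bony's decomposition \eqref{2.3}. Since $\partial_k$ commutes with $\partial_i\dot{\Delta}_j$, writing $v\cdot\nabla=\sum_{k}v^k\partial_k$ gives the identity $[v\cdot\nabla,\partial_i\dot{\Delta}_j]a=\sum_{k}[v^k,\partial_i\dot{\Delta}_j]\partial_k a$, where $v^k$ now acts by multiplication. By the derivation property in Proposition \ref{pr2.1}, the right-hand side of \eqref{A5} may equivalently be read through $\|\nabla a\|_{\dot{B}_{p,1}^{\sigma-1}}\approx\|a\|_{\dot{B}_{p,1}^\sigma}$. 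I would record that $\partial_i\dot{\Delta}_j$ is convolution against the kernel $K_j(x)=2^{j(N+1)}(\partial_i h)(2^j x)$ with $h=\mathcal{F}^{-1}\varphi$, so that for any $g$,
\[
[v^k,\partial_i\dot{\Delta}_j]g(x)=\int_{\mathbb{R}^N}K_j(x-y)\bigl(v^k(x)-v^k(y)\bigr)g(y)\,dy.
\]

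Next, applying Bony's decomposition to $v^k\,\partial_k a$ and to $v^k\,\partial_i\dot{\Delta}_j(\partial_k a)$ and subtracting, the commutator splits into three pieces: the paraproduct commutator $[\partial_i\dot{\Delta}_j,T_{v^k}]\partial_k a$, the off-diagonal paraproduct $\partial_i\dot{\Delta}_j T_{\partial_k a}v^k-T_{\partial_i\dot{\Delta}_j\partial_k a}v^k$, and the remainder piece $\partial_i\dot{\Delta}_j R(v^k,\partial_k a)-R(v^k,\partial_i\dot{\Delta}_j\partial_k a)$. The first (main) piece carries the essential cancellation: by spectral support only the terms with $|q-j|$ bounded survive, and on each I would use the kernel identity above with $v^k$ replaced by the smooth low-frequency factor $\dot{S}_{q-1}v^k$ together with a first-order Taylor expansion $\dot{S}_{q-1}v^k(x)-\dot{S}_{q-1}v^k(y)=(x-y)\cdot\int_0^1\nabla\dot{S}_{q-1}v^k(y+\tau(x-y))\,d\tau$. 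The factor $(x-y)$ converts $K_j$ into $2^{jN}\widetilde{g}(2^j\cdot)$ with $\widetilde{g}(z)=z\,g(z)\in L^1$, i.e.\ it gains the decisive $2^{-j}$ that compensates the derivative in $\partial_i$; Young's inequality and the embedding $\dot{B}_{p_1,1}^{N/p_1}\hookrightarrow L^\infty$ then give the bound $\|\nabla\dot{S}_{q-1}v^k\|_{L^\infty}\|\dot{\Delta}_q\nabla a\|_{L^p}$, which after summation over $q\sim j$ yields $c_j2^{-j(\sigma-1)}\|\nabla v\|_{\dot{B}_{p_1,1}^{N/p_1}}\|\nabla a\|_{\dot{B}_{p,1}^{\sigma-1}}$ with no restriction on $\sigma$.

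The remaining two pieces carry no delicate cancellation, and I would estimate them directly by Hölder and Bernstein's inequality (Lemma \ref{le2.1}), distributing the two integrabilities so that $\nabla v$ is placed in an $L^\infty$- or $L^{p_1}$-type norm and $\nabla a$ in an $L^p$- or $L^\infty$-type norm, according to whichever keeps the frequency series convergent. It is precisely the convergence of these series that forces the range \eqref{A4}: for the off-diagonal paraproduct the low-frequency factor $\dot{S}_{q-1}\nabla a$ must be summed, which (after using $\dot{B}_{p_1,1}^{N/p_1}\hookrightarrow L^\infty$ and, when $p\neq p_1$, a Bernstein transfer between $L^{p_1}$ and $L^p$) converges only under the upper constraint $\sigma\leq 1+\min\{N/p,N/p_1\}$; for the remainder, Proposition \ref{pr2.2}(2) demands a positive total regularity, and optimizing how the integrability is shared in the high-frequency sum $\sum_{q\gtrsim j}$ produces the lower constraint $\sigma>-\min\{N/p_1,N/p^\prime\}$, the dual exponent $p^\prime$ entering through a duality estimate needed when $\sigma$ is negative.

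The main obstacle, and the only genuinely technical point, is this last step of bookkeeping. The decomposition itself is routine and the Taylor trick for the principal term is standard; the difficulty lies in choosing, term by term and in each frequency regime, the right Hölder pairing and Bernstein embedding so that the output lands in $L^p$ despite the two distinct integrability indices $p$ and $p_1$, each factor is measured in a norm controlled by $\|\nabla v\|_{\dot{B}_{p_1,1}^{N/p_1}}$ or $\|\nabla a\|_{\dot{B}_{p,1}^{\sigma-1}}$, and all the resulting dyadic series assemble into a single sequence $(c_j)$ with $\|(c_j)\|_{\ell^1}\leq1$ and the correct weight $2^{-j(\sigma-1)}$. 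Tracking these constraints simultaneously is exactly what pins down the sharp endpoints in \eqref{A4}.
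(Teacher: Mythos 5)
The paper does not actually prove Proposition \ref{pra.5}: it is imported verbatim from \cite{danchin2016optimal}, so there is no in-paper argument to compare against. Your sketch follows the standard proof of such commutator estimates (the one underlying the cited reference; cf.\ also the commutator lemmas in \cite{bahouri2011fourier}): reduction to $\sum_k[v^k,\partial_i\dot{\Delta}_j]\partial_k a$, Bony splitting into the paraproduct commutator plus the two non-cancellative pieces, the first-order Taylor/kernel argument producing the crucial $2^{-j}$ gain for the main term, and H\"older--Bernstein bookkeeping for the rest; moreover you assign the two constraints in \eqref{A4} to the correct sources (upper bound from the paraproduct-type pieces, lower bound from the remainder pieces), so the plan is sound and matches the canonical route. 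Two small corrections to the way you describe the bookkeeping: the exponent $p^\prime$ enters not through a duality argument but simply through the H\"older pairing $L^{p^\prime}\times L^{p}\to L^{1}$ followed by Bernstein's $L^{1}\to L^{p}$ gain $2^{jN/p^\prime}$ for $\dot{\Delta}_j$, which is the pairing one is forced to use in the remainder term when $p_1<p^\prime$; and the asymmetry of \eqref{A4} (non-strict upper bound, strict lower bound) is a real feature your argument must reproduce --- at $\sigma=1+\min\{N/p,N/p_1\}$ the low-frequency sums still converge because the $\ell^1$ summation in $\|\nabla a\|_{\dot{B}_{p,1}^{\sigma-1}}$ controls the exponent-zero geometric series, whereas at $\sigma=-\min\{N/p_1,N/p^\prime\}$ the high-frequency tail $\sum_{q\gtrsim j}$ only yields a bounded, not summable, sequence $(c_j)$, so strictness there cannot be dispensed with.
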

Finally, we list the optimal regularity estimates for the heat equation (see e.g. \cite{bahouri2011fourier}).
\begin{prop}\label{pra.6}
Let $\sigma\in\mathbb{R},\,\, (p,r)\in [1,\infty]^2$ and $1\leq \rho_2\leq \rho_1\leq \infty$. Let $u$
satisfy
\be\label{A6}
\left\{\begin{array}{ll}\medskip\D
\partial_t u-\mu \Delta u=f,\\ \D
u|_{t=0}=u_0.
\end{array}
\right.
\ee
Then for all $T>0$, the following a prior estimate is satisfied:
\be\label{A7}
\mu^{\frac{1}{\rho_1}}\|u\|_{\widetilde{L}_T^{\rho_1}(\dot{B}_{p,r}^{\sigma+\frac{2}{\rho_1}})}
\lesssim \|u_0\|_{\dot{B}_{p,r}^\sigma}+\mu^{\frac{1}{\rho_2}-1}
\|f\|_{\widetilde{L}_T^{\rho_2}(\dot{B}_{p,r}^{\sigma-2+\frac{2}{\rho_2}})}.
\ee
\end{prop}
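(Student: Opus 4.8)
The plan is to reduce \eqref{A7} to a frequency-localized estimate in time via the Littlewood--Paley decomposition, the heart of the matter being a uniform exponential decay estimate for the heat semigroup acting on dyadic blocks. First I would apply $\dot{\Delta}_j$ to \eqref{A6}; writing $u_j\equ\dot{\Delta}_j u$ and $f_j\equ\dot{\Delta}_j f$, each block solves $\partial_t u_j-\mu\Delta u_j=f_j$ with data $u_j|_{t=0}=\dot{\Delta}_j u_0$, so Duhamel's formula gives
\be\nonumber
u_j(t)=e^{\mu t\Delta}\dot{\Delta}_j u_0+\int_0^t e^{\mu(t-s)\Delta}f_j(s)\,ds.
\ee

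Next, the key step: establish that for functions spectrally supported in $2^j\mathcal{C}$ there exist universal constants $c,C>0$ with
\be\nonumber
\|e^{\mu t\Delta}\dot{\Delta}_j v\|_{L^p}\leq Ce^{-c\mu 2^{2j}t}\|\dot{\Delta}_j v\|_{L^p},\qquad t\geq 0,\ 1\leq p\leq\infty.
\ee
Two routes are available. For $1<p<\infty$ one multiplies the localized homogeneous equation by $|u_j|^{p-2}u_j$, integrates in space, and invokes Proposition \ref{pr2.3} with $\lambda=2^j$ to bound $-\int\Delta u_j\,|u_j|^{p-2}u_j$ below by a constant times $2^{2j}\|u_j\|_{L^p}^p$; a Gronwall argument then yields the decay. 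For the full range $1\leq p\leq\infty$, the clean route is to write $e^{\mu t\Delta}\dot{\Delta}_j$ as convolution against the kernel associated with the multiplier $e^{-\mu t|\xi|^2}\varphi(2^{-j}\xi)$; rescaling together with the argument behind \eqref{2.100} shows this kernel has $L^1$ norm $\lesssim e^{-c\mu 2^{2j}t}$ uniformly in $j$, after which Young's inequality gives the block estimate in every $L^p$.

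Inserting the block estimate into Duhamel and taking the $L^{\rho_1}(0,T)$ norm in time, the homogeneous part contributes $\|e^{-c\mu 2^{2j}\cdot}\|_{L^{\rho_1}(0,T)}\lesssim(\mu 2^{2j})^{-1/\rho_1}\|\dot{\Delta}_j u_0\|_{L^p}$, while the Duhamel term is a time-convolution of $e^{-c\mu 2^{2j}t}$ with $\|f_j\|_{L^p}$; applying Young's inequality with $1+\tfrac{1}{\rho_1}=\tfrac{1}{\rho_2}+\tfrac{1}{\rho_3}$ (so that $\rho_3\in[1,\infty]$ since $\rho_2\leq\rho_1$) and $\|e^{-c\mu 2^{2j}\cdot}\|_{L^{\rho_3}(0,T)}\lesssim(\mu 2^{2j})^{-1/\rho_3}$ bounds it by $(\mu 2^{2j})^{-(1+1/\rho_1-1/\rho_2)}\|f_j\|_{L^{\rho_2}(0,T;L^p)}$. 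Finally I would multiply by $2^{j(\sigma+2/\rho_1)}$ and take the $\ell^r(\mathbb{Z})$ norm: in the first term the powers combine as $2^{j(\sigma+2/\rho_1)}(\mu 2^{2j})^{-1/\rho_1}=\mu^{-1/\rho_1}2^{j\sigma}$, and in the second as $2^{j(\sigma+2/\rho_1)}(\mu 2^{2j})^{-(1+1/\rho_1-1/\rho_2)}=\mu^{-(1+1/\rho_1-1/\rho_2)}2^{j(\sigma-2+2/\rho_2)}$; recognizing the resulting $\ell^r$ sums as $\|u_0\|_{\dot{B}_{p,r}^\sigma}$ and $\|f\|_{\widetilde{L}_T^{\rho_2}(\dot{B}_{p,r}^{\sigma-2+2/\rho_2})}$ and multiplying through by $\mu^{1/\rho_1}$ yields exactly \eqref{A7}, since $\tfrac{1}{\rho_1}-\bigl(1+\tfrac{1}{\rho_1}-\tfrac{1}{\rho_2}\bigr)=\tfrac{1}{\rho_2}-1$.

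The main obstacle is the uniform decay estimate for the localized semigroup together with the bookkeeping of the $\mu$- and $2^j$-powers through Young's inequality in the Chemin--Lerner (time-norm-before-sum) setting: it is precisely because the $L^{\rho_1}$-in-time norm is taken at each frequency \emph{before} the $\ell^r$ summation that the convolution estimate can be applied blockwise and the exponents align correctly. Care is also needed at the endpoints $\rho_i\in\{1,\infty\}$ and $p\in\{1,\infty\}$, where the elementary $L^{\rho_3}(0,T)$ bounds on $e^{-c\mu 2^{2j}t}$ and the kernel route, rather than the energy method, must be used.
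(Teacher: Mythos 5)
Your proposal is correct: the frequency-localized decay estimate $\|e^{\mu t\Delta}\dot{\Delta}_j v\|_{L^p}\leq Ce^{-c\mu 2^{2j}t}\|\dot{\Delta}_j v\|_{L^p}$, followed by Duhamel, Young's inequality in time with $1+\tfrac{1}{\rho_1}=\tfrac{1}{\rho_2}+\tfrac{1}{\rho_3}$, and blockwise summation in $\ell^r$, is exactly the standard argument, and your exponent bookkeeping (including the final $\mu^{\frac{1}{\rho_2}-1}$ factor) checks out. The paper itself gives no proof of Proposition \ref{pra.6} — it simply quotes the result from the reference \cite{bahouri2011fourier} — and your argument is precisely the proof given there, including the correct observation that the kernel (rescaling) route, rather than the $L^p$ energy method via Proposition \ref{pr2.3}, is needed to get constants uniform at the endpoints $p\in\{1,\infty\}$.
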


\section{Low-frequency and high-frequency estimates}\label{s:4}
In this section, we derive the low-frequency and high-frequency estimates to system \eqref{1.5}. Based on this, a Lyapunov-type inequality for energy norms could be deduced  in next section.
\subsection{Low-frequency estimates}
\begin{lemma}\label{le1}
Let $k_0$ be some integer. Then it holds that for all $t\geq 0$,
\begin{equation}\label{3.100}
\frac{d}{dt}\|(a,\bu,\bH)\|_{\dot{B}_{2,1}^{\frac{N}{2}-1}}^\ell
+\|(a,\bu,\bH)\|_{\dot{B}_{2,1}^{\frac{N}{2}+1}}^\ell\lesssim \|(f,{\bf g},{\bf m})\|_{\dot{B}_{2,1}^{\frac{N}{2}-1}}^\ell
\end{equation}
where
\begin{equation}\nonumber
\|z\|_{\dot{B}_{2,1}^s}^\ell\equ \sum_{k\leq k_0}2^{ks}\|\dot{\Delta}_kz\|_{L^2}\,\,\,{\rm for}\,\,\,s\in\mathbb{R}.
\end{equation}
\end{lemma}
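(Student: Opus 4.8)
The plan is to prove \eqref{3.100} by a frequency-localized Lyapunov functional argument in which $(f,{\bf g},{\bf m})$ are regarded as given source terms, so that only the linear structure of \eqref{1.5} is used. First I would apply $\dot{\Delta}_k$ to the first three equations of \eqref{1.5} and perform the $L^2$ energy estimate, testing the localized equations against $\dot{\Delta}_k a$, $\dot{\Delta}_k\bu$ and $\dot{\Delta}_k\bH$ respectively and summing. The crucial structural point is the cancellation of the magnetic coupling: after integrating by parts the contributions of $\nabla(I\cdot\dot{\Delta}_k\bH)$ and $({\rm div}\,\dot{\Delta}_k\bu)I$ cancel, and likewise those of $-I\cdot\nabla\dot{\Delta}_k\bH$ and $-I\cdot\nabla\dot{\Delta}_k\bu$ cancel, while the acoustic coupling $\langle\nabla\dot{\Delta}_k a,\dot{\Delta}_k\bu\rangle=-\langle\dot{\Delta}_k a,{\rm div}\,\dot{\Delta}_k\bu\rangle$ cancels against the ${\rm div}\,\bu$ term from the density equation. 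Using the coercivity of the Lam\'e operator, $-\langle\mathcal{A}\dot{\Delta}_k\bu,\dot{\Delta}_k\bu\rangle\gtrsim\|\nabla\dot{\Delta}_k\bu\|_{L^2}^2$ (which holds since $\mu^\ast>0$ and $\lambda^\ast+2\mu^\ast=1>0$), together with $-\langle\Delta\dot{\Delta}_k\bH,\dot{\Delta}_k\bH\rangle=\|\nabla\dot{\Delta}_k\bH\|_{L^2}^2$, this identity yields dissipation for $\bu$ and $\bH$ but none for $a$.

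To recover the missing dissipation of $a$, I would introduce the cross functional $\mathcal{I}_k:=\langle\nabla\dot{\Delta}_k a,\dot{\Delta}_k\bu\rangle$ and compute $\frac{d}{dt}\mathcal{I}_k$ from the localized equations. The term $\langle\nabla\dot{\Delta}_k a,-\nabla\dot{\Delta}_k a\rangle=-\|\nabla\dot{\Delta}_k a\|_{L^2}^2$ supplies exactly the sought-after $a$-dissipation. All remaining contributions are harmless at low frequency: the piece $\langle\nabla\partial_t\dot{\Delta}_k a,\dot{\Delta}_k\bu\rangle$ produces $\|{\rm div}\,\dot{\Delta}_k\bu\|_{L^2}^2$ and a source term $-\langle\dot{\Delta}_k f,{\rm div}\,\dot{\Delta}_k\bu\rangle$, whereas the viscous term $\langle\nabla\dot{\Delta}_k a,\mathcal{A}\dot{\Delta}_k\bu\rangle$ and the magnetic cross terms $\langle\nabla\dot{\Delta}_k a,-\nabla(I\cdot\dot{\Delta}_k\bH)+I\cdot\nabla\dot{\Delta}_k\bH\rangle$ are all controlled, via Bernstein's inequality (Lemma \ref{le2.1}) and Young's inequality, by $\varepsilon\|\nabla\dot{\Delta}_k a\|_{L^2}^2+C(\|\nabla\dot{\Delta}_k\bu\|_{L^2}^2+\|\nabla\dot{\Delta}_k\bH\|_{L^2}^2)$, the extra factor $2^{2k}\lesssim 2^{2k_0}\lesssim 1$ arising from the additional derivatives being absorbed into these gradient norms.

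I would then form the Lyapunov functional $\mathcal{L}_k^2:=\|\dot{\Delta}_k a\|_{L^2}^2+\|\dot{\Delta}_k\bu\|_{L^2}^2+\|\dot{\Delta}_k\bH\|_{L^2}^2+\eta\,\mathcal{I}_k$ with $\eta>0$ small. Since $|\mathcal{I}_k|\lesssim\|\nabla\dot{\Delta}_k a\|_{L^2}\|\dot{\Delta}_k\bu\|_{L^2}\lesssim 2^k\|\dot{\Delta}_k a\|_{L^2}\|\dot{\Delta}_k\bu\|_{L^2}$, smallness of $\eta$ guarantees $\mathcal{L}_k^2\approx\|\dot{\Delta}_k a\|_{L^2}^2+\|\dot{\Delta}_k\bu\|_{L^2}^2+\|\dot{\Delta}_k\bH\|_{L^2}^2$. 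Combining the energy identity with $\eta\,\frac{d}{dt}\mathcal{I}_k$, absorbing the gradient remainders into the genuine $\bu,\bH$ dissipation and keeping $-\tfrac{\eta}{2}\|\nabla\dot{\Delta}_k a\|_{L^2}^2$, and using Bernstein once more to replace the gradient dissipation by $2^{2k}\mathcal{L}_k^2$, I arrive at $\frac12\frac{d}{dt}\mathcal{L}_k^2+c\,2^{2k}\mathcal{L}_k^2\lesssim\mathcal{L}_k\,\|(\dot{\Delta}_k f,\dot{\Delta}_k{\bf g},\dot{\Delta}_k{\bf m})\|_{L^2}$. Dividing by $\mathcal{L}_k$ (after the usual regularization to avoid a vanishing denominator) gives $\frac{d}{dt}\mathcal{L}_k+c\,2^{2k}\mathcal{L}_k\lesssim\|(\dot{\Delta}_k f,\dot{\Delta}_k{\bf g},\dot{\Delta}_k{\bf m})\|_{L^2}$. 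Multiplying by $2^{k(\frac{N}{2}-1)}$, summing over $k\le k_0$, and using $\mathcal{L}_k\approx\|(\dot{\Delta}_k a,\dot{\Delta}_k\bu,\dot{\Delta}_k\bH)\|_{L^2}$ together with $2^{2k}\cdot 2^{k(\frac{N}{2}-1)}=2^{k(\frac{N}{2}+1)}$ yields exactly \eqref{3.100}.

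I expect the main obstacle to be the bookkeeping of the cross-functional computation: one must confirm the magnetic cancellations in the basic energy identity and then verify that every term generated by $\frac{d}{dt}\mathcal{I}_k$ — in particular the third-order viscous term and the magnetic off-diagonal terms — is controllable purely by the available $\bu,\bH$ gradient dissipations, with the derivative loss compensated by the uniform low-frequency bound $2^k\lesssim 2^{k_0}$. Once $\eta$ is tuned so that $\mathcal{L}_k^2$ is simultaneously equivalent to the energy and strictly dissipative in $a$, the weighted summation is routine.
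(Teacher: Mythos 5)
Your proposal is correct and rests on the same core mechanism as the paper's proof — a frequency-localized energy identity supplemented by a cross functional that produces the missing dissipation for $a$, followed by the low-frequency equivalence of the Lyapunov functional, division by it, and weighted summation over $k\le k_0$ — but your implementation takes a genuinely more direct route. The paper (following Shi--Xu) first rewrites \eqref{1.5} in terms of the auxiliary unknowns $\omega=\Lambda^{-1}{\rm div}\,\bu$, ${\bf\Omega}=\Lambda^{-1}{\rm curl}\,\bu$ and ${\bf E}=\Lambda^{-1}{\rm curl}\,\bH$ (system \eqref{3.1}); each reformulated equation then carries a plain Laplacian, so no coercivity of $\mathcal{A}$ is needed, but one must track the identity $({\rm curl}(\omega_k I),{\bf E}_k)=2(I\cdot{\rm div}{\bf E}_k,\omega_k)$ — whence the $\tfrac12$ weights in the paper's $\mathcal{J}_k$ — and, at the end, return from $(F,G,{\bf L},{\bf M})$ to $(f,{\bf g},{\bf m})$ via the $L^2$-boundedness of the degree-zero multipliers $\Lambda^{-1}{\rm div}$ and $\Lambda^{-1}{\rm curl}$ on annuli. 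You instead work directly on $(a,\bu,\bH)$, which requires exactly the two structural facts you single out: the Lam\'e coercivity $-\langle\mathcal{A}v,v\rangle\gtrsim\|\nabla v\|_{L^2}^2$ (valid here since $\mu^\ast>0$, $2\mu^\ast+\lambda^\ast=1$ and $\|{\rm div}\,v\|_{L^2}\le\|\nabla v\|_{L^2}$) and the exact antisymmetric cancellation of the four magnetic coupling terms; both check out. In fact the two correctors are the same object: since $\Lambda\omega_k={\rm div}\,\bu_k$, the paper's term $-2\gamma(a_k,\Lambda\omega_k)$ equals $2\gamma\langle\nabla a_k,\bu_k\rangle$, i.e. a multiple of your $\mathcal{I}_k$, and the extra $\gamma\|\Lambda a_k\|_{L^2}^2$ in $\mathcal{J}_k$ is harmless at low frequencies. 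Your absorptions ($2^{2k}\lesssim 2^{2k_0}$ for $k\le k_0$, and $\eta$ small relative to $2^{-k_0}$) are precisely what underlies \eqref{3.103} and \eqref{3.9}, so your argument closes; the trade-off is that your route is more self-contained (no Hodge-type reformulation, no multiplier step), while the paper's scalar reformulation makes the diagonal dissipative structure of each component visible at a glance.
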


\begin{proof} The proof of Lemma \ref{le1} is similar to that in \cite{shi2019global}.
Set
\be\label{3.104}
\omega=\Lambda^{-1}{\rm div}\bu,\,\, {\bf \Omega}=\Lambda^{-1}{\rm curl}\bu, \,\,\,{\rm and}\,\,\,
{\bf E}=\Lambda^{-1}{\rm curl}\bH,
\ee
where ${\rm curl{\bf v}}\equ (\partial_jv_i-\partial_iv_j)_{ij}$
is $N\times N$ matrix and Let $\Lambda^sz\equ \mathcal{F}^{-1}(|\xi|^s\mathcal{F}z)\,(s\in\mathbb{R})$. So system \eqref{1.5} becomes
\begin{equation}\label{3.1}
\left\{
\begin{array}{ll}
\partial_t a+\Lambda\omega=F,\\[1ex]
\partial_t\omega-\Delta\omega-\Lambda a-I\cdot{\rm div}{\bf E}=G,\\[1ex]
\partial_t{\bf \Omega}-\mu^\ast\Delta {\bf \Omega}-I\cdot\nabla {\bf E}={\bf L},\\[1ex]
\partial_t{\bf E}-\Delta{\bf E}+{\rm curl}(\omega I)-I\cdot\nabla{\bf \Omega}={\bf M},\\[1ex]
\bu=-\Lambda^{-1}\nabla\omega+\Lambda^{-1}{\rm div}{\bf\Omega},\,\,\bH=\Lambda^{-1}{\rm div}{\bf E},\,\,
{\rm div}\bH=0,
\end{array}
\right.
\end{equation}
where
\begin{equation}\label{3.101}
F=f, \,\,\,G=\Lambda^{-1}{\rm div}{\bf g},\,\,\,{\bf L}=\Lambda^{-1}{\rm curl}{\bf g},\,\,\,{\bf M}=\Lambda^{-1}{\rm curl}{\bf m}.
\end{equation}
Applying the operator $\dot{\Delta}_k$ to \eqref{3.1} and denoting $n_k\equ \dot{\Delta}_kn$, one has for all $k\in \mathbb{Z}$,
\begin{equation}\label{3.2}
\left\{
\begin{array}{ll}
\partial_t a_k+\Lambda\omega_k=F_k,\\[1ex]
\partial_t\omega_k-\Delta\omega_k-\Lambda a_k-I\cdot{\rm div}{\bf E}_k=G_k,\\[1ex]
\partial_t{\bf \Omega}_k-\mu^\ast\Delta {\bf \Omega}_k-I\cdot\nabla {\bf E}_k={\bf L}_k,\\[1ex]
\partial_t{\bf E}_k-\Delta{\bf E}_k+{\rm curl}(\omega_k I)-I\cdot\nabla{\bf \Omega_k}={\bf M}_k.
\end{array}
\right.
\end{equation}

Taking the $L^2$ scalar product of \eqref{3.2}$_1$ with $a_k$, \eqref{3.2}$_2$ with $\omega_k$,
 \eqref{3.2}$_3$ with ${\bf\Omega}_k$,  and \eqref{3.2}$_4$ with ${\bf E}_k$, we derive that
\begin{equation}\label{3.3}
\frac{1}{2}\frac{d}{dt}\|a_k\|_{L^2}^2+(\Lambda\omega_k,a_k)=(F_k, a_k),
\end{equation}
\begin{equation}\label{3.4}
\frac{1}{2}\frac{d}{dt}\|\omega_k\|_{L^2}^2+\|\Lambda \omega_k\|_{L^2}^2-(\Lambda a_k,\omega_k)
-(I\cdot{\rm div}{\bf E}_k, \omega_k)=(G_k, \omega_k),
\end{equation}
\begin{equation}\label{3.5}
\frac{1}{2}\frac{d}{dt}\|{\bf \Omega}_k\|_{L^2}^2+\mu^\ast\|\Lambda {\bf \Omega}_k\|_{L^2}^2
-(I\cdot\nabla{\bf E}_k, {\bf\Omega}_k)=({\bf L}_k, {\bf \Omega}_k),
\end{equation}
\begin{equation}\label{3.6}
\frac{1}{2}\frac{d}{dt}\|{\bf E}_k\|_{L^2}^2+\|\Lambda E_k\|_{L^2}^2+({\rm curl}(\omega_kI), {\bf E}_k)
-(I\cdot\nabla{\bf \Omega}_k, {\bf E}_k)=({\bf M}_k, {\bf E}_k).
\end{equation}
Noticing that
$$
(\Lambda \omega_k, a_k)=(\Lambda a_k, \omega_k),\,\,({\rm curl}(\omega_kI), {\bf E}_k)
=2(I\cdot{\rm div}{\bf E}_k, \omega_k)\,\, {\rm and}\,\,(I\cdot\nabla{\bf E}_k, {\bf\Omega}_k)
=-(I\cdot\nabla{\bf \Omega}_k, {\bf E}_k).
$$
Combing \eqref{3.3} to \eqref{3.6}, we have
\begin{equation}\label{3.7}
\begin{split}
&\quad\frac{1}{2}\frac{d}{dt}(\|a_k\|_{L^2}^2+\|\omega_k\|_{L^2}^2+\frac{1}{2}\|{\bf \Omega}_k\|_
{L^2}^2+\frac{1}{2}\|{\bf E}_k\|_{L^2}^2)+\|\Lambda\omega_k\|_{L^2}^2+\frac{1}{2}\mu^\ast\|\Lambda{\bf \Omega}_k\|_
{L^2}^2+\frac{1}{2}\|\Lambda{\bf E}_k\|_
{L^2}^2\\[1ex]
&=(F_k, a_k)+(G_k, \omega_k)+\frac{1}{2}({\bf L}_k, {\bf \Omega}_k)+\frac{1}{2}({\bf M}_k, {\bf E}_k).
\end{split}
\end{equation}

Taking the $L^2$ scalar product of \eqref{3.2}$_1$ with $\Lambda\omega_k$, \eqref{3.2}$_2$ with $\Lambda a_k$, and \eqref{3.2}$_1$ with $\Lambda^2 a_k$, we obtain, respectively, that
\begin{equation}\nonumber
\left.
\begin{array}{ll}
(\partial_t a_k,\Lambda\omega_k)+\|\Lambda\omega_k\|_{L^2}^2=(F_k,\Lambda\omega_k),\\[1ex]
(\partial_t\omega_k, \Lambda a_k)+(\Lambda_k^2\omega_k, \Lambda a_k)-\|\Lambda a_k\|_{L^2}^2-(I\cdot{\rm div}{\bf E}_k,\Lambda a_k)=(G_k,\Lambda a_k),\\[1ex]\displaystyle
\frac{1}{2}\frac{d}{dt}\|\Lambda a_k\|_{L^2}^2+(\Lambda \omega_k, \Lambda^2a_k)=(\Lambda F_k, \Lambda a_k)
\end{array}
\right.
\end{equation}
which yields
\begin{equation}\label{3.8}
\begin{split}
&\quad\frac{1}{2}\frac{d}{dt}\left(\|\Lambda a_k\|_{L^2}^2-2(a_k,\Lambda\omega_k)\right)+\|\Lambda a_k\|_
{L^2}^2-\|\Lambda\omega_k\|_{L^2}^2+(I\cdot{\rm div}{\bf E}_k, \Lambda a_k)\\[1ex]
&=(\Lambda F_k, \Lambda a_k)-(F_k, \Lambda \omega_k)-(G_k, \Lambda a_k).
\end{split}
\end{equation}
Set
$$
\mathcal{J}_k^2(t)\equ \|a_k\|_{L^2}^2+\|\omega_k\|_{L^2}^2+\frac{1}{2}\|{\bf \Omega}_k\|_{L^2}^2
+\frac{1}{2}\|{\bf E}_k\|_{L^2}^2+\gamma\left(\|\Lambda a_k\|_{L^2}^2-2(a_k, \Lambda \omega_k)\right)
$$
for some $\gamma>0$, we get from \eqref{3.7} and \eqref{3.8} that
\begin{equation}\label{3.105}
\begin{split}
&\quad\frac{1}{2}\frac{d}{dt}\mathcal{J}_k^2(t)+(1-\gamma)\|\Lambda\omega_k\|_{L^2}^2
+\frac{1}{2}\mu^\ast\|\Lambda{\bf \Omega}_k\|_{L^2}^2+\frac{1}{2}\|\Lambda{\bf E}_k\|_{L^2}^2
+\gamma\left(\|\Lambda a_k\|_{L^2}^2+(I\cdot{\rm div}{\bf E}_k, \Lambda a_k)\right)\\[1ex]
&=(F_k, a_k)+(G_k, \omega_k)+\frac{1}{2}({\bf L}_k, {\bf \Omega}_k)+\frac{1}{2}({\bf M}_k, {\bf E}_k)
+\gamma\Big[(\Lambda F_k, \Lambda a_k)-(F_k, \Lambda \omega_k)-(G_k, \Lambda a_k)\Big].
\end{split}
\end{equation}
It follows from Young's inequality that for $k\leq k_0$
\begin{equation}\label{3.103}
\mathcal{J}_k^2(t)\thickapprox\|(a_k, \Lambda a_k, \omega_k, {\bf \Omega}_k, {\bf E}_k)\|_{L^2}^2
\thickapprox\|(a_k, \omega_k, {\bf \Omega}_k, {\bf E}_k)\|_{L^2}^2.
\end{equation}
Consequently, in the low-frequency case,  we get from \eqref{3.105} that
\begin{equation}\label{3.9}
\frac{1}{2}\frac{d}{dt}\mathcal{J}_k^2+2^{2k}\mathcal{J}_k^2\lesssim\|(F_k, G_k, {\bf L}_k, {\bf M}_k)\|_{L^2}\mathcal{J}_k,
\end{equation}
which implies that
\begin{equation}\label{3.10}
\frac{d}{dt}\mathcal{J}_k+2^{2k}\mathcal{J}_k\lesssim\|(F_k, G_k, {\bf L}_k, {\bf M}_k)\|_{L^2}
\end{equation}
for $k\leq k_0$. Therefore, multiplying both sides by $2^{k(N/2-1)}$, summing up on $k\leq k_0$ and using \eqref{3.101} yield \eqref{3.100}.
\end{proof}
\subsection{High-frequency estimates} In the high-frequency regime, the term ${\rm div}(au)$ would cause a loss of one derivative as there is no smoothing effect for $a$. To get around  this difficulty, as in \cite{haspot2011existence}, we introduce the effective velocity
\begin{equation}\label{3.102}
\bw\equ \nabla (-\Delta)^{-1}(a-{\rm div}\bu).
\end{equation}
\begin{lemma}\label{le2}
Let $k_0$ be chosen suitably large. Then it holds that for all $t\geq 0$,
\begin{equation}\label{3.11}
\begin{split}
&\quad\frac{d}{dt}\|(\nabla a,\bu,\bH)\|_{\dot{B}_{p,1}^{\frac{N}{p}-1}}^h+\Big(\|\nabla a\|_{\dot{B}_{p,1}^{\frac{N}{p}-1}}^h
+\|(\bu,\bH)\|_{\dot{B}_{p,1}^{\frac{N}{p}+1}}^h\Big)\\[1ex]
&\lesssim \|f\|_{\dot{B}_{p,1}^{\frac{N}{p}-2}}^h+\|({\bf g},{\bf m})\|_{\dot{B}_{p,1}^{\frac{N}{p}-1}}^h
+\|\nabla\bu\|_{\dot{B}_{p,1}^{\frac{N}{p}}}\|a\|_{\dot{B}_{p,1}^{\frac{N}{p}}},
\end{split}
\end{equation}
where
\begin{equation}\nonumber
\|z\|_{\dot{B}_{2,1}^s}^h\equ \sum_{k\geq k_0+1}2^{ks}\|\dot{\Delta}_kz\|_{L^2}\,\,\,{\rm for}\,\,\,s\in\mathbb{R}.
\end{equation}
\end{lemma}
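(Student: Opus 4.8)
The plan is to follow the effective-velocity method of Haspot and Danchin--Xu, adapted to the magnetic coupling. First I would apply $\dot{\Delta}_k$ to \eqref{1.5} for $k\geq k_0+1$ and set $a_k=\dot{\Delta}_k a$, etc. The structural obstruction is that the mass equation $\partial_t a+{\rm div}\,\bu=f$ carries \emph{no} smoothing on $a$, while the momentum equation couples $a$ to $\bu$ through $\nabla a$; a naive $L^p$ estimate would lose one derivative on the density. To cure this I introduce the effective velocity $\bw$ of \eqref{3.102}. Since $\nu^\ast=2\mu^\ast+\lambda^\ast=1$, one has ${\rm div}\,\mathcal{A}\bu=\Delta\,{\rm div}\,\bu$, and a direct computation gives the identity ${\rm div}\,\bw={\rm div}\,\bu-a$. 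Feeding this into the mass equation produces a \emph{damped} equation for the density, $\partial_t a+a=f-{\rm div}\,\bw$, while $\bw$ itself solves a \emph{heat} equation of the schematic form $\partial_t\bw-\Delta\bw=\bw+\nabla(-\Delta)^{-1}(f-a)+\mathcal{Q}{\bf g}+(\text{order-one magnetic terms})$. The key point is that $a$ reappears in the $\bw$-source only through $\nabla(-\Delta)^{-1}a$, i.e. two derivatives lower, so at high frequency the feedback is weaker by a factor $O(2^{-2k})$ and the loop is not circular.

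Next come the $L^p$ energy estimates. I would split $\bu=\mathcal{P}\bu+\mathcal{Q}\bu$: the solenoidal part $\mathcal{P}\bu$ and the field $\bH$ satisfy heat equations, whereas $\mathcal{Q}\bu$ is recovered from $(a,\bw)$ through ${\rm div}\,\bu=a+{\rm div}\,\bw$. For each parabolic unknown ($\bw$, $\mathcal{P}\bu$, $\bH$) I multiply the localized equation by $|\cdot|^{p-2}(\cdot)$, integrate, and invoke Proposition \ref{pr2.3} to turn the dissipative term into the coercive lower bound $\sim 2^{2k}\|\cdot\|_{L^p}$, which delivers the smoothing norm $\|(\bu,\bH)\|_{\dot{B}_{p,1}^{\frac{N}{p}+1}}^h$. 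For $\nabla a$ the damped equation gives $\frac{d}{dt}\|\nabla a_k\|_{L^p}+\|\nabla a_k\|_{L^p}\lesssim\cdots$, hence the term $\|\nabla a\|_{\dot{B}_{p,1}^{\frac{N}{p}-1}}^h$. The only dangerous source is $f=-{\rm div}(a\bu)$ in the density equation, where a crude bound again loses a derivative; instead I write ${\rm div}(a\bu)=\bu\cdot\nabla a+a\,{\rm div}\,\bu$ and apply the commutator estimate of Proposition \ref{pra.5} (with $v=\bu$, $\sigma=\frac{N}{p}$) together with an integration by parts on the low--high interaction, which produces exactly the bilinear remainder $\|\nabla\bu\|_{\dot{B}_{p,1}^{\frac{N}{p}}}\|a\|_{\dot{B}_{p,1}^{\frac{N}{p}}}$ with no loss of regularity; the term $a\,{\rm div}\,\bu$ is handled by the algebra property of $\dot B_{p,1}^{N/p}$. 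In the $\bw$-equation, by contrast, $f$ enters only through $\nabla(-\Delta)^{-1}f$, so estimating $\bw$ at its base level $\dot{B}_{p,1}^{\frac{N}{p}-1}$ makes $f$ appear at the reduced regularity $\dot{B}_{p,1}^{\frac{N}{p}-2}^{\,}$, accounting for the first source term; the data ${\bf g},{\bf m}$ enter the momentum, magnetic and $\bw$ equations at their natural level $\dot{B}_{p,1}^{\frac{N}{p}-1}$.

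Finally I would close the system. Every MHD interaction ($\nabla(I\cdot\bH)$, $I\cdot\nabla\bH$, $({\rm div}\,\bu)I$, $I\cdot\nabla\bu$) is of order one, hence strictly subordinate to the order-two parabolic smoothing; choosing $k_0$ \emph{suitably large} (as in the statement) lets me absorb them, along with the $O(2^{-2k})$ density feedback in the $\bw$-source, into the dissipative left-hand side. Combining the localized inequalities with a small coupling parameter $\varepsilon$ — so that the $\varepsilon\|\bw\|_{\dot{B}_{p,1}^{\frac{N}{p}+1}}$ coming from the $a$-equation's right-hand side is absorbed by the full $\bw$-dissipation — I multiply by the weights $2^{k(\frac{N}{p}-1)}$ and sum over $k\geq k_0+1$, using the high-frequency equivalence $\|\bw\|_{\dot{B}_{p,1}^{s}}^h\approx\|(a,{\rm div}\,\bu)\|_{\dot{B}_{p,1}^{s-1}}^h$ to pass back to the unknown $(\nabla a,\bu,\bH)$, which yields \eqref{3.11}. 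The main obstacle is not any individual estimate but the structural arrangement: designing the effective velocity so that the density loses no derivative while the purely $L^p$ (non-Hilbertian) energy argument still closes, with the genuine $a$--$\bu$ coupling diagonalized into a damped and a parabolic mode, and the magnetic couplings demoted to lower order by the largeness of $k_0$.
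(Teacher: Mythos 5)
Your proposal follows essentially the same route as the paper's own proof: the same effective velocity $\bw=\nabla(-\Delta)^{-1}(a-{\rm div}\,\bu)$, the same diagonalization into heat equations for $(\mathcal{P}\bu,\bH,\bw)$ plus a damped transport equation for $a$, the same $L^p$ energy estimates via Proposition \ref{pr2.3}, the commutator estimate of Proposition \ref{pra.5} for the convection term $\bu\cdot\nabla a$ together with the algebra/product estimate for $a\,{\rm div}\,\bu$, and absorption of the order-one couplings by taking $k_0$ large and a small coupling parameter before summing with weights $2^{k(\frac{N}{p}-1)}$. The only cosmetic discrepancy is at the recovery step, where the paper uses the identity $\bu=\bw-\nabla(-\Delta)^{-1}a+\mathcal{P}\bu$ rather than your stated two-sided norm equivalence for $\bw$ (only the one-sided bound is needed, and it holds), so the argument is correct and matches the paper.
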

\begin{proof}
Let $\mathcal{P}\equ {\rm Id}+\nabla(-\Delta)^{-1}{\rm div}$ be the Leray projector onto divergence-free vector fields, and
$\bw$ be defined in \eqref{3.102}. Then from system \eqref{1.5}, we get that $\mathcal{P}\bu, \bH$ and $\bw$ satisfy a heat equation, and
$a$ satisfies a damped transport equation as follows.
\begin{equation}\label{3.12}
\left\{
\begin{array}{ll}
\partial_t\mathcal{P}\bu-\mu^\ast\Delta\mathcal{P}\bu=\mathcal{P}{\bf g}+I\cdot\nabla\bH,\\[1ex]
\partial_t\bH-\Delta\bH={\bf m}-({\rm div}\bw)I+I\cdot\nabla\bw-aI-I\cdot\nabla^2(-\Delta)^{-1}a+I\cdot\nabla\mathcal{P}\bu,\\[1ex]
\partial_t\bw-\Delta\bw=\nabla(-\Delta)^{-1}(f-{\rm div}{\bf g})+\bw-(-\Delta)^{-1}\nabla a-\nabla(I\cdot\bH),\\[1ex]
\partial_t a+a=-{\rm div}(a\bu)-{\rm div}\bw.
\end{array}
\right.
\end{equation}
Applying $\dot{\Delta}_k$ to \eqref{3.12}$_1$ yields for all $k\in\mathbb{Z}$,
$$
\partial_t\mathcal{P}\bu_k-\mu^\ast\Delta\mathcal{P}\bu_k=\mathcal{P}{\bf g}_k+I\cdot\nabla\bH_k.
$$
Then, multiplying each component of the above equation by $|(\mathcal{P}u_k)^i|^{p-2}(\mathcal{P}u_k)^i$ and integrating
over $\mathbb{R}^N$ gives for $i=1,2,\cdots,N$,
\be\nonumber
\begin{split}
&\quad\frac{1}{p}\frac{d}{dt}\|\mathcal{P}u_k^i\|_{L^p}^p-\mu^\ast
\int_{\mathbb{R}^N}\Delta(\mathcal{P}u_k)^i|(\mathcal{P}u_k)^i|^{p-2}(\mathcal{P}u_k)^idx\\[1ex]
&=\int_{\mathbb{R}^N}|(\mathcal{P}u_k)^i|^{p-2}(\mathcal{P}u_k)^i(\mathcal{P}g_k^i+I_j\partial_j H_k^i)dx.
\end{split}
\ee
Applying Proposition \ref{pr2.3} and summing on $i=1,2,\cdots,N$, we get for some constant $c_p$ depending only on $p$ that
\be\nonumber
\frac{1}{p}\frac{d}{dt}\|\mathcal{P}\bu_k\|_{L^p}^p+c_p\mu^\ast2^{2k}\|\mathcal{P}\bu_k\|_{L^p}^p\leq
(\|\mathcal{P}{\bf g}_k\|_{L^p}+C2^k\|\bH_k\|_{L^p})\|\mathcal{P}\bu_k\|_{L^p}^{p-1}
\ee
which leads to
\be\label{3.13}
\frac{d}{dt}\|\mathcal{P}\bu_k\|_{L^p}+c_p\mu^\ast2^{2k}\|\mathcal{P}\bu_k\|_{L^p}\leq
\|\mathcal{P}{\bf g}_k\|_{L^p}+C2^k\|\bH_k\|_{L^p}.
\ee
On the other hand, from \eqref{3.12}$_2$ and \eqref{3.12}$_3$, we argue exactly as for proving \eqref{3.13} and obtain that
\be\label{3.14}
\frac{d}{dt}\|\bH_k\|_{L^p}+c_p2^{2k}\|\bH_k\|_{L^p}\leq
\|{\bf m}_k\|_{L^p}+C2^k\|(\bw_k,\mathcal{P}\bu_k)\|_{L^p}+C2^{-k}\|\nabla a_k\|_{L^p}
\ee
and
\be\label{3.15}
\frac{d}{dt}\|\bw_k\|_{L^p}+c_p2^{2k}\|\bw_k\|_{L^p}\leq
C2^{-k}\|{f}_k\|_{L^p}+\|({\bf g}_k,\bw_k)\|_{L^p}+C2^k\|\bH_k\|_{L^p}+C2^{-2k}\|\nabla a_k\|_{L^p}.
\ee
Since the function $a$ fulfills the damped transport equation \eqref{3.12}$_4$, then performing the operator $\partial_i\dot{\Delta}_k$ to \eqref{3.12}$_4$ and denoting $R_k^i\equ [\bu\cdot\nabla, \partial_i\dot{\Delta}_k]a$, one has
\be\label{3.16}
\partial_t\partial_ia_k+\bu\cdot\nabla\partial_ia_k+\partial_ia_k=-\partial_i\dot{\Delta}_k(a{\rm div}\bu)
-\partial_i{\rm div}\bw_k+R_k^i,\,\,\,i=1,2,\cdots,N.
\ee
Multiplying both sides of \eqref{3.16} by $|\partial_ia_k|^{p-2}\partial_ia_k$, integrating on $\mathbb{R}^N$, and performing an integration by parts in the second term, we arrive at
\be\nonumber
\begin{split}
\frac{1}{p}\frac{d}{dt}\|\partial_ia_k\|_{L^p}^p+\|\partial_ia_k\|_{L^p}^p=\frac{1}{p}&\int_{\mathbb{R}^N}
{\rm div}\bu|\partial_ia_k|^pdx\\[1ex]
&\quad+\int_{\mathbb{R}^N}(R_k^i-\partial_i\dot{\Delta}_k(a{\rm div}\bu)
-\partial_i{\rm div}\bw_k)|\partial_ia_k|^{p-2}\partial_ia_kdx.
\end{split}
\ee
Summing up on $i=1,2,\cdots,N$ and applying H\"{o}lder and Bernstein inequalities imply
\be\label{3.17}
\begin{split}
\frac{1}{p}\frac{d}{dt}\|\nabla a_k\|_{L^p}^p+\|\nabla a_k\|_{L^p}^p\leq &\Big(\frac{1}{p}\|{\rm div}\bu\|_{L^\infty}
\|\nabla a_k\|_{L^p}+\|\nabla\dot{\Delta}_k(a{\rm div}\bu)\|_{L^p}\\[1ex]
&\quad\quad\quad\quad\quad\quad+C2^{2k}\|\bw_k\|_{L^p}+\|R_k\|_{L^p}
\Big)\|\nabla a_k\|_{L^p}^{p-1},
\end{split}
\ee
which leads to
\be\label{3.18}
\begin{split}
&\quad\frac{1}{p}\frac{d}{dt}\|\nabla a_k\|_{L^p}+\|\nabla a_k\|_{L^p}\\[1ex]
&\leq \frac{1}{p}\|{\rm div}\bu\|_{L^\infty}
\|\nabla a_k\|_{L^p}+\|\nabla\dot{\Delta}_k(a{\rm div}\bu)\|_{L^p}
+C2^{2k}\|\bw_k\|_{L^p}+\|R_k\|_{L^p}.
\end{split}
\ee
Adding \eqref{3.18} (multiplying by $\beta c_p$ for some $\beta>0$), \eqref{3.13}, \eqref{3.14}
and \eqref{3.15} together gives
\be\nonumber
\begin{split}
  &\quad\frac{d}{dt}\left(\|(\mathcal{P}\bu_k, \bw_k, \bH_k)\|_{L^p}+\beta c_p\|\nabla a_k\|_{L^p}\right)
  +c_p2^{2k}(\mu^\ast\|\mathcal{P}\bu_k\|_{L^p}+\|(\bw_k, \bH_k)\|_{L^p})+\beta c_p\|\nabla a_k\|_{L^p}\\[1ex]
  &\leq\|\mathcal{P}{\bf g}_k\|_{L^p}+C2^k\|\bH_k\|_{L^p}+\|{\bf m}_k\|_{L^p}+C2^k\|(\bw_k,\mathcal{P}\bu_k)\|_{L^p}+C2^{-k}\|\nabla a_k\|_{L^p}\\[1ex]
  &\quad+\beta c_p\left(\frac{1}{p}\|{\rm div}\bu\|_{L^\infty}
\|\nabla a_k\|_{L^p}+\|\nabla\dot{\Delta}_k(a{\rm div}\bu)\|_{L^p}
+C2^{2k}\|\bw_k\|_{L^p}+\|R_k\|_{L^p}\right)\\[1ex]
  &\quad+C2^{-k}\|{f}_k\|_{L^p}+\|({\bf g}_k,\bw_k)\|_{L^p}+C2^{-2k}\|\nabla a_k\|_{L^p}.
\end{split}
\ee
Choosing $k_0$ suitably large and $\beta$ sufficiently small, we deduce that there exists a constant $c_0>0$ such that
for all $k\geq k_0+1$,
\be\nonumber
\begin{split}
  &\quad\frac{d}{dt}\|(\mathcal{P}\bu_k, \bw_k, \bH_k,\nabla a_k)\|_{L^p}+c_0\left(2^{2k}\|(\mathcal{P}\bu_k,\bw_k,\bH_k)\|_{L^p}
  +\|\nabla a_k\|_{L^p}\right)\\[1ex]
  &\lesssim 2^{-k}\|f_k\|_{L^p}+\|({\bf m}_k, {\bf g}_k)\|_{L^p}+\|{\rm div}\bu\|_{L^\infty}
\|\nabla a_k\|_{L^p}+\|\nabla\dot{\Delta}_k(a{\rm div}\bu)\|_{L^p}
+\|R_k\|_{L^p}.
\end{split}
\ee
Since
$$
\bu=\bw-\nabla (-\Delta)^{-1}a+\mathcal{P}\bu,
$$
it follows that
\be\nonumber
\begin{split}
  &\quad\frac{d}{dt}\|(\nabla a_k, \bu_k, \bH_k)\|_{L^p}+c_0\|(\nabla a_k, 2^{2k}\bu_k, 2^{2k}\bH_k)\|_{L^p}\\[1ex]
  &\lesssim \|(2^{-k}f_k, {\bf m}_k, {\bf g}_k)\|_{L^p}+\|{\rm div}\bu\|_{L^\infty}
\|\nabla a_k\|_{L^p}+\|\nabla\dot{\Delta}_k(a{\rm div}\bu)\|_{L^p}
+\|R_k\|_{L^p}.
\end{split}
\ee
Thus, multiplying by $2^{k(\frac{N}{p}-1)}$, summing up over $k\geq k_0+1$ and applying Corollary \ref{co2.1}
and Proposition \ref{pra.5}, we conclude \eqref{3.11}.
\end{proof}
\section{Estimation of $L^2$-type Besov norms at low frequencies}\label{s:5}
\begin{prop}\label{pr4.1}
Let $1-\frac{N}{2}<\sigma_1\leq \frac{2N}{p}-\frac{N}{2} (N\geq 2)$ and $p$ satisfy \eqref{1.3}.
Then the following two estimates  hold true:
\be\label{4.1}
\|fg\|_{\dot{B}_{2,\infty}^{-\sigma_1}}\lesssim\|f\|_{\dot{B}_{p,1}^{\frac{N}{p}}}\|g\|_{\dot{B}_{2,\infty}^{-\sigma_1}},
\ee
and
\be\label{4.2}
\|fg\|_{\dot{B}_{2,\infty}^{-\sigma_1}}^\ell\lesssim\|f\|_{\dot{B}_{p,1}^{\frac{N}{p}-1}}
\left(\|g\|_{\dot{B}_{p,\infty}^{-\sigma_1+\frac{N}{p}-\frac{N}{2}+1}}
+\|g\|_{\dot{B}_{p,\infty}^{-\sigma_1+\frac{2N}{p}-N+1}}\right).
\ee
\end{prop}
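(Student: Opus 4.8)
The plan is to prove both estimates by inserting Bony's decomposition \eqref{2.3}, $fg=T_fg+T_gf+R(f,g)$, and controlling the three pieces separately. The recurring difficulty is that the target norm is built on $L^2$ while the factors (at least in \eqref{4.2}) only live in $L^p$-based spaces with $p\ge 2$; since Bernstein's inequality (Lemma \ref{le2.1}) only \emph{raises} the Lebesgue index, one cannot pass directly from $L^p$ to $L^2$. The device I would use throughout is to place the relevant (sub-)product in $L^{p/2}$ by H\"older and then apply $L^{p/2}\hookrightarrow L^2$, which is licit precisely because assumption \eqref{1.3} forces $p\le 4$, hence $p/2\le 2$; equivalently, for the paraproducts I would split by H\"older with the conjugate exponent $\bar p\equ\frac{2p}{p-2}$ (so that $\frac1{\bar p}+\frac1p=\frac12$, with the convention $\bar p=\infty$ when $p=2$) and estimate the low-frequency factor $\dot{S}_{q-1}(\cdot)$ in $L^{\bar p}$ via Bernstein. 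The low-frequency cut-off $k\le k_0$ together with the $\ell^\infty$ (supremum) nature of $\dot{B}_{2,\infty}$ means I only need the resulting power of $2^k$ to be \emph{nonnegative}, rather than summable, which gives room at the endpoints.

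For \eqref{4.1}, since $f\in\dot{B}_{p,1}^{N/p}\hookrightarrow L^\infty$ by Sobolev embedding (Proposition \ref{pr2.1}), the paraproduct $T_fg$ is handled at once by the first continuity statement of Proposition \ref{pr2.2}, giving $\|T_fg\|_{\dot{B}_{2,\infty}^{-\sigma_1}}\lesssim\|f\|_{L^\infty}\|g\|_{\dot{B}_{2,\infty}^{-\sigma_1}}$. For $T_gf$ I would estimate $\dot{S}_{q-1}g\,\dot{\Delta}_q f$ in $L^2$ by $\|\dot{S}_{q-1}g\|_{L^{\bar p}}\|\dot{\Delta}_q f\|_{L^p}$, where Bernstein yields $\|\dot{S}_{q-1}g\|_{L^{\bar p}}\lesssim 2^{q(\sigma_1+N/p)}\|g\|_{\dot{B}_{2,\infty}^{-\sigma_1}}$; the low-frequency sum converges exactly because $\sigma_1+N/p>0$, which is where the lower bound $\sigma_1>1-\frac N2$ combined with $p\le\frac{2N}{N-2}$ is used. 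The remainder $R(f,g)$ I would treat by the remainder part of Proposition \ref{pr2.2} followed by $\dot{B}_{P,\infty}^{N/p-\sigma_1}\hookrightarrow\dot{B}_{2,\infty}^{-\sigma_1}$ with $\frac1P=\frac1p+\frac12$, noting $N/p-\sigma_1\ge N(\tfrac12-\tfrac1p)\ge0$ by the upper bound on $\sigma_1$; the borderline case $p=2,\ \sigma_1=\frac N2$ would be recovered by a direct dyadic computation using the $\ell^1$ structure of $\dot{B}_{p,1}^{N/p}$.

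For \eqref{4.2} the factor $f$ carries one derivative less ($f\in\dot{B}_{p,1}^{N/p-1}$) and $g$ is $L^p$-based, which is where the two norms on the right-hand side enter. I expect $T_fg$ and $R(f,g)$ to be governed by the first norm $\dot{B}_{p,\infty}^{-\sigma_1+N/p-N/2+1}$: for $R$ the dyadic estimate gives regularity sum $\frac{2N}{p}-\frac N2-\sigma_1\ge0$ and an overall power $2^{k\sigma_1}$ that cancels the weight, while $T_fg$ requires placing $\dot{S}_{q-1}f$ in $L^{\bar p}$ (the non-oscillatory regime $p\le N$) or in $L^p$ with the product in $L^{p/2}$ (the oscillatory regime $p>N$) — a dichotomy that the single bound unifies. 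The paraproduct $T_gf$, by contrast, forces the second norm $\dot{B}_{p,\infty}^{-\sigma_1+2N/p-N+1}$: the convergence of $\sum_{j\le q-2}2^{jN(2/p-1/2)}\|\dot{\Delta}_j g\|_{L^p}$ entering $\|\dot{S}_{q-1}g\|_{L^{\bar p}}$ reduces, with this exponent, precisely to $\sigma_1>1-\frac N2$, whereas the first exponent would demand the stronger $\sigma_1>1-\frac Np$, which fails for $p>2$. Summing the three contributions then produces the bound by the sum of the two $g$-norms, consistently with Corollary \ref{co2.2}.

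The main obstacle, and the reason the estimate is ``non-classical'', is exactly this interplay: reaching an $L^2$ low-frequency norm from $L^p$ data with $2\le p\le 4$ leaves no single Sobolev embedding available, so the low-frequency pieces $\dot{S}_{q-1}f$ and $\dot{S}_{q-1}g$ must be routed through the auxiliary space $L^{\bar p}$ (or $L^p$), and their convergence is dictated by the two endpoint conditions $\sigma_1>1-\frac N2$ and $\sigma_1\le\frac{2N}{p}-\frac N2$. I expect the two most delicate points to be (i) the oscillation/non-oscillation transition in $T_fg$, which is precisely what necessitates carrying both $g$-norms, and (ii) the remainder at the endpoint $\sigma_1=\frac{2N}{p}-\frac N2$, where the regularity sum vanishes and one must exploit the $\ell^1$-summability built into $\dot{B}_{p,1}^{N/p-1}$ rather than a geometric series.
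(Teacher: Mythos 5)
Your proposal is correct and follows essentially the same route as the paper's proof: Bony's decomposition, H\"older with the conjugate exponent $p^\ast=\frac{2p}{p-2}$ plus Bernstein on each piece, the hypotheses $\sigma_1>1-\frac{N}{2}$ and $\sigma_1\le\frac{2N}{p}-\frac{N}{2}$ entering through $T_gf$ and $R(f,g)$ respectively, and the low-frequency cut-off absorbing the loss $\frac{N}{p}-\frac{N}{2}$ in the $T_gf$ term of \eqref{4.2}. The only cosmetic difference is that you invoke Proposition \ref{pr2.2} and a Sobolev embedding (plus a separate endpoint patch at $p=2$, $\sigma_1=\frac{N}{2}$) for two of the pieces in \eqref{4.1}, where the paper runs the dyadic sums directly and the $\ell^1$ summability of $\dot{B}_{p,1}^{N/p}$ handles that endpoint automatically.
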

\begin{proof} Denote $p^\ast\equ\frac{2p}{p-2}$, i.e., $\frac{1}{p}+\frac{1}{p^\ast}=\frac{1}{2}$.
By \eqref{2.3}, we decompose $fg$ into $T_fg+R(f,g)+T_gf$.

Firstly, we prove \eqref{4.1}. Thanks to \eqref{1.8}, we have
\begin{equation}\label{4.3}
\begin{split}
\|\dot{\Delta}_j(T_fg)\|_{L^2}&=\|\sum_{|k-j|\leq 4}\dot{\Delta}_j(\dot{S}_{k-1}f\dot{\Delta}_kg)\|_{L^2}
=\|\sum_{|k-j|\leq 4}\sum_{k^\prime\leq k-2}\dot{\Delta}_j(\dot{\Delta}_{k^\prime}f\dot{\Delta}_kg)\|_{L^2}\\[1ex]
&\lesssim\sum_{|k-j|\leq 4}\sum_{k^\prime\leq k-2}\|\dot{\Delta}_{k^\prime}f\|_{L^\infty}\|\dot{\Delta}_kg\|_{L^2}\\[1ex]
&\lesssim\sum_{|k-j|\leq 4}\sum_{k^\prime\leq k-2}2^{k^\prime\frac{N}{p}}\|\dot{\Delta}_{k^\prime}f\|_{L^p}2^{k\sigma_1}2^{-k\sigma_1}\|\dot{\Delta}_kg\|_{L^2}\\[1ex]
&\lesssim 2^{j\sigma_1}\|f\|_{\dot{B}_{p,1}^{\frac{N}{p}}}\|g\|_{\dot{B}_{2,\infty}^{-\sigma_1}}.
\end{split}
\end{equation}
For the remainder term, one gets
\begin{equation}\label{4.4}
\begin{split}
\|\dot{\Delta}_jR(f,g)\|_{L^2}&=\|\sum_{k\geq j-3}\sum_{|k-k^\prime|\leq 1}\dot{\Delta}_j(\dot{\Delta}_{k}f\dot{\Delta}_{k^\prime}g)\|_{L^2}\leq \sum_{k\geq j-3}\sum_{|k-k^\prime|\leq 1}
\|\dot{\Delta}_j(\dot{\Delta}_{k}f\dot{\Delta}_{k^\prime}g)\|_{L^2}\\[1ex]
&\lesssim 2^{j\frac{N}{p}}\sum_{k\geq j-3}\sum_{|k-k^\prime|\leq 1}\|\dot{\Delta}_{k}f\dot{\Delta}_{k^\prime}g\|_{L^{\frac{2p}{p+2}}}\\[1ex]
&\lesssim 2^{j\frac{N}{p}}\sum_{k\geq j-3}\sum_{|k-k^\prime|\leq 1}2^{-k\frac{N}{p}}2^{k\frac{N}{p}}\|\dot{\Delta}_{k}f\|_{L^{p}}2^{k^\prime\sigma_1}
2^{-k^\prime\sigma_1}\|\dot{\Delta}_{k^\prime}g\|_{L^{2}}\\[1ex]
&\lesssim2^{j\frac{N}{p}}\sum_{k\geq j-3}2^{k(\sigma_1-\frac{N}{p})}c(k)\|f\|_{\dot{B}_{p,1}^{\frac{N}{p}}}\|g\|_{\dot{B}_{2,\infty}^{-\sigma_1}}\lesssim2^{j\sigma_1}\|f\|_{\dot{B}_{p,1}^{\frac{N}{p}}}\|g\|_{\dot{B}_{2,\infty}^{-\sigma_1}},
\end{split}
\end{equation}
here $\|c(k)\|_{l^1}=1$ and we  used that $\sigma_1-\frac{N}{p}\leq 0$ as $\sigma_1\leq \frac{2N}{p}-\frac{N}{2}\leq \frac{N}{p}$ in the last inequality.

For the  term $T_gf$,  it follows that
\begin{equation}\label{4.5}
\begin{split}
\|\dot{\Delta}_j(T_gf)\|_{L^2}&=\|\sum_{|k-j|\leq 4}\dot{\Delta}_j(\dot{S}_{k-1}g\dot{\Delta}_kf)\|_{L^2}
=\|\sum_{|k-j|\leq 4}\sum_{k^\prime\leq k-2}\dot{\Delta}_j(\dot{\Delta}_{k^\prime}g\dot{\Delta}_kf)\|_{L^2}\\[1ex]
&\lesssim\sum_{|k-j|\leq 4}\sum_{k^\prime\leq k-2}\|\dot{\Delta}_{k^\prime}g\|_{L^{p^\ast}}\|\dot{\Delta}_kf\|_{L^p}\\[1ex]
&\lesssim\sum_{|k-j|\leq 4}\sum_{k^\prime\leq k-2}2^{k^\prime(\frac{N}{p}+\sigma_1)}2^{-k^\prime\sigma_1}\|\dot{\Delta}_{k^\prime}g\|_{L^2}2^{-k\frac{N}{p}}2^{k\frac{N}{p}}\|\dot{\Delta}_kf\|_{L^p}\\[1ex]
&\lesssim 2^{j\sigma_1}\|f\|_{\dot{B}_{p,1}^{\frac{N}{p}}}\|g\|_{\dot{B}_{2,\infty}^{-\sigma_1}},
\end{split}
\end{equation}
here $\sigma_1+\frac{N}{p}>0$ since  $\sigma_1>1-\frac{N}{2}\geq -\frac{N}{p}$ if $p\leq \frac{2N}{N-2}$. Combining \eqref{4.3}, \eqref{4.4} and \eqref{4.5}, we finish the proof of \eqref{4.1}.

Now, we are in a position to prove \eqref{4.2}. For the paraproduct term $T_fg$, we have
\begin{equation}\label{4.6}
\begin{split}
\|\dot{\Delta}_j(T_fg)\|_{L^2}
&\leq\sum_{|k-j|\leq 4}\sum_{k^\prime\leq k-2}\|\dot{\Delta}_j(\dot{\Delta}_{k^\prime}f\dot{\Delta}_kg)\|_{L^2}\lesssim\sum_{|k-j|\leq 4}\sum_{k^\prime\leq k-2}\|\dot{\Delta}_{k^\prime}f\|_{L^{p^\ast}}\|\dot{\Delta}_kg\|_{L^p}\\[1ex]
&\lesssim\sum_{|k-j|\leq 4}\sum_{k^\prime\leq k-2}2^{k^\prime(\frac{2N}{p}-\frac{N}{2})}
\|\dot{\Delta}_{k^\prime}f\|_{L^p}\|\dot{\Delta}_kg\|_{L^p}\\[1ex]
&\lesssim\sum_{|k-j|\leq 4}\sum_{k^\prime\leq k-2}2^{k^\prime(\frac{2N}{p}-\frac{N}{2}+1-\frac{N}{p})}2^{k^\prime(\frac{N}{p}-1)}
\|\dot{\Delta}_{k^\prime}f\|_{L^p}\\[1ex]
&\quad\quad\quad\quad\quad\quad\quad\quad\quad\quad\quad\quad\times
2^{k(\sigma_1-\frac{N}{p}+\frac{N}{2}-1)}2^{-k(\sigma_1-\frac{N}{p}+\frac{N}{2}-1)}\|\dot{\Delta}_kg\|_{L^p}\\[1ex]
&\lesssim 2^{j\sigma_1}\|f\|_{\dot{B}_{p,1}^{\frac{N}{p}-1}}\|g\|_{\dot{B}_{p,\infty}^{-\sigma_1+\frac{N}{p}-\frac{N}{2}+1}},
\end{split}
\end{equation}
where we have used that $1+\frac{N}{p}-\frac{N}{2}\geq 0$ and $p^\ast\geq p$ as $p$ fulfills $2\leq p\leq \min(4,\frac{2N}{N-2})$.

For the remainder term, one gets
\begin{equation}\label{4.7}
\begin{split}
\|\dot{\Delta}_jR(f,g)\|_{L^2}&\leq \sum_{k\geq j-3}\sum_{|k-k^\prime|\leq 1}
\|\dot{\Delta}_j(\dot{\Delta}_{k}f\dot{\Delta}_{k^\prime}g)\|_{L^2}\\[1ex]
&\lesssim 2^{j(\frac{2N}{p}-\frac{N}{2})}\sum_{k\geq j-3}\sum_{|k-k^\prime|\leq 1}2^{k(1-\frac{N}{p})}2^{k(\frac{N}{p}-1)}\|\dot{\Delta}_{k}f\|_{L^{p}}\\[1ex]
&\quad\quad\quad\quad\quad\quad\quad\quad\quad\quad\quad\times2^{k^\prime(\sigma_1-\frac{N}{p}+\frac{N}{2}-1)}
2^{-k^\prime(\sigma_1-\frac{N}{p}+\frac{N}{2}-1)}\|\dot{\Delta}_{k^\prime}g\|_{L^{p}}\\[1ex]
&\lesssim2^{j(\frac{2N}{p}-\frac{N}{2})}\sum_{k\geq j-3}2^{k(\sigma_1-\frac{2N}{p}+\frac{N}{2})}c(k)\|f\|_{\dot{B}_{p,1}^{\frac{N}{p}-1}}
\|g\|_{\dot{B}_{p,\infty}^{-\sigma_1+\frac{N}{p}-\frac{N}{2}+1}}\\[1ex]
&
\lesssim2^{j\sigma_1}\|f\|_{\dot{B}_{p,1}^{\frac{N}{p}-1}}\|g\|_{\dot{B}_{p,\infty}^{-\sigma_1+\frac{N}{p}-\frac{N}{2}+1}},
\end{split}
\end{equation}
here $\|c(k)\|_{l^1}=1$ and  we have used the condition $\sigma_1\leq \frac{2N}{p}-\frac{N}{2}$ in the last inequality.

For the  term $T_gf$,  we could obtain
\begin{equation}\label{4.8}
\begin{split}
\|\dot{\Delta}_j(T_gf)\|_{L^2}&\leq \sum_{|k-j|\leq 4}\sum_{k^\prime\leq k-2}\|\dot{\Delta}_j(\dot{\Delta}_{k^\prime}g\dot{\Delta}_kf)\|_{L^2}\leq\sum_{|k-j|\leq 4}\sum_{k^\prime\leq k-2}\|\dot{\Delta}_{k^\prime}g\|_{L^{p^\ast}}\|\dot{\Delta}_kf\|_{L^p}\\[1ex]
&\lesssim\sum_{|k-j|\leq 4}\sum_{k^\prime\leq k-2}2^{k^\prime(\frac{2N}{p}-\frac{N}{2})}\|\dot{\Delta}_{k^\prime}g\|_{L^p}\|\dot{\Delta}_kf\|_{L^p}\\[1ex]
&\lesssim\sum_{|k-j|\leq 4}\sum_{k^\prime\leq k-2}2^{k^\prime(\frac{2N}{p}-\frac{N}{2}+\sigma_1-\frac{2N}{p}+N-1)}2^{k^\prime(-\sigma_1+\frac{2N}{p}-N+1)}
\|\dot{\Delta}_{k^\prime}g\|_{L^p}\\[1ex]
&\quad\quad\quad\quad\quad\quad\quad\quad\quad\quad\quad\quad\quad\quad\quad\quad\quad\quad\quad
\times2^{k(1-\frac{N}{p})}2^{k(\frac{N}{p}-1)}
\|\dot{\Delta}_kf\|_{L^p}\\[1ex]
&\lesssim 2^{j(\sigma_1+\frac{N}{2}-\frac{N}{p})}
\|f\|_{\dot{B}_{p,1}^{\frac{N}{p}-1}}\|g\|_{\dot{B}_{p,\infty}^{-\sigma_1+\frac{2N}{p}-N+1}},
\end{split}
\end{equation}
where we used that $\sigma_1>1-\frac{N}{2}$ in the last inequality.

From \eqref{4.6} and \eqref{4.7}, we deduce that
\be\label{4.9}
\|T_fg+R(f,g)\|_{\dot{B}_{2,\infty}^{-\sigma_1}}\lesssim\|f\|_{\dot{B}_{p,1}^{\frac{N}{p}-1}}\|g\|_{\dot{B}_{p,\infty}^{-\sigma_1+\frac{N}{p}-\frac{N}{2}+1}}
\ee
and from \eqref{4.8}, we get for $p\geq 2$ that
\be\label{4.10}
\|T_gf\|_{\dot{B}_{2,\infty}^{-\sigma_1}}^\ell\leq
\|T_gf\|_{\dot{B}_{2,\infty}^{-\sigma_1+\frac{N}{p}-\frac{N}{2}}}^\ell
\lesssim\|f\|_{\dot{B}_{p,1}^{\frac{N}{p}-1}}\|g\|_{\dot{B}_{p,\infty}^{-\sigma_1+\frac{2N}{p}-N+1}}.
\ee
Combining \eqref{4.9} and \eqref{4.10}, we get \eqref{4.2}.
\end{proof}

Next, we begin to estimate the $L^2$-type Besov norms at low frequencies, which is the main ingredient in the proof of Theorem \ref{th1.2}.
\begin{lemma}\label{le3}
Let $1-\frac{N}{2}<\sigma_1\leq \frac{2N}{p}-\frac{N}{2}$ and $p$ satisfy \eqref{1.3}, it holds that
\begin{equation}\label{4.11}
\begin{split}
&\Big(\|(a, \bu,\bH)(t)\|_{\dot{B}_{2,\infty}^{-\sigma_1}}^\ell\Big)^2\lesssim
\Big(\|(a_0, \bu_0,\bH_0)\|_{\dot{B}_{2,\infty}^{-\sigma_1}}^\ell\Big)^2\\[1ex]
&\quad\quad\quad\quad+\int_0^tA_1(\tau)\Big(\|(a, \bu,\bH)(\tau)\|_{\dot{B}_{2,\infty}^{-\sigma_1}}^\ell\Big)^2d\tau
+\int_0^tA_2(\tau)\|(a, \bu,\bH)(\tau)\|_{\dot{B}_{2,\infty}^{-\sigma_1}}^\ell d\tau,
\end{split}
\end{equation}
where
\begin{equation}\nonumber
\begin{split}
A_1(t)&\equ\|(a,\bu,\bH)\|_{\dot{B}_{2,1}^{\frac{N}{2}+1}}^\ell+\|a\|_{\dot{B}_{p,1}^{\frac{N}{p}}}^h
+\|(\bu, \bH)\|_{\dot{B}_{p,1}^{\frac{N}{p}+1}}^h\\[1ex]
&\quad\quad\quad\quad\quad\quad\quad\quad\quad\quad\quad\quad+\|a\|_{\dot{B}_{p,1}^{\frac{N}{p}}}^2+\|a\|_{\dot{B}_{p,1}^{\frac{N}{p}}}\|(\bu, \bH)\|_{\dot{B}_{p,1}^{\frac{N}{p}+1}}+\|a\|_{\dot{B}_{p,1}^{\frac{N}{p}}}\|\bH\|_{\dot{B}_{p,1}^{\frac{N}{p}}}
\end{split}
\end{equation}
and
\begin{equation}\nonumber
\begin{split}
A_2(t)&\equ\Big(\|(a,\bu,\bH)\|_{\dot{B}_{p,1}^{\frac{N}{p}}}^h\Big)^2
+\|(\bu,\bH)\|_{\dot{B}_{p,1}^{\frac{N}{p}+1}}^h\|a\|_{\dot{B}_{p,1}^{\frac{N}{p}}}\|a\|_{\dot{B}_{p,1}^{\frac{N}{p}}}^h
+\|a\|_{\dot{B}_{p,1}^{\frac{N}{p}}}^2\|a\|_{\dot{B}_{p,1}^{\frac{N}{p}}}^h\\[1ex]
&\quad\quad\quad\quad\quad\quad\quad\quad\quad\quad\quad\quad\quad\quad\quad+\|a\|_{\dot{B}_{p,1}^{\frac{N}{p}}}^h\|(\bu, \bH)\|_{\dot{B}_{p,1}^{\frac{N}{p}+1}}^h
+\Big(\|\bH\|_{\dot{B}_{p,1}^{\frac{N}{p}}}^h\Big)^2\|a\|_{\dot{B}_{p,1}^{\frac{N}{p}}}.
\end{split}
\end{equation}
\end{lemma}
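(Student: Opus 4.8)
The plan is to run a frequency‑localized $L^{2}$ energy estimate on \eqref{1.5}, convert it into a differential inequality for the low‑frequency $\dot B^{-\sigma_1}_{2,\infty}$ norm, and then reduce everything to one nonlinear product estimate which is handled by Proposition~\ref{pr4.1}. First I would apply $\dot{\Delta}_k$ to \eqref{1.5} (writing $z_k:=\dot{\Delta}_kz$) and take the $L^2$ scalar products of the three equations with $a_k$, $\bu_k$ and $\bH_k$. As in Lemma~\ref{le1}, the linear couplings disappear: $(\,{\rm div}\,\bu_k,a_k)+(\nabla a_k,\bu_k)=0$, the pair $(\nabla(I\cdot\bH_k),\bu_k)+(({\rm div}\,\bu_k)I,\bH_k)=0$, and $-(I\cdot\nabla\bH_k,\bu_k)-(I\cdot\nabla\bu_k,\bH_k)=0$, the last two using that $I$ is constant and ${\rm div}\,\bH=0$. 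Since $-(\mathcal{A}\bu_k,\bu_k)$ and $-(\Delta\bH_k,\bH_k)$ are nonnegative by the ellipticity condition $\mu^\ast>0$, $\lambda^\ast+2\mu^\ast=1$, I may discard them and obtain, with $y_k:=\|(a_k,\bu_k,\bH_k)\|_{L^2}$,
\[
\tfrac{1}{2}\tfrac{d}{dt}y_k^2\le \|(f_k,{\bf g}_k,{\bf m}_k)\|_{L^2}\,y_k,
\qquad\text{hence}\qquad
\tfrac{d}{dt}y_k\le \|(f_k,{\bf g}_k,{\bf m}_k)\|_{L^2}.
\]

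Next I would pass to the targeted norm. Multiplying by $2^{-2k\sigma_1}y_k$ and using $2^{-k\sigma_1}\|z_k\|_{L^2}\le\|z\|^\ell_{\dot{B}_{2,\infty}^{-\sigma_1}}$ for $k\le k_0$ gives
\[
\tfrac{1}{2}\tfrac{d}{dt}\bigl(2^{-k\sigma_1}y_k\bigr)^2
\le \|(f,{\bf g},{\bf m})\|^\ell_{\dot{B}_{2,\infty}^{-\sigma_1}}\;
\|(a,\bu,\bH)\|^\ell_{\dot{B}_{2,\infty}^{-\sigma_1}}.
\]
Integrating in time and taking the supremum over $k\le k_0$ (the right‑hand side being independent of $k$) reduces the lemma to the single nonlinear bound
\[
\|(f,{\bf g},{\bf m})\|^\ell_{\dot{B}_{2,\infty}^{-\sigma_1}}
\lesssim A_1(t)\,\|(a,\bu,\bH)\|^\ell_{\dot{B}_{2,\infty}^{-\sigma_1}}+A_2(t),
\]
since then $\int_0^t\|(f,{\bf g},{\bf m})\|^\ell\,\|(a,\bu,\bH)\|^\ell\,d\tau$ splits exactly into the two integrals of \eqref{4.11}.

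The heart of the matter is this last estimate. For each product in $f$, ${\bf g}$ and ${\bf m}$ I would use Bony's decomposition \eqref{2.3}, split every factor into low and high frequencies, and invoke \eqref{4.1} together with the non‑classical estimate \eqref{4.2}; the smooth functions $\pi_1(a),\pi_2(a),\tfrac{1}{1+a},\widetilde\mu(a),\widetilde\lambda(a)$ are reduced to $a$ by Proposition~\ref{pra.4}. In a typical bilinear term the factor carrying the negative index is controlled by $\|(a,\bu,\bH)\|^\ell_{\dot{B}_{2,\infty}^{-\sigma_1}}$ on low frequencies and by $\|a\|^h_{\dot{B}_{p,1}^{N/p}}$, $\|(\bu,\bH)\|^h_{\dot{B}_{p,1}^{N/p\pm1}}$ on high frequencies (after an $L^2$–$L^p$ Sobolev embedding, the extra derivatives being harmless at low frequencies up to a factor $2^{k_0}$), while the other factor produces either the time‑integrable dissipation norm $\|\cdot\|^\ell_{\dot{B}_{2,1}^{N/2+1}}$ in low–low interactions (the leading term of $A_1$) or a norm $\|a\|_{\dot{B}_{p,1}^{N/p}}$; the purely high‑frequency interactions, carrying no factor of the negative‑index norm, assemble into $A_2$. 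For the magnetic trilinear term $\tfrac{1}{1+a}\bigl(\tfrac12\nabla|\bH|^2-\bH\cdot\nabla\bH\bigr)$ I would rewrite $\tfrac12\partial_i|\bH|^2-(\bH\cdot\nabla\bH)_i=\sum_j H_j(\partial_iH_j-\partial_jH_i)$ and exploit this antisymmetric structure, balancing the derivative between the two copies of $\bH$ so as to produce the cubic contribution $(\|\bH\|^h_{\dot{B}_{p,1}^{N/p}})^2\|a\|_{\dot{B}_{p,1}^{N/p}}$ of $A_2$.

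The main obstacle, and the reason \eqref{4.2} is indispensable, is that $p\ge2$ forbids controlling a high‑frequency $L^2$ norm by the available $L^p$ norm through Bernstein's inequality: the endpoint target $\dot{B}_{2,\infty}^{-\sigma_1}$ is $L^2$‑based, whereas the high frequencies of the solution live only in $L^p$‑based spaces. The estimate \eqref{4.2} circumvents this by letting a product of two $L^p$‑based factors land directly in the $L^2$‑based low‑frequency space, which unifies the non‑oscillatory range $2\le p\le N$ and the oscillatory range $p>N$ in a single argument. Once this is in place, the remaining step—matching each contribution to the precise algebraic form of $A_1$ and $A_2$—is routine though lengthy bookkeeping.
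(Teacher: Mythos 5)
Your proposal is correct, and its nonlinear core coincides with the paper's proof: both reduce \eqref{4.11} to the single bound $\|(f,{\bf g},{\bf m})\|_{\dot{B}_{2,\infty}^{-\sigma_1}}^\ell\lesssim A_1(t)\,\|(a,\bu,\bH)\|_{\dot{B}_{2,\infty}^{-\sigma_1}}^\ell+A_2(t)$, verified term by term via Bony's decomposition, low/high splitting of each factor, the product estimates \eqref{4.1}--\eqref{4.2}, the composition estimates of Proposition \ref{pra.4}, and the antisymmetric rewriting \eqref{4.200} of the magnetic trilinear term. Where you genuinely diverge is the linear step. The paper recycles Lemma \ref{le1}: it works with the compensated unknowns $\omega=\Lambda^{-1}{\rm div}\bu$, ${\bf\Omega}=\Lambda^{-1}{\rm curl}\bu$, ${\bf E}=\Lambda^{-1}{\rm curl}\bH$ and the functional $\mathcal{J}_k^2$ containing the cross term $\gamma\left(\|\Lambda a_k\|_{L^2}^2-2(a_k,\Lambda\omega_k)\right)$, i.e.\ it quotes \eqref{3.9} and \eqref{3.103}, multiplies by $2^{-2k\sigma_1}$, integrates, and takes the supremum over $k\leq k_0$ to reach \eqref{4.13}. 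You instead run a direct frequency-localized energy estimate on \eqref{1.5} in the original unknowns, using the three skew-symmetry cancellations of the linear couplings (all three identities are correct; note the third needs only that $I$ is constant, not ${\rm div}\bH=0$) and discarding the nonnegative viscous and resistive terms. This is legitimate and in fact more elementary: the cross term in $\mathcal{J}_k$ exists solely to generate dissipation in $a$, and \eqref{4.11} retains no dissipation whatsoever, so nothing is lost; both arguments land on exactly \eqref{4.13}. The paper's route costs nothing extra since Lemma \ref{le1} was already in place, while yours is self-contained and exposes why the compensating machinery is unnecessary here. One caveat: what you defer as ``routine though lengthy bookkeeping'' is where essentially all of the paper's proof resides (estimates \eqref{4.14}--\eqref{4.44}), and it is not entirely mechanical — for instance, placing the high-high interactions into $A_2$ uses $-\sigma_1+\frac{2N}{p}-N+1\leq-\sigma_1+\frac{N}{p}-\frac{N}{2}+1\leq\frac{N}{p}+1$ (from $\sigma_1>1-\frac{N}{2}$ and $p\geq 2$), and controlling low-frequency factors measured in $L^p$-based norms by $\|\cdot\|_{\dot{B}_{2,\infty}^{-\sigma_1}}^\ell$ uses an embedding valid only for $2\leq p\leq\frac{2N}{N-2}$ — so your text is a correct plan whose completion requires precisely the paper's case-by-case verification.
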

\begin{proof}
 From  \eqref{3.103} and \eqref{3.9}, we have for $k\leq k_0$,
 \begin{equation}\label{4.12}
 \begin{split}
 &\quad\frac{1}{2}\frac{d}{dt}\|(a_k, \omega_k, {\bf \Omega}_k, {\bf E}_k)\|_{L^2}^2+\|(\Lambda a_k,\Lambda \omega_k, \Lambda {\bf \Omega}_k, \Lambda {\bf E}_k)\|_{L^2}^2\\[1ex]
 &\lesssim \|(F_k, G_k, {\bf L}_k, {\bf M}_k)\|_{L^2}\|(a_k, \omega_k, {\bf \Omega}_k, {\bf E}_k)\|_{L^2}.
 \end{split}
 \end{equation}
Multiplying $2^{2k(-\sigma_1)}$ on both sides of \eqref{4.12}, taking supremum in terms of $k\leq k_0$, integrating
over $[0, t]$ and noticing that \eqref{3.104} and \eqref{3.101}, we arrive at
\begin{equation}\label{4.13}
\begin{split}
&\quad\Big(\|(a, \bu,\bH)(t)\|_{\dot{B}_{2,\infty}^{-\sigma_1}}^\ell\Big)^2\\[1ex]
&\lesssim
\Big(\|(a_0, \bu_0,\bH_0)\|_{\dot{B}_{2,\infty}^{-\sigma_1}}^\ell\Big)^2+\int_0^t\|(f, {\bf g},{\bf m})(\tau)\|_{\dot{B}_{2,\infty}^{-\sigma_1}}^\ell\|(a, \bu,\bH)(\tau)\|_{\dot{B}_{2,\infty}^{-\sigma_1}}^\ell  d\tau.
\end{split}
\end{equation}

Next, we focus on the estimates of $\|(f, {\bf g},{\bf m})\|_{\dot{B}_{2,\infty}^{-\sigma_1}}^\ell$. Firstly, we deal with the term $f=-{\rm div}(a\bu)=-a{\rm div}\bu-\bu\cdot\nabla a$.

\underline{Estimate of $a{\rm div}\bu$}. We decompose
$$a{\rm div}\bu=a^\ell{\rm div}\bu+a^h{\rm div}\bu^\ell+a^h{\rm div}\bu^h.$$
Making use of  \eqref{4.1}, we deduce
\be\label{4.14}
\|a^\ell{\rm div}\bu\|_{\dot{B}_{2,\infty}^{-\sigma_1}}\lesssim \|{\rm div}\bu\|_{\dot{B}_{p,1}^{\frac{N}{p}}}
\|a\|_{\dot{B}_{2,\infty}^{-\sigma_1}}^\ell\lesssim \Big(\|\bu\|_{\dot{B}_{2,1}^{\frac{N}{2}+1}}^\ell+\|\bu\|_{\dot{B}_{p,1}^{\frac{N}{p}+1}}^h\Big)
\|a\|_{\dot{B}_{2,\infty}^{-\sigma_1}}^\ell
\ee
and
\be\label{4.15}
\|a^h{\rm div}\bu^\ell\|_{\dot{B}_{2,\infty}^{-\sigma_1}}\lesssim \|a^h\|_{\dot{B}_{p,1}^{\frac{N}{p}}}
\|{\rm div}\bu^\ell\|_{\dot{B}_{2,\infty}^{-\sigma_1}}\lesssim \|a\|_{\dot{B}_{p,1}^{\frac{N}{p}}}^h
\|\bu\|_{\dot{B}_{2,\infty}^{-\sigma_1}}^\ell.
\ee
By virtue of  \eqref{4.2}, one gets
\be\label{4.16}
\begin{split}
\|a^h{\rm div}\bu^h\|_{\dot{B}_{2,\infty}^{-\sigma_1}}^\ell&\lesssim \|a^h\|_{\dot{B}_{p,1}^{\frac{N}{p}-1}}
\Big(\|{\rm div}\bu^h\|_{\dot{B}_{p,\infty}^{-\sigma_1+\frac{N}{p}-\frac{N}{2}+1}}+\|{\rm div}\bu^h\|_{\dot{B}_{p,\infty}^{-\sigma_1+\frac{2N}{p}-N+1}}\Big)\\[1ex]
&\lesssim\|a\|_{\dot{B}_{p,1}^{\frac{N}{p}}}^h\|\bu\|_{\dot{B}_{p,1}^{\frac{N}{p}+1}}^h,
\end{split}
\ee
where we used that
$
-\sigma_1+\frac{2N}{p}-N+2\leq -\sigma_1+\frac{N}{p}-\frac{N}{2}+2<\frac{N}{p}+1
$
since $\sigma_1>1-\frac{N}{2}$ and $p\geq 2$.

\underline{Estimate of $\bu\cdot\nabla a$}. Decomposing $\bu\cdot\nabla a=\bu^\ell\cdot\nabla a^\ell+\bu^h\cdot\nabla a^\ell+\bu^\ell\cdot\nabla a^h+\bu^h\cdot\nabla a^h$, we deduce from \eqref{4.1} that
\be\label{4.17}
\|\bu^\ell\nabla a^\ell\|_{\dot{B}_{2,\infty}^{-\sigma_1}}\lesssim\|\nabla a^\ell\|_{\dot{B}_{p,1}^{\frac{N}{p}}}
\|\bu^\ell\|_{\dot{B}_{2,\infty}^{-\sigma_1}}\lesssim\|a\|_{\dot{B}_{2,1}^{\frac{N}{2}+1}}^\ell
\|\bu\|_{\dot{B}_{2,\infty}^{-\sigma_1}}^\ell,
\ee
and
\be\label{4.18}
\|\bu^h\nabla a^\ell\|_{\dot{B}_{2,\infty}^{-\sigma_1}}\lesssim\|\bu^h\|_{\dot{B}_{p,1}^{\frac{N}{p}}}
\|\nabla a^\ell\|_{\dot{B}_{2,\infty}^{-\sigma_1}}\lesssim\|\bu\|_{\dot{B}_{p,1}^{\frac{N}{p}+1}}^h
\|a\|_{\dot{B}_{2,\infty}^{-\sigma_1}}^\ell.
\ee
From \eqref{4.2}, one arrives at
\be\label{4.19}
\begin{split}
\|\bu^\ell\nabla a^h\|_{\dot{B}_{2,\infty}^{-\sigma_1}}^\ell&\lesssim\|\nabla a^h\|_{\dot{B}_{p,1}^{\frac{N}{p}-1}}
\Big(\|\bu^\ell\|_{\dot{B}_{p,\infty}^{-\sigma_1+\frac{N}{p}-\frac{N}{2}+1}}
+\|\bu^\ell\|_{\dot{B}_{p,\infty}^{-\sigma_1+\frac{2N}{p}-N+1}}\Big)\\[1ex]
&\lesssim\|a^h\|_{\dot{B}_{p,1}^{\frac{N}{p}}}\|\bu^\ell\|_{\dot{B}_{p,\infty}^{-\sigma_1+\frac{2N}{p}-N+1}}
\lesssim\|a\|_{\dot{B}_{p,1}^{\frac{N}{p}}}^h\|\bu\|_{\dot{B}_{2,\infty}^{-\sigma_1}}^\ell,
\end{split}
\ee
where we used that $-\sigma_1+\frac{2N}{p}-N+1\leq -\sigma_1+\frac{N}{p}-\frac{N}{2}+1$ in the second inequality
and $\|\bu\|_{\dot{B}_{2,\infty}^{-\sigma_1}}^\ell
\hookrightarrow\|\bu\|_{\dot{B}_{p,\infty}^{-\sigma_1+\frac{2N}{p}-N+1}}^\ell$ when $2\leq p\leq \frac{2N}{N-2}$ in the last inequality. For the term $\bu^h\nabla a^h$, also by \eqref{4.2}, it follows that
\be\label{4.20}
\begin{split}
\|\bu^h\nabla a^h\|_{\dot{B}_{2,\infty}^{-\sigma_1}}^\ell&\lesssim\|\nabla a^h\|_{\dot{B}_{p,1}^{\frac{N}{p}-1}}
\Big(\|\bu^h\|_{\dot{B}_{p,\infty}^{-\sigma_1+\frac{N}{p}-\frac{N}{2}+1}}
+\|\bu^h\|_{\dot{B}_{p,\infty}^{-\sigma_1+\frac{2N}{p}-N+1}}\Big)\\[1ex]
&\lesssim\|a^h\|_{\dot{B}_{p,1}^{\frac{N}{p}}}\|\bu^h\|_{\dot{B}_{p,\infty}^{-\sigma_1+\frac{N}{p}-\frac{N}{2}+1}}
\lesssim\|a\|_{\dot{B}_{p,1}^{\frac{N}{p}}}^h\|\bu\|_{\dot{B}_{p,1}^{\frac{N}{p}+1}}^h,
\end{split}
\ee
where we have applied that
$
-\sigma_1+\frac{2N}{p}-N+1\leq -\sigma_1+\frac{N}{p}-\frac{N}{2}+1\leq \frac{N}{p}+1,
$
since $\sigma_1> 1-\frac{N}{2}$ and $p\geq 2$.

In what follows, we estimate $\|{\bf g}\|_{\dot{B}_{2,\infty}^{-\sigma_1}}^\ell$. Recall that
\be\nonumber
\begin{split}
&{\bf g}\equ -\bu\cdot\nabla\bu-\pi_1(a)\mathcal{A}\bu-\pi_2(a)\nabla a+\frac{1}{1+a}{\rm div}\Big(2\widetilde\mu(a)
D(\bu)+\widetilde\lambda(a){\rm div}\bu\,{\rm Id}\Big)\\[1ex]
&\quad\quad+\pi_1(a)(\nabla(I\cdot\bH)-I\cdot\nabla\bH)-\frac{1}{1+a}\Big(\frac{1}{2}\nabla
|\bH|^2-\bH\cdot\nabla\bH\Big).
\end{split}
\ee

\underline{Estimate of $\bu\cdot\nabla\bu$}. Decompose $\bu\cdot\nabla\bu=\bu^\ell\cdot\nabla\bu^\ell+\bu^\ell\cdot\nabla\bu^h
+\bu^h\cdot\nabla\bu^\ell+\bu^h\cdot\nabla\bu^h$.
It holds from \eqref{4.1} that
\be\label{4.23}
\|\bu^\ell\cdot\nabla \bu^\ell\|_{\dot{B}_{2,\infty}^{-\sigma_1}}\lesssim\|\nabla \bu^\ell\|_{\dot{B}_{p,1}^{\frac{N}{p}}}
\|\bu^\ell\|_{\dot{B}_{2,\infty}^{-\sigma_1}}\lesssim\|\bu\|_{\dot{B}_{2,1}^{\frac{N}{2}+1}}^\ell
\|\bu\|_{\dot{B}_{2,\infty}^{-\sigma_1}}^\ell,
\ee
\be\label{4.24}
\|\bu^h\cdot\nabla \bu^\ell\|_{\dot{B}_{2,\infty}^{-\sigma_1}}\lesssim\|\bu^h\|_{\dot{B}_{p,1}^{\frac{N}{p}}}
\|\nabla\bu^\ell\|_{\dot{B}_{2,\infty}^{-\sigma_1}}\lesssim\|\bu\|_{\dot{B}_{p,1}^{\frac{N}{p}+1}}^h
\|\bu\|_{\dot{B}_{2,\infty}^{-\sigma_1}}^\ell.
\ee
In a similar way as deriving \eqref{4.19} and \eqref{4.20}, one has by \eqref{4.2} that
\be\label{4.124}
\begin{split}
\|\bu^\ell\cdot\nabla \bu^h\|_{\dot{B}_{2,\infty}^{-\sigma_1}}^\ell&\lesssim\|\nabla \bu^h\|_{\dot{B}_{p,1}^{\frac{N}{p}-1}}
\Big(\|\bu^\ell\|_{\dot{B}_{p,\infty}^{-\sigma_1+\frac{N}{p}-\frac{N}{2}+1}}
+\|\bu^\ell\|_{\dot{B}_{p,\infty}^{-\sigma_1+\frac{2N}{p}-N+1}}\Big)\\[1ex]
&\lesssim\|\bu^h\|_{\dot{B}_{p,1}^{\frac{N}{p}}}\|\bu^\ell\|_{\dot{B}_{p,\infty}^{-\sigma_1+\frac{2N}{p}-N+1}}
\lesssim\|\bu\|_{\dot{B}_{p,1}^{\frac{N}{p}+1}}^h\|\bu\|_{\dot{B}_{2,\infty}^{-\sigma_1}}^\ell,
\end{split}
\ee
and
\be\label{4.25}
\begin{split}
\|\bu^h\cdot\nabla \bu^h\|_{\dot{B}_{2,\infty}^{-\sigma_1}}^\ell&\lesssim\|\nabla \bu^h\|_{\dot{B}_{p,1}^{\frac{N}{p}-1}}
\Big(\|\bu^h\|_{\dot{B}_{p,\infty}^{-\sigma_1+\frac{N}{p}-\frac{N}{2}+1}}
+\|\bu^h\|_{\dot{B}_{p,\infty}^{-\sigma_1+\frac{2N}{p}-N+1}}\Big)\\[1ex]
&\lesssim\|\bu^h\|_{\dot{B}_{p,1}^{\frac{N}{p}}}\|\bu^h\|_{\dot{B}_{p,\infty}^{-\sigma_1+\frac{N}{p}-\frac{N}{2}+1}}
\lesssim\|\bu\|_{\dot{B}_{p,1}^{\frac{N}{p}}}^h\|\bu\|_{\dot{B}_{p,1}^{\frac{N}{p}}}^h.
\end{split}
\ee

\underline{Estimate of $\pi_1(a)\mathcal{A}\bu$}. Keeping in mind that $\pi_1(0)=0$, one may write
$$
\pi_1(a)=\pi_1^\prime(0)a+\bar{\pi}_1(a)a
$$
for some smooth function $\bar{\pi}_1$ vanishing at $0$. Thus, through \eqref{4.1} again, we have
\be\label{4.26}
\|a^\ell\mathcal{A}\bu^\ell\|_{\dot{B}_{2,\infty}^{-\sigma_1}}\lesssim\|\mathcal{A}\bu^\ell\|_{\dot{B}_{p,1}^{\frac{N}{p}}}
\|a^\ell\|_{\dot{B}_{2,\infty}^{-\sigma_1}}\lesssim\|\bu\|_{\dot{B}_{2,1}^{\frac{N}{2}+1}}^\ell
\|a\|_{\dot{B}_{2,\infty}^{-\sigma_1}}^\ell,
\ee
and
\be\label{4.27}
\|a^h\mathcal{A}\bu^\ell\|_{\dot{B}_{2,\infty}^{-\sigma_1}}\lesssim\|a^h\|_{\dot{B}_{p,1}^{\frac{N}{p}}}
\|\mathcal{A}\bu^\ell\|_{\dot{B}_{2,\infty}^{-\sigma_1}}\lesssim\|a\|_{\dot{B}_{p,1}^{\frac{N}{p}}}^h
\|\bu\|_{\dot{B}_{2,\infty}^{-\sigma_1}}^\ell.
\ee
Arguing similarly as \eqref{4.19} and \eqref{4.20}, one has
\be\label{4.28}
\begin{split}
\|a^\ell\mathcal{A} \bu^h\|_{\dot{B}_{2,\infty}^{-\sigma_1}}^\ell&\lesssim\|\mathcal{A} \bu^h\|_{\dot{B}_{p,1}^{\frac{N}{p}-1}}
\Big(\|a^\ell\|_{\dot{B}_{p,\infty}^{-\sigma_1+\frac{N}{p}-\frac{N}{2}+1}}
+\|a^\ell\|_{\dot{B}_{p,\infty}^{-\sigma_1+\frac{2N}{p}-N+1}}\Big)\\[1ex]
&\lesssim\|\bu\|_{\dot{B}_{p,1}^{\frac{N}{p}+1}}^h\|a^\ell\|_{\dot{B}_{p,\infty}^{-\sigma_1+\frac{2N}{p}-N+1}}
\lesssim\|\bu\|_{\dot{B}_{p,1}^{\frac{N}{p}+1}}^h\|a\|_{\dot{B}_{2,\infty}^{-\sigma_1}}^\ell,
\end{split}
\ee
and
\be\label{4.29}
\begin{split}
\|a^h\mathcal{A}\bu^h\|_{\dot{B}_{2,\infty}^{-\sigma_1}}^\ell&\lesssim\|\mathcal{A}\bu^h\|_{\dot{B}_{p,1}^{\frac{N}{p}-1}}
\Big(\|a^h\|_{\dot{B}_{p,\infty}^{-\sigma_1+\frac{N}{p}-\frac{N}{2}+1}}
+\|a^h\|_{\dot{B}_{p,\infty}^{-\sigma_1+\frac{2N}{p}-N+1}}\Big)\\[1ex]
&\lesssim\|\bu^h\|_{\dot{B}_{p,1}^{\frac{N}{p}+1}}\|a^h\|_{\dot{B}_{p,\infty}^{-\sigma_1+\frac{N}{p}-\frac{N}{2}+1}}
\lesssim\|\bu\|_{\dot{B}_{p,1}^{\frac{N}{p}+1}}^h\|a\|_{\dot{B}_{p,1}^{\frac{N}{p}}}^h.
\end{split}
\ee
On the other hand, from \eqref{4.1}, \eqref{4.2}, Proposition \ref{pra.4} and Corollaries \ref{co2.1} and \ref{co2.2}, we have
\be\label{4.30}
\|\bar{\pi}_1(a)a\mathcal{A}\bu^\ell\|_{\dot{B}_{2,\infty}^{-\sigma_1}}
\lesssim\|\bar{\pi}_1(a)a\|_{\dot{B}_{p,1}^{\frac{N}{p}}}\|\mathcal{A}\bu^\ell\|_{\dot{B}_{2,\infty}^{-\sigma_1}}
\lesssim\|a\|_{\dot{B}_{p,1}^{\frac{N}{p}}}^2\|\bu\|_{\dot{B}_{2,\infty}^{-\sigma_1}}^\ell,
\ee
and
\be\label{4.31}
\begin{split}
\|\bar{\pi}_1(a)a\mathcal{A}\bu^h\|_{\dot{B}_{2,\infty}^{-\sigma_1}}^\ell
&\lesssim\|\mathcal{A}\bu^h\|_{\dot{B}_{p,1}^{\frac{N}{p}-1}}
\Big(\|\bar{\pi}_1(a)a\|_{\dot{B}_{p,\infty}^{-\sigma_1+\frac{N}{p}-\frac{N}{2}+1}}
+\|\bar{\pi}_1(a)a\|_{\dot{B}_{p,\infty}^{-\sigma_1+\frac{2N}{p}-N+1}}\Big)\\[1ex]
&\lesssim\|\bu^h\|_{\dot{B}_{p,1}^{\frac{N}{p}+1}}\|\bar{\pi}_1(a)\|_{\dot{B}_{p,1}^{\frac{N}{p}}}
\Big(\|a\|_{\dot{B}_{p,\infty}^{-\sigma_1+\frac{N}{p}-\frac{N}{2}+1}}
+\|a\|_{\dot{B}_{p,\infty}^{-\sigma_1+\frac{2N}{p}-N+1}}\Big)\\[1ex]
&\lesssim\|\bu\|_{\dot{B}_{p,1}^{\frac{N}{p}+1}}^h
\|a\|_{\dot{B}_{p,1}^{\frac{N}{p}}}\Big(\|a\|_{\dot{B}_{p,\infty}^{-\sigma_1+\frac{N}{p}-\frac{N}{2}+1}}^h
+\|a\|_{\dot{B}_{p,\infty}^{-\sigma_1+\frac{2N}{p}-N+1}}^\ell\Big)\\[1ex]
&\lesssim\|\bu\|_{\dot{B}_{p,1}^{\frac{N}{p}+1}}^h
\|a\|_{\dot{B}_{p,1}^{\frac{N}{p}}}\Big(\|a\|_{\dot{B}_{p,1}^{\frac{N}{p}}}^h
+\|a\|_{\dot{B}_{2,\infty}^{-\sigma_1}}^\ell\Big).
\end{split}
\ee

\underline{Estimate of $\pi_2(a)\nabla a$}. In view of $\pi_2(0)=0$, we may write $\pi_2(a)=\pi_2^\prime(0)a+\bar{\pi}_2(a)a$, here $\bar{\pi}_2$ is a smooth function fulfilling $\bar{\pi}_2(0)=0$. For the term $a\nabla a$, we have
\be\label{4.32}
\|a^\ell\nabla a^\ell\|_{\dot{B}_{2,\infty}^{-\sigma_1}}\lesssim\|\nabla a^\ell\|_{\dot{B}_{p,1}^{\frac{N}{p}}}
\|a^\ell\|_{\dot{B}_{2,\infty}^{-\sigma_1}}\lesssim\|a\|_{\dot{B}_{2,1}^{\frac{N}{2}+1}}^\ell
\|a\|_{\dot{B}_{2,\infty}^{-\sigma_1}}^\ell,
\ee
and
\be\label{4.33}
\|a^h\nabla a^\ell\|_{\dot{B}_{2,\infty}^{-\sigma_1}}\lesssim\|a^h\|_{\dot{B}_{p,1}^{\frac{N}{p}}}
\|\nabla a^\ell\|_{\dot{B}_{2,\infty}^{-\sigma_1}}\lesssim\|a\|_{\dot{B}_{p,1}^{\frac{N}{p}}}^h
\|a\|_{\dot{B}_{2,\infty}^{-\sigma_1}}^\ell.
\ee
Arguing similarly as \eqref{4.28} and \eqref{4.29}, one has
\be\label{4.34}
\begin{split}
\|a^\ell\nabla a^h\|_{\dot{B}_{2,\infty}^{-\sigma_1}}^\ell&\lesssim\|\nabla a^h\|_{\dot{B}_{p,1}^{\frac{N}{p}-1}}
\Big(\|a^\ell\|_{\dot{B}_{p,\infty}^{-\sigma_1+\frac{N}{p}-\frac{N}{2}+1}}
+\|a^\ell\|_{\dot{B}_{p,\infty}^{-\sigma_1+\frac{2N}{p}-N+1}}\Big)\\[1ex]
&\lesssim\|a\|_{\dot{B}_{p,1}^{\frac{N}{p}}}^h\|a^\ell\|_{\dot{B}_{p,\infty}^{-\sigma_1+\frac{2N}{p}-N+1}}
\lesssim\|a\|_{\dot{B}_{p,1}^{\frac{N}{p}}}^h\|a\|_{\dot{B}_{2,\infty}^{-\sigma_1}}^\ell,
\end{split}
\ee
and
\be\label{4.35}
\begin{split}
\|a^h\nabla a^h\|_{\dot{B}_{2,\infty}^{-\sigma_1}}^\ell&\lesssim\|\nabla a^h\|_{\dot{B}_{p,1}^{\frac{N}{p}-1}}
\Big(\|a^h\|_{\dot{B}_{p,\infty}^{-\sigma_1+\frac{N}{p}-\frac{N}{2}+1}}
+\|a^h\|_{\dot{B}_{p,\infty}^{-\sigma_1+\frac{2N}{p}-N+1}}\Big)\\[1ex]
&\lesssim\|a^h\|_{\dot{B}_{p,1}^{\frac{N}{p}}}\|a^h\|_{\dot{B}_{p,\infty}^{-\sigma_1+\frac{N}{p}-\frac{N}{2}+1}}
\lesssim\|a\|_{\dot{B}_{p,1}^{\frac{N}{p}}}^h\|a\|_{\dot{B}_{p,1}^{\frac{N}{p}}}^h.
\end{split}
\ee
As for the term $\bar{\pi}_2(a)a\nabla a$, we use the decomposition $\bar{\pi}_2(a)a\nabla a=\bar{\pi}_2(a)a\nabla a^\ell+\bar{\pi}_2(a)a\nabla a^h$ and get from \eqref{4.1}-\eqref{4.2}, Corollary \ref{co2.2} and Proposition \ref{pra.4} again that
\be\label{4.36}
\begin{split}
\|\bar{\pi}_2(a)a\nabla a^\ell\|_{\dot{B}_{2,\infty}^{-\sigma_1}}\lesssim\|\bar{\pi}_2(a)a\|_{\dot{B}_{p,1}^{\frac{N}{p}}}
\|\nabla a\|_{\dot{B}_{2,\infty}^{-\sigma_1}}^\ell\lesssim\|a\|_{\dot{B}_{p,1}^{\frac{N}{p}}}^2
\|a\|_{\dot{B}_{2,\infty}^{-\sigma_1}}^\ell,
\end{split}
\ee
and
\be\label{4.37}
\begin{split}
\|\bar{\pi}_2(a)a\nabla a^h\|_{\dot{B}_{2,\infty}^{-\sigma_1}}^\ell&\lesssim\|\nabla a^h\|_{\dot{B}_{p,1}^{\frac{N}{p}-1}}
\Big(\|\bar{\pi}_2(a)a\|_{\dot{B}_{p,\infty}^{-\sigma_1+\frac{N}{p}-\frac{N}{2}+1}}
+\|\bar{\pi}_2(a)a\|_{\dot{B}_{p,\infty}^{-\sigma_1+\frac{2N}{p}-N+1}}\Big)\\[1ex]
&\lesssim\|a^h\|_{\dot{B}_{p,1}^{\frac{N}{p}}}\|\bar{\pi}_2(a)\|_{\dot{B}_{p,1}^{\frac{N}{p}}}
\Big(\|a\|_{\dot{B}_{p,\infty}^{-\sigma_1+\frac{N}{p}-\frac{N}{2}+1}}
+\|a\|_{\dot{B}_{p,\infty}^{-\sigma_1+\frac{2N}{p}-N+1}}\Big)\\[1ex]
&\lesssim\|a\|_{\dot{B}_{p,1}^{\frac{N}{p}}}^2\Big(\|a\|_{\dot{B}_{2,\infty}^{-\sigma_1}}^\ell
+\|a\|_{\dot{B}_{p,1}^{\frac{N}{p}}}^h\Big).
\end{split}
\ee

\underline{Estimate of $\frac{1}{1+a}(2\widetilde{\mu}(a){\rm div}D(\bu)+\widetilde{\lambda}(a)\nabla{\rm div}\bu)$}.
The estimate of this term could be similarly handled  as the term $\pi_1(a)\mathcal{A}\bu$ and we omit it here.

\underline{Estimate of $\frac{1}{1+a}(2\widetilde{\mu}^\prime(a)D(\bu)\cdot\nabla a+\widetilde{\lambda}^\prime(a){\rm div}\bu\nabla a)$}. We only deal with the term $\frac{2\widetilde{\mu}^\prime(a)}{1+a}D(\bu)\cdot\nabla a$ and the remainder term could be  similarly handled. Denote by $J(a)$ the smooth function fulfilling
\be\label{4.1000}
J^\prime(a)=\frac{2\mu^\prime(a)}{1+a}
\,\,\,{\rm and}\,\,\, J(0)=0, \,\,\,{\rm so\,\, that}\,\,\, \nabla J(a)=\frac{2\mu^\prime(a)}{1+a}\nabla a.
\ee
Decomposing $J(a)=J^\prime(0)a+\bar{J}(a)a$ implies
$\nabla J(a)=J^\prime(0)\nabla a+\nabla (\bar{J}(a)a)$. Then we have from \eqref{4.1} and \eqref{4.2} that
\be\label{4.38}
\|\nabla a^\ell D(\bu)^\ell\|_{\dot{B}_{2,\infty}^{-\sigma_1}}\lesssim\|\nabla a^\ell\|_{\dot{B}_{p,1}^{\frac{N}{p}}}
\|D(\bu)^\ell\|_{\dot{B}_{2,\infty}^{-\sigma_1}}\lesssim\|a\|_{\dot{B}_{2,1}^{\frac{N}{2}+1}}^\ell
\|\bu\|_{\dot{B}_{2,\infty}^{-\sigma_1}}^\ell,
\ee
\be\label{4.39}
\|\nabla a^\ell D(\bu)^h\|_{\dot{B}_{2,\infty}^{-\sigma_1}}\lesssim\|D(\bu)^h\|_{\dot{B}_{p,1}^{\frac{N}{p}}}
\|\nabla a^\ell\|_{\dot{B}_{2,\infty}^{-\sigma_1}}\lesssim\|\bu\|_{\dot{B}_{p,1}^{\frac{N}{p}+1}}^h
\|a\|_{\dot{B}_{2,\infty}^{-\sigma_1}}^\ell,
\ee
and
\be\label{4.40}
\begin{split}
\|\nabla a^h D(\bu)\|_{\dot{B}_{2,\infty}^{-\sigma_1}}^\ell&\lesssim\|\nabla a^h\|_{\dot{B}_{p,1}^{\frac{N}{p}-1}}
\Big(\|D(\bu)\|_{\dot{B}_{p,\infty}^{-\sigma_1+\frac{N}{p}-\frac{N}{2}+1}}
+\|D(\bu)\|_{\dot{B}_{p,\infty}^{-\sigma_1+\frac{2N}{p}-N+1}}\Big)\\[1ex]
&\lesssim\|a^h\|_{\dot{B}_{p,1}^{\frac{N}{p}}}
\Big(\|\bu\|_{\dot{B}_{p,\infty}^{-\sigma_1+\frac{N}{p}-\frac{N}{2}+2}}^h
+\|\bu\|_{\dot{B}_{p,\infty}^{-\sigma_1+\frac{2N}{p}-N+2}}^\ell\Big)\\[1ex]
&\lesssim\|a\|_{\dot{B}_{p,1}^{\frac{N}{p}}}^h\Big(\|\bu\|_{\dot{B}_{p,1}^{\frac{N}{p}+1}}^h+\|\bu\|_{\dot{B}_{2,\infty}^{-\sigma_1}}^\ell
\Big).
\end{split}
\ee

\underline{Estimate of $\pi_1(a)(\nabla (I\cdot \bH)-I\cdot \nabla \bH)$}. Similar as above, we decompose
$\pi_1(a)=\pi_1^\prime(0)a+\bar{\pi}_1(a)a$. Firstly, the estimate of $a(\nabla (I\cdot \bH)-I\cdot \nabla \bH)$
is similar to that of  $a{\rm div}\bu$ and we omit it here. The remaining term may be estimated as follows.
\be\label{4.41}
\|\bar{\pi}_1(a) a(\nabla(I\cdot\bH)-I\cdot\nabla\bH)^\ell\|_{\dot{B}_{2,\infty}^{-\sigma_1}}\lesssim\|\bar{\pi}_1(a) a\|_{\dot{B}_{p,1}^{\frac{N}{p}}}
\|\nabla \bH^\ell\|_{\dot{B}_{2,\infty}^{-\sigma_1}}\lesssim\|a\|_{\dot{B}_{p,1}^{\frac{N}{p}}}^2
\|\bH\|_{\dot{B}_{2,\infty}^{-\sigma_1}}^\ell,
\ee
and
\be\label{4.42}
\begin{split}
&\quad\|\bar{\pi}_1(a) a(\nabla(I\cdot\bH)-I\cdot\nabla\bH)^h\|_{\dot{B}_{2,\infty}^{-\sigma_1}}^\ell\\[1ex]
&\lesssim\|\nabla \bH^h\|_{\dot{B}_{p,1}^{\frac{N}{p}-1}}
\Big(\|\bar{\pi}_1(a)a\|_{\dot{B}_{p,\infty}^{-\sigma_1+\frac{N}{p}-\frac{N}{2}+1}}
+\|\bar{\pi}_1(a)a\|_{\dot{B}_{p,\infty}^{-\sigma_1+\frac{2N}{p}-N+1}}\Big)\\[1ex]
&\lesssim\|\bH^h\|_{\dot{B}_{p,1}^{\frac{N}{p}+1}}\|\bar{\pi}_1(a)\|_{\dot{B}_{p,1}^{\frac{N}{p}}}
\Big(\|a\|_{\dot{B}_{p,\infty}^{-\sigma_1+\frac{N}{p}-\frac{N}{2}+1}}
+\|a\|_{\dot{B}_{p,\infty}^{-\sigma_1+\frac{2N}{p}-N+1}}\Big)\\[1ex]
&\lesssim\|\bH\|_{\dot{B}_{p,1}^{\frac{N}{p}+1}}^h\|a\|_{\dot{B}_{p,1}^{\frac{N}{p}}}\Big(\|a\|_{\dot{B}_{2,\infty}^{-\sigma_1}}^\ell
+\|a\|_{\dot{B}_{p,1}^{\frac{N}{p}}}^h\Big).
\end{split}
\ee

\underline{Estimate of $\frac{1}{1+a}(\frac{1}{2}\nabla |\bH|^2-\bH\cdot \nabla \bH)$}. Since $\frac{1}{1+a}=1-\pi_1(a)$, it follows that
\be\label{4.200}
\begin{split}
\frac{1}{1+a}\Big(\frac{1}{2}\nabla |\bH|^2-\bH\cdot \nabla \bH\Big)&=\Big(\frac{1}{2}\nabla |\bH|^2-\bH\cdot \nabla \bH\Big)
-\pi_1(a)\Big(\frac{1}{2}\nabla |\bH|^2-\bH\cdot \nabla \bH\Big)\\[1ex]
&=\bH\cdot((\nabla \bH)^T-\nabla \bH)
-\pi_1(a)\bH\cdot((\nabla \bH)^T-\nabla \bH),
\end{split}
\ee
where the superscript $T$ represents  the transpose of a matrix.

For the term with $\bH\cdot((\nabla \bH)^T-\nabla \bH)$, we can handle it similar to the term $\bu\cdot\nabla\bu$, while regarding the term with $\pi_1(a)\bH\cdot((\nabla \bH)^T-\nabla \bH)$, we have from \eqref{4.1} and \eqref{4.2} again that
\be\label{4.43}
\|\pi_1(a)\bH\cdot(\nabla\bH)^\ell\|_{\dot{B}_{2,\infty}^{-\sigma_1}}\lesssim\|\pi_1(a)\bH\|_{\dot{B}_{p,1}^{\frac{N}{p}}}
\|\nabla\bH\|_{\dot{B}_{2,\infty}^{-\sigma_1}}^\ell\lesssim \|a\|_{\dot{B}_{p,1}^{\frac{N}{p}}}\|\bH\|_{\dot{B}_{p,1}^{\frac{N}{p}}}\|\bH\|_{\dot{B}_{2,\infty}^{-\sigma_1}}^\ell,
\ee
and
\be\label{4.44}
\begin{split}
\|\pi_1(a)\bH\cdot(\nabla\bH)^h\|_{\dot{B}_{2,\infty}^{-\sigma_1}}^\ell
&\lesssim\|\nabla \bH^h\|_{\dot{B}_{p,1}^{\frac{N}{p}-1}}
\Big(\|{\pi}_1(a)\bH\|_{\dot{B}_{p,\infty}^{-\sigma_1+\frac{N}{p}-\frac{N}{2}+1}}
+\|{\pi}_1(a)\bH\|_{\dot{B}_{p,\infty}^{-\sigma_1+\frac{2N}{p}-N+1}}\Big)\\[1ex]
&\lesssim\|\bH^h\|_{\dot{B}_{p,1}^{\frac{N}{p}}}\|{\pi}_1(a)\|_{\dot{B}_{p,1}^{\frac{N}{p}}}
\Big(\|\bH\|_{\dot{B}_{p,\infty}^{-\sigma_1+\frac{N}{p}-\frac{N}{2}+1}}^h
+\|\bH\|_{\dot{B}_{p,\infty}^{-\sigma_1+\frac{2N}{p}-N+1}}^\ell\Big)\\[1ex]
&\lesssim\|\bH\|_{\dot{B}_{p,1}^{\frac{N}{p}}}^h\|a\|_{\dot{B}_{p,1}^{\frac{N}{p}}}\Big(
\|\bH\|_{\dot{B}_{p,1}^{\frac{N}{p}}}^h+\|\bH\|_{\dot{B}_{2,\infty}^{-\sigma_1}}^\ell\Big).
\end{split}
\ee

\underline{Estimate of ${\bf m}$}.
Since
$${\bf m}\equ -\bH({\rm div}\bu)+\bH\cdot\nabla\bu-\bu\cdot\nabla\bH,$$
 then its estimation is  similar to that of  $\bu\cdot\nabla \bu$ and we omit it here. Finally, inserting all  estimates above into \eqref{4.13}, we complete the proof of \eqref{4.11}.
\end{proof}

By the definition of $\mathcal{X}_p(t)$ in Theorem \ref{th1.1}, one has
\be\nonumber
\begin{split}
\|(a^\ell, \bu^\ell, \bH^\ell)\|_{L_t^2(\dot{B}_{p,1}^{\frac{N}{p}})}&\lesssim\|(a^\ell, \bu^\ell, \bH^\ell)\|_{L_t^\infty(\dot{B}_{p,1}^{\frac{N}{p}-1})}^{\frac{1}{2}}\|(a^\ell, \bu^\ell, \bH^\ell)\|_{L_t^1(\dot{B}_{p,1}^{\frac{N}{p}+1})}^{\frac{1}{2}}\\[1ex]
&\lesssim\Big(\|(a, \bu, \bH)\|_{L_t^\infty(\dot{B}_{2,1}^{\frac{N}{2}-1})}^\ell\Big)^{\frac{1}{2}}\Big(\|(a, \bu, \bH)\|_{L_t^1(\dot{B}_{2,1}^{\frac{N}{2}+1})}^\ell\Big)^{\frac{1}{2}},
\end{split}
\ee
\be\nonumber
\|a^h\|_{L_t^2(\dot{B}_{p,1}^{\frac{N}{p}})}\lesssim\Big(\|a\|_{L_t^\infty(\dot{B}_{p,1}^{\frac{N}{p}})}^h\Big)^{\frac{1}{2}}
\Big(\|a\|_{L_t^1(\dot{B}_{p,1}^{\frac{N}{p}})}^h\Big)^{\frac{1}{2}},
\ee
and
\be\nonumber
\|(\bu^h,\bH^h)\|_{L_t^2(\dot{B}_{p,1}^{\frac{N}{p}})}\lesssim\Big(\|(\bu,\bH)\|_{L_t^\infty(\dot{B}_{p,1}^{\frac{N}{p}-1})}^h\Big)^{\frac{1}{2}}
\Big(\|(\bu,\bH)\|_{L_t^1(\dot{B}_{p,1}^{\frac{N}{p}+1})}^h\Big)^{\frac{1}{2}}.
\ee
On the other hand, it follows that
\be\nonumber
\|a\|_{L_t^\infty(\dot{B}_{p,1}^{\frac{N}{p}})}\lesssim\|a\|_{L_t^\infty(\dot{B}_{p,1}^{\frac{N}{p}})}^\ell
+\|a\|_{L_t^\infty(\dot{B}_{p,1}^{\frac{N}{p}})}^h\lesssim\|a\|_{L_t^\infty(\dot{B}_{2,1}^{\frac{N}{2}-1})}^\ell
+\|a\|_{L_t^\infty(\dot{B}_{p,1}^{\frac{N}{p}})}^h.
\ee
Then, we have
\be\label{4.45}
\int_0^t(A_1(\tau)+A_2(\tau))d\tau\leq \mathcal{X}_p+\mathcal{X}_p^2+\mathcal{X}_p^3\leq C\mathcal{X}_{p,0},
\ee
which yields from Gronwall's inequality (see for example, Page 360 of \cite{mitrinovic1994inequalities}) that
\be\label{4.46}
\|(a,\bu,\bH)\|_{\dot{B}_{2,\infty}^{-\sigma_1}}^\ell\leq C_0
\ee
for all $t\geq 0$, where $C_0>0$ depends on the norm $\|(a_0, \bu_0, \bH_0)\|_{\dot{B}_{2,\infty}^{-\sigma_1}}^\ell$.
\section{Proofs of main results}\label{s:6}
This section is devoted to proving Theorem \ref{th1.2} and Corollary \ref{col1}.
\subsection{Proof of Theorem \ref{th1.2}}
From Lemmas \ref{le1} and \ref{le2}, one deduces that
\be\label{5.1}
\begin{split}
  &\quad\frac{d}{dt}\Big(\|(a, \bu, \bH)\|_{\dot{B}_{2,1}^{\frac{N}{2}-1}}^\ell+\|(\nabla a, \bu, \bH)\|_{\dot{B}_{p,1}^{\frac{N}{p}-1}}^h\Big)\\[1ex]
  &\quad\quad\quad\quad\quad\quad\quad\quad\quad\quad\quad\quad\quad\quad
  +\Big(\|(a, \bu, \bH)\|_{\dot{B}_{2,1}^{\frac{N}{2}+1}}^\ell
  +\|a\|_{\dot{B}_{p,1}^{\frac{N}{p}}}^h+\|(\bu, \bH)\|_{\dot{B}_{p,1}^{\frac{N}{p}+1}}^h\Big)\\[1ex]
  &\lesssim\|(f, {\bf g}, {\bf m})\|_{\dot{B}_{2,1}^{\frac{N}{2}-1}}^\ell+\|f\|_{\dot{B}_{p,1}^{\frac{N}{p}-2}}^h
  +\|({\bf g}, {\bf m})\|_{\dot{B}_{p,1}^{\frac{N}{p}-1}}^h+\|\nabla\bu\|_{\dot{B}_{p,1}^{\frac{N}{p}}}\|a\|_{\dot{B}_{p,1}^{\frac{N}{p}}}.
\end{split}
\ee
In what follows, we deal with the terms in the right hand of \eqref{5.1} one by one. Firstly, for the last term, we have
\be\label{5.2}
\begin{split}
\|\nabla\bu\|_{\dot{B}_{p,1}^{\frac{N}{p}}}\|a\|_{\dot{B}_{p,1}^{\frac{N}{p}}}
&\lesssim\Big(\|a\|_{\dot{B}_{2,1}^{\frac{N}{2}-1}}^\ell+\|a\|_{\dot{B}_{p,1}^{\frac{N}{p}}}^h\Big)
\Big(\|\bu\|_{\dot{B}_{2,1}^{\frac{N}{2}+1}}^\ell+\|\bu\|_{\dot{B}_{p,1}^{\frac{N}{p}+1}}^h\Big)\\[1ex]
&\lesssim\mathcal{X}_p(t)\Big(\|\bu\|_{\dot{B}_{2,1}^{\frac{N}{2}+1}}^\ell+\|\bu\|_{\dot{B}_{p,1}^{\frac{N}{p}+1}}^h\Big).
\end{split}
\ee
Next, notice that
$$
\|f\|_{\dot{B}_{p,1}^{\frac{N}{p}-2}}^h\lesssim \|a\bu\|_{\dot{B}_{p,1}^{\frac{N}{p}-1}}^h.
$$
Decomposing $a\bu=a^\ell\bu^\ell+a^\ell\bu^h+a^h\bu$, we have
$$
\|a^\ell\bu^h\|_{\dot{B}_{p,1}^{\frac{N}{p}-1}}^h\lesssim\|a^\ell\|_{\dot{B}_{p,1}^{\frac{N}{p}-1}}
\|\bu^h\|_{\dot{B}_{p,1}^{\frac{N}{p}}}\lesssim\|a\|_{\dot{B}_{2,1}^{\frac{N}{2}-1}}^\ell
\|\bu\|_{\dot{B}_{p,1}^{\frac{N}{p}+1}}^h\lesssim\mathcal{X}_p(t)\|\bu\|_{\dot{B}_{p,1}^{\frac{N}{p}+1}}^h,
$$
and
$$
\|a^h\bu\|_{\dot{B}_{p,1}^{\frac{N}{p}-1}}^h\lesssim\|a^h\|_{\dot{B}_{p,1}^{\frac{N}{p}}}
\|\bu\|_{\dot{B}_{p,1}^{\frac{N}{p}-1}}\lesssim\mathcal{X}_p(t)\|a\|_{\dot{B}_{p,1}^{\frac{N}{p}}}^h.
$$
It follows from Corollary \ref{co2.1} and Bernstein inequality that
$$
\|a^\ell\bu^\ell\|_{\dot{B}_{p,1}^{\frac{N}{p}-1}}^h\lesssim\|a^\ell\bu^\ell\|_{\dot{B}_{2,1}^{\frac{N}{2}+1}}
\lesssim\|a^\ell\|_{L^\infty}\|\bu^\ell\|_{\dot{B}_{2,1}^{\frac{N}{2}+1}}
+\|\bu^\ell\|_{L^\infty}\|a^\ell\|_{\dot{B}_{2,1}^{\frac{N}{2}+1}}
\lesssim\mathcal{X}_p(t)\|(a,\bu)\|_{\dot{B}_{2,1}^{\frac{N}{2}+1}}^\ell.
$$
Therefore, we conclude that
\be\label{5.3}
\|f\|_{\dot{B}_{p,1}^{\frac{N}{p}-2}}^h\lesssim\mathcal{X}_p(t)\Big(\|a\|_{\dot{B}_{p,1}^{\frac{N}{p}}}^h
+\|(a,\bu)\|_{\dot{B}_{2,1}^{\frac{N}{2}+1}}^\ell+\|\bu\|_{\dot{B}_{p,1}^{\frac{N}{p}+1}}^h\Big).
\ee

Now we are in a position to bound $\|({\bf g}, {\bf m})\|_{\dot{B}_{p,1}^{\frac{N}{p}-1}}^h$ and the tools  are mainly involved with Corollaries \ref{co2.1} and \ref{co2.2}, Proposition \ref{pra.4} and Bernstein inequality.
\be\label{5.4}
\|\bu\cdot\nabla \bu\|_{\dot{B}_{p,1}^{\frac{N}{p}-1}}^h\lesssim\|\bu\|_{\dot{B}_{p,1}^{\frac{N}{p}-1}}
\|\nabla\bu\|_{\dot{B}_{p,1}^{\frac{N}{p}}}
\lesssim\mathcal{X}_p(t)\Big(\|\bu\|_{\dot{B}_{2,1}^{\frac{N}{2}+1}}^\ell+\|\bu\|_{\dot{B}_{p,1}^{\frac{N}{p}+1}}^h\Big).
\ee
\be\label{5.5}
\begin{split}
\|\pi_1(a)\mathcal{A}\bu\|_{\dot{B}_{p,1}^{\frac{N}{p}-1}}^h&\lesssim\|\pi_1(a)\|_{\dot{B}_{p,1}^{\frac{N}{p}}}
\|\mathcal{A}\bu\|_{\dot{B}_{p,1}^{\frac{N}{p}-1}}\\[1ex]
&
\lesssim\|a\|_{\dot{B}_{p,1}^{\frac{N}{p}}}\Big(\|\bu\|_{\dot{B}_{2,1}^{\frac{N}{2}+1}}^\ell
+\|\bu\|_{\dot{B}_{p,1}^{\frac{N}{p}+1}}^h\Big)
\lesssim\mathcal{X}_p(t)\Big(\|\bu\|_{\dot{B}_{2,1}^{\frac{N}{2}+1}}^\ell+\|\bu\|_{\dot{B}_{p,1}^{\frac{N}{p}+1}}^h\Big).
\end{split}
\ee
\be\label{5.6}
\begin{split}
&\|\pi_2(a)\nabla a\|_{\dot{B}_{p,1}^{\frac{N}{p}-1}}^h\lesssim\|\pi_2(a)\|_{\dot{B}_{p,1}^{\frac{N}{p}}}
\|\nabla a\|_{\dot{B}_{p,1}^{\frac{N}{p}-1}}\lesssim\|a\|_{\dot{B}_{p,1}^{\frac{N}{p}}}^2
\lesssim\|a^\ell\|_{\dot{B}_{p,1}^{\frac{N}{p}}}^2
+\|a^h\|_{\dot{B}_{p,1}^{\frac{N}{p}}}^2\\[1ex]
&\quad\quad\quad\quad\lesssim\|a^\ell\|_{\dot{B}_{p,1}^{\frac{N}{p}-1}}\|a^\ell\|_{\dot{B}_{p,1}^{\frac{N}{p}+1}}
+\|a^h\|_{\dot{B}_{p,1}^{\frac{N}{p}}}\|a^h\|_{\dot{B}_{p,1}^{\frac{N}{p}}}
\lesssim\mathcal{X}_p(t)\Big(\|a\|_{\dot{B}_{2,1}^{\frac{N}{2}+1}}^\ell+\|a\|_{\dot{B}_{p,1}^{\frac{N}{p}}}^h\Big).
\end{split}
\ee

The term with $\frac{1}{1+a}\Big(2\widetilde{\mu}(a){\rm div}D(\bu)+\widetilde{\lambda}(a)\nabla{\rm div}\bu\Big)$
could be similarly handled as the term $\pi_1(a)\mathcal{A}\bu$ and we omit it here.

Regarding the term with $\frac{1}{1+a}(2\widetilde{\mu}^\prime(a)D(\bu)\cdot\nabla a+\widetilde{\lambda}^\prime(a){\rm div}\bu\nabla a)$, as before we only perform the term $\frac{2\widetilde{\mu}^\prime(a)}{1+a}D(\bu)\cdot\nabla a$ and the other could be  handled similarly. Denote by $J(a)$ the smooth function fulfilling $J^\prime(a)=\frac{2\mu^\prime(a)}{1+a}$
and $J(0)=0$, so that $\nabla J(a)=\frac{2\mu^\prime(a)}{1+a}\nabla a$. Then we have
\be\label{5.7}
\begin{split}
&\|\nabla J(a)D(\bu)\|_{\dot{B}_{p,1}^{\frac{N}{p}-1}}^h\lesssim\|D(\bu)\|_{\dot{B}_{p,1}^{\frac{N}{p}}}
\|\nabla J(a)\|_{\dot{B}_{p,1}^{\frac{N}{p}-1}}\\[1ex]
&\quad\quad\quad\quad\quad\quad\lesssim\|a\|_{\dot{B}_{p,1}^{\frac{N}{p}}}\Big(\|\bu\|_{\dot{B}_{2,1}^{\frac{N}{2}+1}}^\ell
+\|\bu\|_{\dot{B}_{p,1}^{\frac{N}{p}+1}}^h\Big)
\lesssim\mathcal{X}_p(t)\Big(\|\bu\|_{\dot{B}_{2,1}^{\frac{N}{2}+1}}^\ell+\|\bu\|_{\dot{B}_{p,1}^{\frac{N}{p}+1}}^h\Big).
\end{split}
\ee

For the term  $\pi_1(a)(\nabla (I\cdot \bH)-I\cdot \nabla \bH)$, it follows that
\be\label{5.8}
\begin{split}
&\|\pi_1(a)(\nabla (I\cdot \bH)-I\cdot \nabla \bH)\|_{\dot{B}_{p,1}^{\frac{N}{p}-1}}^h\lesssim\|\pi_1(a)\nabla \bH\|_{\dot{B}_{p,1}^{\frac{N}{p}}}^h
\lesssim\|a\|_{\dot{B}_{p,1}^{\frac{N}{p}}}\|\nabla\bH\|_{\dot{B}_{p,1}^{\frac{N}{p}}}\\[1ex]
&\quad\quad\quad\quad\quad\lesssim\|a\|_{\dot{B}_{p,1}^{\frac{N}{p}}}\Big(\|\bH\|_{\dot{B}_{2,1}^{\frac{N}{2}+1}}^\ell
+\|\bH\|_{\dot{B}_{p,1}^{\frac{N}{p}+1}}^h\Big)
\lesssim\mathcal{X}_p(t)\Big(\|\bH\|_{\dot{B}_{2,1}^{\frac{N}{2}+1}}^\ell+\|\bH\|_{\dot{B}_{p,1}^{\frac{N}{p}+1}}^h\Big).
\end{split}
\ee
As for the last term $\frac{1}{1+a}\Big(\frac{1}{2}\nabla |\bH|^2-\bH\cdot \nabla \bH\Big)$ in ${\bf g}$,
we also apply the decomposition \eqref{4.200} to yield that
\be\label{5.9}
\|\bH\cdot((\nabla \bH)^T-\nabla \bH)\|_{\dot{B}_{p,1}^{\frac{N}{p}-1}}^h
\lesssim\|\bH\|_{\dot{B}_{p,1}^{\frac{N}{p}-1}}
\|\nabla\bH\|_{\dot{B}_{p,1}^{\frac{N}{p}}}\lesssim\mathcal{X}_p(t)
\Big(\|\bH\|_{\dot{B}_{2,1}^{\frac{N}{2}+1}}^\ell+\|\bH\|_{\dot{B}_{p,1}^{\frac{N}{p}+1}}^h\Big)
\ee
and
\be\label{5.10}
\begin{split}
\|\pi_1(a)\bH\cdot((\nabla \bH)^T-\nabla \bH)\|_{\dot{B}_{p,1}^{\frac{N}{p}-1}}^h
&\lesssim\|\pi_1(a)\|_{\dot{B}_{p,1}^{\frac{N}{p}}}\|\bH\|_{\dot{B}_{p,1}^{\frac{N}{p}-1}}
\|\nabla\bH\|_{\dot{B}_{p,1}^{\frac{N}{p}}}\\[1ex]
&\lesssim\mathcal{X}_p(t)
\Big(\|\bH\|_{\dot{B}_{2,1}^{\frac{N}{2}+1}}^\ell+\|\bH\|_{\dot{B}_{p,1}^{\frac{N}{p}+1}}^h\Big).
\end{split}
\ee
Finally, for terms in ${\bf m}$, it holds that
\be\label{5.11}
\begin{split}
&\|-\bH({\rm div}\bu)+\bH\cdot\nabla\bu-\bu\cdot\nabla\bH\|_{\dot{B}_{p,1}^{\frac{N}{p}-1}}^h
\lesssim\|(\bu,\bH)\|_{\dot{B}_{p,1}^{\frac{N}{p}-1}}
\|(\nabla\bu,\nabla\bH)\|_{\dot{B}_{p,1}^{\frac{N}{p}}}\\[1ex]
&\quad\quad\quad\quad\quad\quad\quad\quad\quad\quad\quad\quad\quad\quad\quad\lesssim\mathcal{X}_p(t)
\Big(\|(\bu,\bH)\|_{\dot{B}_{2,1}^{\frac{N}{2}+1}}^\ell+\|(\bu,\bH)\|_{\dot{B}_{p,1}^{\frac{N}{p}+1}}^h\Big).
\end{split}
\ee
Combining \eqref{5.4}-\eqref{5.11}, we end up with
\be\label{5.12}
\|({\bf g}, {\bf m})\|_{\dot{B}_{p,1}^{\frac{N}{p}-1}}^h\lesssim\mathcal{X}_p(t)
\Big(\|(a,\bu,\bH)\|_{\dot{B}_{2,1}^{\frac{N}{2}+1}}^\ell
+\|a\|_{\dot{B}_{p,1}^{\frac{N}{p}}}^h+\|(\bu,\bH)\|_{\dot{B}_{p,1}^{\frac{N}{p}+1}}^h\Big).
\ee

In what follows, we bound the low frequency term $\|(f,{\bf g}, {\bf m})\|_{\dot{B}_{2,1}^{\frac{N}{2}-1}}^\ell$ in the right hand of \eqref{5.1}, which
has a little bit more difficult. Let us first introduce the following two inequalities:
\be\label{5.14}
\|T_fg\|_{\dot{B}_{2,1}^{\frac{N}{2}-1}}\lesssim\|f\|_{\dot{B}_{p,1}^{s}}\|{ g}\|_{\dot{B}_{p,1}^{-s+\frac{2N}{p}-1}}
\ee
if $s\leq \frac{2N}{p}-\frac{N}{2}$ and $2\leq p\leq 4$, and
\be\label{5.15}
\|R(f,g)\|_{\dot{B}_{2,1}^{\frac{N}{2}-1}}\lesssim\|f\|_{\dot{B}_{p,1}^{s}}\|{ g}\|_{\dot{B}_{p,1}^{-s+\frac{2N}{p}-1}}
\ee
if $N\geq 2$  and  $2\leq p\leq 4$.
\begin{proof}[Proof of \eqref{5.14}]
Set $\frac{1}{p^\ast}+\frac{1}{p}=1$ and $\|c(j)\|_{l^1}=1$. From the definition of $T_fg$, we obtain
\begin{equation}\label{5.16}
\begin{split}
\|\dot{\Delta}_j(T_fg)\|_{L^2}&\leq \sum_{|k-j|\leq 4}\sum_{k^\prime\leq k-2}\|\dot{\Delta}_j(\dot{\Delta}_{k^\prime}f\dot{\Delta}_kg)\|_{L^2}\leq\sum_{|k-j|\leq 4}\sum_{k^\prime\leq k-2}\|\dot{\Delta}_{k^\prime}f\|_{L^{p^\ast}}\|\dot{\Delta}_kg\|_{L^p}\\[1ex]
&\lesssim\sum_{|k-j|\leq 4}\sum_{k^\prime\leq k-2}2^{k^\prime(\frac{2N}{p}-\frac{N}{2})}\|\dot{\Delta}_{k^\prime}f\|_{L^p}\|\dot{\Delta}_kg\|_{L^p}\\[1ex]
&\lesssim\sum_{|k-j|\leq 4}\sum_{k^\prime\leq k-2}2^{k^\prime(\frac{2N}{p}-\frac{N}{2}-s)}2^{k^\prime s}
\|\dot{\Delta}_{k^\prime}f\|_{L^p}2^{k(s-\frac{2N}{p}+1)}2^{k(-s+\frac{2N}{p}-1)}
\|\dot{\Delta}_kg\|_{L^p}\\[1ex]
&\lesssim c(j)2^{j(1-\frac{N}{2})}
\|f\|_{\dot{B}_{p,1}^{s}}\|g\|_{\dot{B}_{p,1}^{-s+\frac{2N}{p}-1}},
\end{split}
\end{equation}
which yields \eqref{5.14}. Where we used that $p^\ast\geq p$ if $2\leq p\leq 4$ in the third inequality, and the condition $\frac{2N}{p}-\frac{N}{2}-s\geq 0$  in the last inequality.
\end{proof}

\begin{proof}[Proof of \eqref{5.15}] From the definition of $R(f,g)$, it follows that
\begin{equation}\label{5.17}
\begin{split}
&\quad\|\dot{\Delta}_jR(f,g)\|_{L^2}\leq \sum_{k\geq j-3}\sum_{|k-k^\prime|\leq 1}
\|\dot{\Delta}_j(\dot{\Delta}_{k}f\dot{\Delta}_{k^\prime}g)\|_{L^2}\\[1ex]
&\lesssim 2^{j(\frac{2N}{p}-\frac{N}{2})}\sum_{k\geq j-3}\sum_{|k-k^\prime|\leq 1}2^{k(-s)}2^{ks}\|\dot{\Delta}_{k}f\|_{L^{p}}2^{k^\prime(s-\frac{2N}{p}+1)}
2^{k^\prime(-s+\frac{2N}{p}-1)}\|\dot{\Delta}_{k^\prime}g\|_{L^{p}}\\[1ex]
&\lesssim2^{j(\frac{2N}{p}-\frac{N}{2})}\sum_{k\geq j-3}2^{k(-\frac{2N}{p}+1)}c^2(k)\|f\|_{\dot{B}_{p,1}^{s}}
\|g\|_{\dot{B}_{p,1}^{-s+\frac{2N}{p}-1}}\\[1ex]
&\lesssim c(j)2^{j(1-\frac{N}{2})}\|f\|_{\dot{B}_{p,1}^{s}}
\|g\|_{\dot{B}_{p,1}^{-s+\frac{2N}{p}-1}},
\end{split}
\end{equation}
which yields \eqref{5.15}. Where we use that $1\leq \frac{p}{2}\leq 2$  in the second inequality and $1-\frac{2N}{p}\leq 0$ in the last inequality.
\end{proof}

 We claim that
\be\label{5.13}
\|(f,{\bf g}, {\bf m})\|_{\dot{B}_{2,1}^{\frac{N}{2}-1}}^\ell\lesssim\mathcal{X}_p(t)\Big(\|(a,\bu,\bH)\|_{\dot{B}_{2,1}^{\frac{N}{2}+1}}^\ell
+\|a\|_{\dot{B}_{p,1}^{\frac{N}{p}}}^h+\|(\bu,\bH)\|_{\dot{B}_{p,1}^{\frac{N}{p}+1}}^h\Big).
\ee
In what follows, we will prove \eqref{5.13} and  inequalities \eqref{5.14} and \eqref{5.15} are often used for the purpose.

\underline{Estimate of $a{\rm div}\bu$}. Decomposing
$$a{\rm div}\bu=T_a{\rm div}\bu+R(a,{\rm div}\bu)+T_{{\rm div}\bu}a^\ell+T_{{\rm div}\bu}a^h,$$
one has
\be\label{5.18}
\|T_a{\rm div}\bu+R(a,{\rm div}\bu)\|_{\dot{B}_{2,1}^{\frac{N}{2}-1}}^\ell\lesssim\|a\|_{\dot{B}_{p,1}^{\frac{N}{p}-1}}
\|{\rm div}\bu\|_{\dot{B}_{p,1}^{\frac{N}{p}}}\lesssim\|a\|_{\dot{B}_{p,1}^{\frac{N}{p}-1}}
\|\bu\|_{\dot{B}_{p,1}^{\frac{N}{p}+1}}.
\ee
\be\label{5.19}
\|T_{{\rm div}\bu}a^\ell\|_{\dot{B}_{2,1}^{\frac{N}{2}-1}}^\ell\lesssim\|{\rm div}\bu\|_{L^\infty}
\|a^\ell\|_{\dot{B}_{2,1}^{\frac{N}{2}-1}}\lesssim\|\bu\|_{\dot{B}_{p,1}^{\frac{N}{p}+1}}
\|a\|_{\dot{B}_{2,1}^{\frac{N}{2}-1}}^\ell.
\ee
To handle the last term in the decomposition of $a{\rm div}\bu$, we observe that owing to the spectral cut-off, there exists
a universal integer $N_0$ such that
$$
(T_{{\rm div}\bu}a^h)^\ell=\dot{S}_{k_0+1}\Big(\sum_{|k-k_0|\leq N_0}\dot{S}_{k-1}({\rm div}\bu)\dot{\Delta}_ka^h\Big).
$$
Thus, one has
\be\label{5.20}
\begin{split}
&\quad\|T_{{\rm div}\bu}a^h\|_{\dot{B}_{2,1}^{\frac{N}{2}-1}}^\ell\approx 2^{k_0(\frac{N}{2}-1)}\sum_{|k-k_0|\leq N_0}
\|\dot{S}_{k-1}({\rm div}\bu)\dot{\Delta}_ka^h\|_{L^2}\\[1ex]
&\lesssim 2^{k_0(\frac{N}{2}-1)}\sum_{|k-k_0|\leq N_0}\|\dot{S}_{k-1}({\rm div}\bu)\|_{L^{p^\ast}}\|\dot{\Delta}_ka^h\|_{L^p}\\[1ex]
&\lesssim
2^{k_0(\frac{N}{2}-1)}\sum_{|k-k_0|\leq N_0}\sum_{k^\prime\leq k-2}\|\dot{\Delta}_{k^\prime}({\rm div}\bu)\|_{L^{p^\ast}}\|\dot{\Delta}_ka^h\|_{L^p}\\[1ex]
&\lesssim 2^{k_0(\frac{N}{2}-1)}\sum_{|k-k_0|\leq N_0}\sum_{k^\prime\leq k-2}2^{k^\prime(\frac{2N}{p}-\frac{N}{2})}
\|\dot{\Delta}_{k^\prime}({\rm div}\bu)\|_{L^{p}}\|\dot{\Delta}_ka^h\|_{L^p}\\[1ex]
&\lesssim 2^{k_0(\frac{N}{2}-1)}\sum_{|k-k_0|\leq N_0}\sum_{k^\prime\leq k-2}2^{k^\prime(\frac{2N}{p}-\frac{N}{2}+2-\frac{N}{p})}
2^{k^\prime(\frac{N}{p}-2)}
\|\dot{\Delta}_{k^\prime}({\rm div}\bu)\|_{L^{p}}2^{k(-\frac{N}{p})}2^{k\frac{N}{p}}\|\dot{\Delta}_ka^h\|_{L^p}\\[1ex]
&\lesssim 2^{k_0}\|\bu\|_{\dot{B}_{p,1}^{\frac{N}{p}-1}}\|a\|_{\dot{B}_{p,1}^{\frac{N}{p}}}^h,
\end{split}
\ee
where we have used that $2+\frac{N}{p}-\frac{N}{2}>0$ in the last inequality since $p\leq \frac{2N}{N-2}$.

\underline{Estimate of $\bu\cdot\nabla a$}. We also decompose
$$\bu\cdot\nabla a=T_{\bu}\nabla a^\ell+T_{\bu}\nabla a^h+R(\bu,\nabla a)+T_{\nabla a}\bu,$$
and obtain from \eqref{5.14} and \eqref{5.15}  that
\be\label{5.21}
\begin{split}
\|T_{\nabla a}\bu+R(\bu,\nabla a)\|_{\dot{B}_{2,1}^{\frac{N}{2}-1}}^\ell\lesssim\|\nabla a\|_{\dot{B}_{p,1}^{\frac{N}{p}-2}}
\|\bu\|_{\dot{B}_{p,1}^{\frac{N}{p}+1}}\lesssim\Big(\|a\|_{\dot{B}_{2,1}^{\frac{N}{2}-1}}^\ell
+\|a\|_{\dot{B}_{p,1}^{\frac{N}{p}}}^h\Big)\|\bu\|_{\dot{B}_{p,1}^{\frac{N}{p}+1}},
\end{split}
\ee
\be\label{5.22}
\begin{split}
\|T_{\bu}\nabla a^\ell\|_{\dot{B}_{2,1}^{\frac{N}{2}-1}}\lesssim\|\bu\|_{\dot{B}_{p,1}^{\frac{N}{p}-1}}
\|\nabla a^\ell\|_{\dot{B}_{p,1}^{\frac{N}{p}}}\lesssim\|\bu\|_{\dot{B}_{p,1}^{\frac{N}{p}-1}}
\|a\|_{\dot{B}_{2,1}^{\frac{N}{2}+1}}^\ell.
\end{split}
\ee
Similar to \eqref{5.20}, it holds that
\be\label{5.23}
\begin{split}
&\quad\|T_{\bu}\nabla a^h\|_{\dot{B}_{2,1}^{\frac{N}{2}-1}}^\ell\approx 2^{k_0(\frac{N}{2}-1)}\sum_{|k-k_0|\leq N_0}
\|\dot{S}_{k-1}\bu\dot{\Delta}_k\nabla a^h\|_{L^2}\\[1ex]
&\lesssim 2^{k_0(\frac{N}{2}-1)}\sum_{|k-k_0|\leq N_0}\|\dot{S}_{k-1}\bu\|_{L^{p^\ast}}\|\dot{\Delta}_k\nabla a^h\|_{L^p}\\[1ex]
&\lesssim 2^{k_0(\frac{N}{2}-1)}\sum_{|k-k_0|\leq N_0}\sum_{k^\prime\leq k-2}2^{k^\prime(\frac{2N}{p}-\frac{N}{2}+1-\frac{N}{p})}
2^{k^\prime(\frac{N}{p}-1)}
\|\dot{\Delta}_{k^\prime}\bu\|_{L^{p}}2^{k(1-\frac{N}{p})}2^{k(\frac{N}{p}-1)}\|\dot{\Delta}_k\nabla a^h\|_{L^p}\\[1ex]
&\lesssim 2^{k_0}\|\bu\|_{\dot{B}_{p,1}^{\frac{N}{p}-1}}\|a\|_{\dot{B}_{p,1}^{\frac{N}{p}}}^h,
\end{split}
\ee
where we have used that $1+\frac{N}{p}-\frac{N}{2}\geq 0$ in the last inequality since $p\leq \frac{2N}{N-2}$.

\underline{Estimate of $\bu\cdot\nabla \bu$}. Similarly, we decompose $\bu\cdot\nabla \bu=T_{\bu}\nabla \bu+R(\bu,\nabla\bu)+T_{\nabla\bu}\bu$ and get that
\be\label{5.24}
\begin{split}
\|T_{\bu}\nabla\bu+R(\bu,\nabla \bu)\|_{\dot{B}_{2,1}^{\frac{N}{2}-1}}^\ell\lesssim\|\bu\|_{\dot{B}_{p,1}^{\frac{N}{p}-1}}
\|\nabla\bu\|_{\dot{B}_{p,1}^{\frac{N}{p}}}\lesssim\Big(\|\bu\|_{\dot{B}_{2,1}^{\frac{N}{2}-1}}^\ell
+\|\bu\|_{\dot{B}_{p,1}^{\frac{N}{p}-1}}^h\Big)\|\bu\|_{\dot{B}_{p,1}^{\frac{N}{p}+1}},
\end{split}
\ee
\be\label{5.25}
\begin{split}
\|T_{\nabla\bu}\bu\|_{\dot{B}_{2,1}^{\frac{N}{2}-1}}\lesssim\|\nabla\bu\|_{\dot{B}_{p,1}^{\frac{N}{p}-2}}
\|\bu\|_{\dot{B}_{p,1}^{\frac{N}{p}+1}}\lesssim\Big(\|\bu\|_{\dot{B}_{2,1}^{\frac{N}{2}-1}}^\ell
+\|\bu\|_{\dot{B}_{p,1}^{\frac{N}{p}-1}}^h\Big)\|\bu\|_{\dot{B}_{p,1}^{\frac{N}{p}+1}}.
\end{split}
\ee

\underline{Estimate of $\pi_1(a)\mathcal{A}\bu$}. Decomposing
$$
\pi_1(a)\mathcal{A}\bu=T_{\mathcal{A}\bu}\pi_1(a)
+R(\pi_1(a), \mathcal{A}\bu)+T_{\pi_1(a)}\mathcal{A}\bu^\ell+T_{\pi_1(a)}\mathcal{A}\bu^h,
$$
we have
\be\label{5.26}
\begin{split}
\|T_{\mathcal{A}\bu}\pi_1(a)+R(\mathcal{A}\bu,\pi_1(a)\|_{\dot{B}_{2,1}^{\frac{N}{2}-1}}^\ell\lesssim\|\mathcal{A}\bu \|_{\dot{B}_{p,1}^{\frac{N}{p}-1}}
\|\pi_1(a)\|_{\dot{B}_{p,1}^{\frac{N}{p}}}\lesssim\|a\|_{\dot{B}_{p,1}^{\frac{N}{p}}}
\|\bu\|_{\dot{B}_{p,1}^{\frac{N}{p}+1}},
\end{split}
\ee
\be\label{5.27}
\begin{split}
\|T_{\pi_1(a)}\mathcal{A}\bu^\ell\|_{\dot{B}_{2,1}^{\frac{N}{2}-1}}\lesssim
\|\pi_1(a)\|_{L^\infty}\|\mathcal{A}\bu \|_{\dot{B}_{2,1}^{\frac{N}{2}-1}}^\ell\lesssim\|a\|_{\dot{B}_{p,1}^{\frac{N}{p}}}
\|\bu\|_{\dot{B}_{2,1}^{\frac{N}{2}+1}}^\ell,
\end{split}
\ee
Similar to \eqref{5.20} again, it follows that
\be\label{5.28}
\begin{split}
&\quad\|T_{\pi_1(a)}\mathcal{A}\bu^h\|_{\dot{B}_{2,1}^{\frac{N}{2}-1}}^\ell\approx 2^{k_0(\frac{N}{2}-1)}\sum_{|k-k_0|\leq N_0}
\|\dot{S}_{k-1}\pi_1(a)\dot{\Delta}_k\mathcal{A}\bu^h\|_{L^2}\\[1ex]
&\lesssim 2^{k_0(\frac{N}{2}-1)}\sum_{|k-k_0|\leq N_0}\sum_{k^\prime\leq k-2}\|\dot{\Delta}_{k^\prime}\pi_1(a)\|_{L^{p^\ast}}\|\dot{\Delta}_k\mathcal{A}\bu^h\|_{L^p}\\[1ex]
&\lesssim 2^{k_0(\frac{N}{2}-1)}\sum_{|k-k_0|\leq N_0}\sum_{k^\prime\leq k-2}2^{k^\prime(\frac{2N}{p}-\frac{N}{2}+1-\frac{N}{p})}
2^{k^\prime(\frac{N}{p}-1)}
\|\dot{\Delta}_{k^\prime}\pi_1(a)\|_{L^{p}}2^{k(1-\frac{N}{p})}2^{k(\frac{N}{p}-1)}\|\dot{\Delta}_k\mathcal{A}\bu^h\|_{L^p}\\[1ex]
&\lesssim 2^{k_0}\|\pi_1(a)\|_{\dot{B}_{p,1}^{\frac{N}{p}-1}}\|\mathcal{A}\bu\|_{\dot{B}_{p,1}^{\frac{N}{p}-1}}^h
\lesssim \Big(1+\|a\|_{\dot{B}_{p,1}^{\frac{N}{p}}}\Big)\|a\|_{\dot{B}_{p,1}^{\frac{N}{p}-1}}
\|\bu\|_{\dot{B}_{p,1}^{\frac{N}{p}+1}}^h\\[1ex]
&\lesssim\Big(1+\|a\|_{\dot{B}_{2,1}^{\frac{N}{2}-1}}^\ell+\|a\|_{\dot{B}_{p,1}^{\frac{N}{p}}}^h\Big)
\Big(\|a\|_{\dot{B}_{2,1}^{\frac{N}{2}-1}}^\ell+\|a\|_{\dot{B}_{p,1}^{\frac{N}{p}}}^h\Big)
\|\bu\|_{\dot{B}_{p,1}^{\frac{N}{p}+1}}^h.
\end{split}
\ee

\underline{Estimate of $\pi_2(a)\nabla a$}. Decomposing
$$
\pi_2(a)\nabla a=T_{\nabla a}\pi_2(a)
+R(\pi_2(a), \nabla a)+T_{\pi_2(a)}\nabla a^\ell+T_{\pi_2(a)}\nabla a^h,
$$
we obtain
\be\label{5.29}
\begin{split}
&\quad\|T_{\nabla a}\pi_2(a)+R(\nabla a,\pi_2(a)\|_{\dot{B}_{2,1}^{\frac{N}{2}-1}}\lesssim\|\nabla a \|_{\dot{B}_{p,1}^{\frac{N}{p}-1}}
\|\pi_2(a)\|_{\dot{B}_{p,1}^{\frac{N}{p}}}\\[1ex]
&\lesssim\|a\|_{\dot{B}_{p,1}^{\frac{N}{p}}}^2\lesssim
\Big(\|a\|_{\dot{B}_{2,1}^{\frac{N}{2}}}^\ell\Big)^2+\Big(\|a\|_{\dot{B}_{p,1}^{\frac{N}{p}}}^h\Big)^2
\lesssim \|a\|_{\dot{B}_{2,1}^{\frac{N}{2}-1}}^\ell \|a\|_{\dot{B}_{2,1}^{\frac{N}{2}+1}}^\ell+\Big(\|a\|_{\dot{B}_{p,1}^{\frac{N}{p}}}^h\Big)^2,
\end{split}
\ee
\be\label{5.30}
\begin{split}
\|T_{\pi_2(a)}\nabla a^\ell\|_{\dot{B}_{2,1}^{\frac{N}{2}-1}}\lesssim
\|\pi_2(a)\|_{\dot{B}_{p,1}^{\frac{N}{p}-1}}\|\nabla a^\ell\|_{\dot{B}_{p,1}^{\frac{N}{p}}}\lesssim\Big(1+\|a\|_{\dot{B}_{p,1}^{\frac{N}{p}}}\Big)\|a\|_{\dot{B}_{p,1}^{\frac{N}{p}-1}}
\|a\|_{\dot{B}_{2,1}^{\frac{N}{2}+1}}^\ell.
\end{split}
\ee
Similar to \eqref{5.28}, it follows that
\be\label{5.31}
\begin{split}
&\quad\|T_{\pi_2(a)}\nabla a^h\|_{\dot{B}_{2,1}^{\frac{N}{2}-1}}^\ell\approx 2^{k_0(\frac{N}{2}-1)}\sum_{|k-k_0|\leq N_0}
\|\dot{S}_{k-1}\pi_2(a)\dot{\Delta}_k\nabla a^h\|_{L^2}\\[1ex]
&\lesssim 2^{k_0(\frac{N}{2}-1)}\sum_{|k-k_0|\leq N_0}\sum_{k^\prime\leq k-2}2^{k^\prime(\frac{2N}{p}-\frac{N}{2}+1-\frac{N}{p})}
2^{k^\prime(\frac{N}{p}-1)}
\|\dot{\Delta}_{k^\prime}\pi_2(a)\|_{L^{p}}2^{k(1-\frac{N}{p})}2^{k(\frac{N}{p}-1)}\|\dot{\Delta}_k\nabla a^h\|_{L^p}\\[1ex]
&\lesssim 2^{k_0}\|\pi_2(a)\|_{\dot{B}_{p,1}^{\frac{N}{p}-1}}\|\nabla a\|_{\dot{B}_{p,1}^{\frac{N}{p}-1}}^h
\lesssim \Big(1+\|a\|_{\dot{B}_{p,1}^{\frac{N}{p}}}\Big)\|a\|_{\dot{B}_{p,1}^{\frac{N}{p}-1}}
\|a\|_{\dot{B}_{p,1}^{\frac{N}{p}}}^h\\[1ex]
&\lesssim\Big(1+\|a\|_{\dot{B}_{2,1}^{\frac{N}{2}-1}}^\ell+\|a\|_{\dot{B}_{p,1}^{\frac{N}{p}}}^h\Big)
\Big(\|a\|_{\dot{B}_{2,1}^{\frac{N}{2}-1}}^\ell+\|a\|_{\dot{B}_{p,1}^{\frac{N}{p}}}^h\Big)
\|a\|_{\dot{B}_{p,1}^{\frac{N}{p}}}^h.
\end{split}
\ee

\underline{Estimate of $\frac{1}{1+a}(2\widetilde{\mu}(a){\rm div}D(\bu)+\widetilde{\lambda}(a)\nabla{\rm div}\bu)$}.
The estimate of this term could be  performed  similar to that of  $\pi_1(a)\mathcal{A}\bu$ and the details are omitted here.

\underline{Estimate of $\frac{1}{1+a}(2\widetilde{\mu}^\prime(a)D(\bu)\cdot\nabla a+\widetilde{\lambda}^\prime(a){\rm div}\bu\nabla a)$}. We only deal with the term $\frac{2\widetilde{\mu}^\prime(a)}{1+a}D(\bu)\cdot\nabla a$  and the remainder term could be  similarly handled. Recalling \eqref{4.1000}, we derive
\be\label{5.32}
\begin{split}
&\quad\|T_{\nabla J(a)}D\bu+R(\nabla (J(a)),D\bu)\|_{\dot{B}_{2,1}^{\frac{N}{2}-1}}\lesssim\|\nabla J(a) \|_{\dot{B}_{p,1}^{\frac{N}{p}-1}}
\|D\bu\|_{\dot{B}_{p,1}^{\frac{N}{p}}}\lesssim
\|a\|_{\dot{B}_{p,1}^{\frac{N}{p}}}
\|\bu\|_{\dot{B}_{p,1}^{\frac{N}{p}+1}},
\end{split}
\ee
\be\label{5.33}
\begin{split}
&\quad\|T_{D\bu}\nabla J(a)^\ell\|_{\dot{B}_{2,1}^{\frac{N}{2}-1}}\lesssim\|D\bu\|_{\dot{B}_{p,1}^{\frac{N}{p}-2}}\|\nabla J(a)\|_{\dot{B}_{p,1}^{\frac{N}{p}+1}}^\ell
\lesssim
\|\bu\|_{\dot{B}_{p,1}^{\frac{N}{p}-1}}
\|a\|_{\dot{B}_{2,1}^{\frac{N}{2}+1}}^\ell,
\end{split}
\ee
and
\be\label{5.34}
\begin{split}
&\quad\|T_{D\bu}\nabla J(a)^h\|_{\dot{B}_{2,1}^{\frac{N}{2}-1}}^\ell\approx 2^{k_0(\frac{N}{2}-1)}\sum_{|k-k_0|\leq N_0}
\|\dot{S}_{k-1}D\bu\dot{\Delta}_k\nabla J(a)^h\|_{L^2}\\[1ex]
&\lesssim 2^{k_0(\frac{N}{2}-1)}\sum_{|k-k_0|\leq N_0}\sum_{k^\prime\leq k-2}\|\dot{\Delta}_{k^\prime}D\bu\|_{L^{p^\ast}}\|\dot{\Delta}_k\nabla J(a)^h\|_{L^p}\\[1ex]
&\lesssim 2^{k_0(\frac{N}{2}-1)}\sum_{|k-k_0|\leq N_0}\sum_{k^\prime\leq k-2}2^{k^\prime(\frac{2N}{p}-\frac{N}{2}+2-\frac{N}{p})}
2^{k^\prime(\frac{N}{p}-2)}
\|\dot{\Delta}_{k^\prime}D\bu\|_{L^{p}}2^{k(1-\frac{N}{p})}2^{k(\frac{N}{p}-1)}\|\dot{\Delta}_k\nabla J(a)^h\|_{L^p}\\[1ex]
&\lesssim 2^{k_0}\|D\bu\|_{\dot{B}_{p,1}^{\frac{N}{p}-2}}\|\nabla J(a)\|_{\dot{B}_{p,1}^{\frac{N}{p}-1}}^h
\lesssim \|\bu\|_{\dot{B}_{p,1}^{\frac{N}{p}-1}}
\|a\|_{\dot{B}_{p,1}^{\frac{N}{p}}}^h.
\end{split}
\ee

\underline{Estimate of $\pi_1(a)(\nabla (I\cdot \bH)-I\cdot \nabla \bH)$}. Recall that  we decompose
$\pi_1(a)=\pi_1^\prime(0)a+\bar{\pi}_1(a)a$. The estimate of $a(\nabla (I\cdot \bH)-I\cdot \nabla \bH)$
is similar to that of  $a{\rm div}\bu$ and we omit it here. The remaining term can be estimated as follows.
\be\label{5.35}
\begin{split}
\|T_{\bar{\pi}_1(a)a}\nabla \bH+R(\bar{\pi}_1(a)a,\nabla \bH)\|_{\dot{B}_{2,1}^{\frac{N}{2}-1}}
&\lesssim\|\bar{\pi}_1(a)a\|_{\dot{B}_{p,1}^{\frac{N}{p}-1}}
\|\nabla\bH\|_{\dot{B}_{p,1}^{\frac{N}{p}}}\\[1ex]
&\lesssim
\|a\|_{\dot{B}_{p,1}^{\frac{N}{p}}}\|a\|_{\dot{B}_{p,1}^{\frac{N}{p}-1}}
\|\bH\|_{\dot{B}_{p,1}^{\frac{N}{p}+1}},
\end{split}
\ee
\be\label{5.36}
\begin{split}
&\quad\|T_{\nabla\bH}\bar{\pi}_1(a)a\|_{\dot{B}_{2,1}^{\frac{N}{2}-1}}
\lesssim
\|\nabla\bH\|_{L^\infty}\|\bar{\pi}_1(a)a\|_{\dot{B}_{2,1}^{\frac{N}{2}-1}}\\[1ex]
&\lesssim
\|\nabla\bH\|_{\dot{B}_{p,1}^{\frac{N}{p}}}\Big(\|T_{\bar{\pi}_1(a)}a+R(\bar{\pi}_1(a), a)\|_{\dot{B}_{2,1}^{\frac{N}{2}-1}}+\|T_{a}\bar{\pi}_1(a)\|_{\dot{B}_{2,1}^{\frac{N}{2}-1}}\Big)\\[1ex]
&\lesssim\|\bH\|_{\dot{B}_{p,1}^{\frac{N}{p}+1}}\Big(\|\bar{\pi}_1(a)\|_{\dot{B}_{p,1}^{\frac{N}{p}-1}}
\|a\|_{\dot{B}_{p,1}^{\frac{N}{p}}}
+\|a\|_{\dot{B}_{p,1}^{\frac{N}{p}-1}}\|\bar{\pi}_1(a)\|_{\dot{B}_{p,1}^{\frac{N}{p}}}\Big)\\[1ex]
&\lesssim\|\bH\|_{\dot{B}_{p,1}^{\frac{N}{p}+1}}\|a\|_{\dot{B}_{p,1}^{\frac{N}{p}}}
\|a\|_{\dot{B}_{p,1}^{\frac{N}{p}-1}}\Big(1+\|a\|_{\dot{B}_{p,1}^{\frac{N}{p}}}\Big).
\end{split}
\ee

\underline{Estimate of $\frac{1}{1+a}(\frac{1}{2}\nabla |\bH|^2-\bH\cdot \nabla \bH)$}. By \eqref{4.200},  the term with $\bH\cdot((\nabla \bH)^T-\nabla \bH)$ may be handled  similar to  $\bu\cdot\nabla\bu$, and the term with $\pi_1(a)\bH\cdot((\nabla \bH)^T-\nabla \bH)$ would be estimated as follows.
\be\label{5.37}
\begin{split}
\|T_{{\pi}_1(a)\bH}\nabla \bH+R({\pi}_1(a)\bH,\nabla \bH)\|_{\dot{B}_{2,1}^{\frac{N}{2}-1}}
&\lesssim\|{\pi}_1(a)\bH\|_{\dot{B}_{p,1}^{\frac{N}{p}-1}}
\|\nabla\bH\|_{\dot{B}_{p,1}^{\frac{N}{p}}}\\[1ex]
&\lesssim
\|a\|_{\dot{B}_{p,1}^{\frac{N}{p}}}\|\bH\|_{\dot{B}_{p,1}^{\frac{N}{p}-1}}
\|\bH\|_{\dot{B}_{p,1}^{\frac{N}{p}+1}},
\end{split}
\ee
\be\label{5.38}
\begin{split}
\|T_{\nabla\bH}{\pi}_1(a)\bH\|_{\dot{B}_{2,1}^{\frac{N}{2}-1}}
&\lesssim
\|\nabla\bH\|_{\dot{B}_{p,1}^{\frac{N}{p}-1}}\|{\pi}_1(a)\bH\|_{\dot{B}_{p,1}^{\frac{N}{p}}}\\[1ex]
&\lesssim\|\bH\|_{\dot{B}_{p,1}^{\frac{N}{p}}}^2\|{\pi}_1(a)\|_{\dot{B}_{p,1}^{\frac{N}{p}}}
\lesssim \|\bH\|_{\dot{B}_{p,1}^{\frac{N}{p}-1}}\|\bH\|_{\dot{B}_{p,1}^{\frac{N}{p}+1}}\|a\|_{\dot{B}_{p,1}^{\frac{N}{p}}}.
\end{split}
\ee

\underline{Estimate of ${\bf m}$}.
The estimation of ${\bf m}$ is  similar to that of  $\bu\cdot\nabla \bu$ and the details are omitted.
So far, the inequality \eqref{5.13} is proved.

Inserting \eqref{5.2},\eqref{5.3},\eqref{5.12} and \eqref{5.13} into \eqref{5.1} and applying the fact that $\mathcal{X}_p(t)\lesssim\mathcal{X}_{p,0}\ll 1$ for all $t\geq 0$, we end up with
\be\label{5.39}
\begin{split}
  &\quad\frac{d}{dt}\Big(\|(a, \bu, \bH)\|_{\dot{B}_{2,1}^{\frac{N}{2}-1}}^\ell+\|(\nabla a, \bu, \bH)\|_{\dot{B}_{p,1}^{\frac{N}{p}-1}}^h\Big)\\[1ex]
  &\quad\quad\quad\quad\quad\quad\quad\quad\quad\quad
  +\Big(\|(a, \bu, \bH)\|_{\dot{B}_{2,1}^{\frac{N}{2}+1}}^\ell
  +\|a\|_{\dot{B}_{p,1}^{\frac{N}{p}}}^h+\|(\bu, \bH)\|_{\dot{B}_{p,1}^{\frac{N}{p}+1}}^h\Big)\leq 0.
\end{split}
\ee

In what follows, we will employ  the following interpolation inequalities:
\begin{prop}\label{pra.7}(\cite{xin2018optimal})
Suppose that $m\neq \rho$. Then it holds that
$$
\|f\|_{\dot{B}_{p,1}^j}^\ell\lesssim (\|f\|_{\dot{B}_{r,\infty}^m}^\ell)^{1-\theta}(\|f\|_{\dot{B}_{r,\infty}^{\rho}}^\ell)^{\theta},\,\,\,\,\,\,
\|f\|_{\dot{B}_{p,1}^j}^h\lesssim (\|f\|_{\dot{B}_{r,\infty}^m}^h)^{1-\theta}(\|f\|_{\dot{B}_{r,\infty}^{\rho}}^h)^{\theta}
$$
where $j+N(\frac{1}{r}-\frac{1}{p})=m(1-\theta)+\rho\theta$ for $0<\theta<1$ and $1\leq r\leq p\leq \infty$.
\end{prop}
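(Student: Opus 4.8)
The plan is to reduce the vector statement to a scalar dyadic-summation estimate through Bernstein's inequality, and then run the classical frequency-splitting and optimization argument; the low-frequency and high-frequency inequalities are established by the very same mechanism, with only the roles of the two endpoint exponents interchanged. First I would invoke Bernstein's inequality (Lemma \ref{le2.1}) in the form $\|\dot{\Delta}_k f\|_{L^p}\lesssim 2^{kN(\frac{1}{r}-\frac{1}{p})}\|\dot{\Delta}_k f\|_{L^r}$, which is legitimate precisely because $1\le r\le p\le\infty$. Setting $s\equ j+N(\frac{1}{r}-\frac{1}{p})$ and $u_k\equ\|\dot{\Delta}_k f\|_{L^r}$, this turns the low-frequency left-hand side into $\|f\|_{\dot{B}_{p,1}^j}^\ell\lesssim\sum_{k\le k_0}2^{ks}u_k$, so it suffices to bound the scalar sum $\sum_{k\le k_0}2^{ks}u_k$ by $M^{1-\theta}R^\theta$, where $M\equ\sup_{k\le k_0}2^{km}u_k$ and $R\equ\sup_{k\le k_0}2^{k\rho}u_k$ are exactly the two $\dot{B}_{r,\infty}$ low-frequency norms appearing on the right. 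The defining relation $s=(1-\theta)m+\theta\rho$ exhibits $s$ as a strict convex combination of $m$ and $\rho$.

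The heart of the matter is the two-sided control $u_k\le 2^{-km}M$ and $u_k\le 2^{-k\rho}R$. Assuming without loss of generality $m<\rho$ (the case $m>\rho$ is symmetric, and the hypothesis $m\ne\rho$ guarantees these endpoints are genuinely distinct), I would split the sum at an integer threshold $K\le k_0$: for $k\le K$ use $u_k\le 2^{-km}M$ to obtain $\sum_{k\le K}2^{k(s-m)}M\lesssim 2^{K(s-m)}M$, the series converging since $s-m=\theta(\rho-m)>0$; for $K<k\le k_0$ use $u_k\le 2^{-k\rho}R$ to obtain $\sum_{K<k\le k_0}2^{k(s-\rho)}R\lesssim 2^{K(s-\rho)}R$, a finite geometric sum dominated by its lowest term since $s-\rho=(\theta-1)(\rho-m)<0$. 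Optimizing the resulting bound $2^{K(s-m)}M+2^{K(s-\rho)}R$ by choosing $2^K\approx(R/M)^{1/(\rho-m)}$ balances the two contributions, and because $(s-m)/(\rho-m)=\theta$ this produces exactly $M^{1-\theta}R^\theta$. The high-frequency inequality is proved identically, summing over $k\ge k_0+1$ and arranging the split so that the slowly decaying endpoint governs the tail $k\to+\infty$.

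The only mildly delicate point is that the optimal real threshold $K=\frac{1}{\rho-m}\log_2(R/M)$ may land outside the admissible frequency window, namely $K>k_0$ in the low-frequency case or $K<k_0+1$ in the high-frequency case. In that degenerate situation one of the two partial sums is empty, and I would bound the entire sum by the single available endpoint estimate, then verify directly that the defining inequality $R>M\,2^{k_0(\rho-m)}$ (respectively its high-frequency analogue) forces $2^{k_0(s-m)}M\le M^{1-\theta}R^\theta$, so the asserted bound persists. Apart from this elementary bookkeeping the argument is entirely routine, and I anticipate no serious obstacle; the whole point is simply that $m\ne\rho$ and $0<\theta<1$ make the geometric splitting summable on both sides and the exponent balancing exact.
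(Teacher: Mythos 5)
Your proof is correct, including the balancing of the threshold $2^K\approx (R/M)^{1/(\rho-m)}$ and the verification that in the degenerate case ($K>k_0$, resp.\ $K<k_0+1$) the single-endpoint bound still yields $M^{1-\theta}R^{\theta}$. Note that the paper itself gives no proof of Proposition \ref{pra.7} — it is quoted verbatim from \cite{xin2018optimal} — so there is nothing to compare against; your argument (Bernstein's inequality to pass from $L^p$ to $L^r$, then the classical dyadic splitting-and-optimization between the two $\ell^\infty$ endpoint norms, with the hypotheses $m\neq\rho$, $0<\theta<1$ ensuring both geometric tails converge) is precisely the standard one underlying the cited result.
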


Due to $-\sigma_1<\frac{N}{2}-1\leq \frac{N}{p}<\frac{N}{2}+1$, it follows from Proposition \ref{pra.7} that
\be\label{5.40}
\|(a,\bu,\bH)\|_{\dot{B}_{2,1}^{\frac{N}{2}-1}}^\ell
\leq C\Big(\|(a,\bu,\bH)\|_{\dot{B}_{2,\infty}^{-\sigma_1}}^\ell\Big)^{\theta_0}
\Big(\|(a,\bu,\bH)\|_{\dot{B}_{2,\infty}^{\frac{N}{2}+1}}^\ell\Big)^{1-\theta_0},
\ee
where $\theta_0=\frac{2}{N/2+1+\sigma_1}\in (0,1)$. In view of \eqref{4.46}, we have
$$
\|(a,\bu,\bH)\|_{\dot{B}_{2,\infty}^{\frac{N}{2}+1}}^\ell\geq c_0\Big(\|(a,\bu,\bH)\|_{\dot{B}_{2,1}^{\frac{N}{2}-1}}^\ell\Big)^{\frac{1}{1-\theta_0}}
$$
where $c_0=C^{-\frac{1}{1-\theta_0}}C_0^{-\frac{\theta_0}{1-\theta_0}}$. Moreover, it follows from
$\|(\nabla a, \bu, \bH)\|_{\dot{B}_{p,1}^{\frac{N}{p}-1}}^h\leq \mathcal{X}_p(t)\lesssim \mathcal{X}_{p,0}\ll 1$
for all $t\geq 0$ that
$$
\|a\|_{\dot{B}_{p,1}^{\frac{N}{p}}}^h\geq \Big(\|a\|_{\dot{B}_{p,1}^{\frac{N}{p}}}^h\Big)^{\frac{1}{1-\theta_0}},\,\,\,\,\,
\|(\bu, \bH)\|_{\dot{B}_{p,1}^{\frac{N}{p}+1}}^h\geq \Big(\|(\bu, \bH)\|_{\dot{B}_{p,1}^{\frac{N}{p}-1}}^h\Big)^{\frac{1}{1-\theta_0}}.
$$
Thus, there exists a constant $\tilde{c}_0>0$ such that the following Lyapunov-type inequality holds:
\be\label{5.41}
\begin{split}
  &\quad\frac{d}{dt}\Big(\|(a, \bu, \bH)\|_{\dot{B}_{2,1}^{\frac{N}{2}-1}}^\ell+\|(\nabla a, \bu, \bH)\|_{\dot{B}_{p,1}^{\frac{N}{p}-1}}^h\Big)\\[1ex]
  &\quad\quad\quad\quad\quad\quad\quad
  +\tilde{c}_0\Big(\|(a, \bu, \bH)\|_{\dot{B}_{2,1}^{\frac{N}{2}-1}}^\ell
  +\|(\nabla a, \bu, \bH)\|_{\dot{B}_{p,1}^{\frac{N}{p}-1}}^h\Big)^{1+\frac{2}{N/2-1+\sigma_1}}\leq 0.
\end{split}
\ee
Solving \eqref{5.41}  yields
\be\label{5.42}
\begin{split}
&\quad\|(a, \bu, \bH)(t)\|_{\dot{B}_{2,1}^{\frac{N}{2}-1}}^\ell+\|(\nabla a, \bu, \bH)(t)\|_{\dot{B}_{p,1}^{\frac{N}{p}-1}}^h\\[1ex]
&\leq\Big(\mathcal{X}_{p,0}^{-\frac{2}{N/2-1+\sigma_1}}
+\frac{2\tilde{c}_0t}{N/2-1+\sigma_1}\Big)^{-\frac{N/2-1+\sigma_1}{2}}
\lesssim (1+t)^{-\frac{N/2-1+\sigma_1}{2}}
\end{split}
\ee
for all $t\geq 0$. Through the embedding properties in Proposition \ref{pr2.1}, we arrive at
\be\label{5.43}
\begin{split}
\|(a, \bu, \bH)(t)\|_{\dot{B}_{p,1}^{\frac{N}{p}-1}}\lesssim\|(a, \bu, \bH)(t)\|_{\dot{B}_{2,1}^{\frac{N}{2}-1}}^\ell+\|(\nabla a, \bu, \bH)(t)\|_{\dot{B}_{p,1}^{\frac{N}{p}-1}}^h
\lesssim (1+t)^{-\frac{N/2-1+\sigma_1}{2}}.
\end{split}
\ee
In addition, if $\sigma\in (-\sigma_1-N(\frac{1}{2}-\frac{1}{p}), \frac{N}{p}-1)$, then employing Proposition
\ref{pra.7} once again implies that
\be\label{5.44}
\begin{split}
\|(a, \bu, \bH)(t)\|_{\dot{B}_{p,1}^{\sigma}}^\ell\lesssim\|(a, \bu, \bH)(t)\|_{\dot{B}_{2,1}^{\sigma+N(\frac{1}{2}-\frac{1}{p})}}^\ell
\lesssim\Big(\|(a,\bu,\bH)\|_{\dot{B}_{2,\infty}^{-\sigma_1}}^\ell\Big)^{\theta_1}
\Big(\|(a,\bu,\bH)\|_{\dot{B}_{2,\infty}^{\frac{N}{2}-1}}^\ell\Big)^{1-\theta_1},
\end{split}
\ee
where
$$
\theta_1=\frac{\frac{N}{p}-1-\sigma}{\frac{N}{2}-1+\sigma_1}\in(0,1).
$$
Note that
$$
\|(a,\bu,\bH)\|_{\dot{B}_{2,\infty}^{-\sigma_1}}^\ell\leq C_0
$$
for all $t\geq 0$.  From \eqref{5.42} and \eqref{5.44}, we deduce that
\be\label{5.45}
\begin{split}
\|(a, \bu, \bH)(t)\|_{\dot{B}_{p,1}^{\sigma}}^\ell\lesssim
\Big[(1+t)^{-\frac{N/2-1+\sigma_1}{2}}\Big]^{1-\theta_1}
=(1+t)^{-\frac{N}{2}(\frac{1}{2}-\frac{1}{p})-\frac{\sigma+\sigma_1}{2}}
\end{split}
\ee
for all $t\geq 0$, which leads to
\be\label{5.46}
\begin{split}
\|(a, \bu, \bH)(t)\|_{\dot{B}_{p,1}^{\sigma}}\lesssim
\|(a, \bu, \bH)(t)\|_{\dot{B}_{p,1}^{\sigma}}^\ell+\|(a, \bu, \bH)(t)\|_{\dot{B}_{p,1}^{\sigma}}^h
\lesssim (1+t)^{-\frac{N}{2}(\frac{1}{2}-\frac{1}{p})-\frac{\sigma+\sigma_1}{2}}
\end{split}
\ee
for $\sigma\in (-\sigma_1-N(\frac{1}{2}-\frac{1}{p}), \frac{N}{p}-1)$. So far, the proof of Theorem \ref{th1.2} is completed.
\subsection{Proof of Corollary \ref{col1}} In fact, Corollary \ref{col1} can be regarded as the direct consequence of the following interpolation inequality:
\begin{prop}\label{pra.8}(\cite{bahouri2011fourier})
The following interpolation inequality holds true:
$$
\|\Lambda^lf\|_{L^r}\lesssim \|\Lambda^mf\|_{L^q}^{1-\theta}\|\Lambda^kf\|_{L^q}^\theta,
$$
whenever $0\leq \theta\leq 1, 1\leq q\leq r\leq \infty$ and
$$
l+N\Big(\frac{1}{q}-\frac{1}{r}\Big)=m(1-\theta)+k\theta.
$$
\end{prop}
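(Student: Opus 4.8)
The plan is to prove this as a Gagliardo--Nirenberg type inequality via the homogeneous Littlewood--Paley decomposition together with Bernstein's inequality (Lemma \ref{le2.1}), followed by a splitting of the frequency sum at an optimized threshold. The degenerate cases are immediate: if $\theta\in\{0,1\}$ or $m=k$, then the dimensional relation $l+N(\frac1q-\frac1r)=m(1-\theta)+k\theta$ forces the claim to reduce to a single application of Bernstein's inequality, so from now on I assume $\theta\in(0,1)$ and, by the symmetry $(m,\theta)\leftrightarrow(k,1-\theta)$ under which both hypothesis and conclusion are invariant, that $m<k$. Set $s=l+N(\frac1q-\frac1r)$, so that $s=m(1-\theta)+k\theta$ and $m<s<k$.

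First I would reduce the $L^r$ norm on the left to a frequency sum. Since $f=\sum_{j\in\mathbb{Z}}\dot{\Delta}_jf$ in $\mathcal{Y}^\prime(\mathbb{R}^N)$, the triangle inequality over the dyadic blocks (i.e.\ the standard embedding $\dot{B}_{r,1}^0\hookrightarrow L^r$, valid for all $1\le r\le\infty$) gives
$$
\|\Lambda^lf\|_{L^r}\lesssim\sum_{j\in\mathbb{Z}}\|\dot{\Delta}_j\Lambda^lf\|_{L^r}\approx\sum_{j\in\mathbb{Z}}2^{jl}\|\dot{\Delta}_jf\|_{L^r},
$$
the last comparison using the two-sided annulus estimate in Lemma \ref{le2.1} (the Fourier support of $\dot{\Delta}_jf$ lies in $2^j\mathcal{C}$). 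For each fixed $j$ I then apply Bernstein's inequality: the ball-type estimate with $q\le r$ gives $\|\dot{\Delta}_jf\|_{L^r}\lesssim 2^{jN(\frac1q-\frac1r)}\|\dot{\Delta}_jf\|_{L^q}$, while the annulus estimate gives $2^{jm}\|\dot{\Delta}_jf\|_{L^q}\approx\|\dot{\Delta}_j\Lambda^mf\|_{L^q}$ and $2^{jk}\|\dot{\Delta}_jf\|_{L^q}\approx\|\dot{\Delta}_j\Lambda^kf\|_{L^q}$. Combining these with $l+N(\frac1q-\frac1r)=s$ produces the two companion bounds
$$
2^{jl}\|\dot{\Delta}_jf\|_{L^r}\lesssim 2^{j(s-m)}\|\dot{\Delta}_j\Lambda^mf\|_{L^q}
\quad\text{and}\quad
2^{jl}\|\dot{\Delta}_jf\|_{L^r}\lesssim 2^{j(s-k)}\|\dot{\Delta}_j\Lambda^kf\|_{L^q}.
$$
Finally, since $\dot{\Delta}_j$ is convolution against an $L^1$-normalized kernel, it is bounded on $L^q$ uniformly in $j$, whence $\|\dot{\Delta}_j\Lambda^mf\|_{L^q}\lesssim\|\Lambda^mf\|_{L^q}$ and likewise for $\Lambda^k$.

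The core step is the splitting and optimization. I would fix an integer $J$ and estimate the low-frequency block $j\le J$ by the first companion bound and the high-frequency block $j>J$ by the second:
$$
\|\Lambda^lf\|_{L^r}\lesssim\|\Lambda^mf\|_{L^q}\sum_{j\le J}2^{j(s-m)}+\|\Lambda^kf\|_{L^q}\sum_{j>J}2^{j(s-k)}.
$$
Because $s-m>0$ and $s-k<0$ (this is exactly where $m<s<k$, i.e.\ $\theta\in(0,1)$, is used), both geometric series converge, summing to $\lesssim 2^{J(s-m)}$ and $\lesssim 2^{J(s-k)}$ respectively, so that $\|\Lambda^lf\|_{L^r}\lesssim 2^{J(s-m)}\|\Lambda^mf\|_{L^q}+2^{J(s-k)}\|\Lambda^kf\|_{L^q}$. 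I would then choose $J$ to balance the two terms, taking $2^J\approx(\|\Lambda^kf\|_{L^q}/\|\Lambda^mf\|_{L^q})^{1/(k-m)}$ rounded to the nearest integer (which costs only a harmless constant). Since $s-m=\theta(k-m)$ and $s-k=-(1-\theta)(k-m)$, substituting this value makes each of the two terms equal to $\|\Lambda^mf\|_{L^q}^{1-\theta}\|\Lambda^kf\|_{L^q}^\theta$, yielding the claim. The main obstacle is purely bookkeeping: one must track the three Bernstein conversions so the powers of $2^j$ collapse exactly to $s-m$ and $s-k$, ensure the sign condition guaranteeing convergence, and dismiss the trivial sub-cases in which $\|\Lambda^mf\|_{L^q}$ or $\|\Lambda^kf\|_{L^q}$ is zero or infinite (where the inequality is vacuous or automatic).
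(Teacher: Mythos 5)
The paper offers no proof of Proposition \ref{pra.8} at all---it is quoted from \cite{bahouri2011fourier}---so there is no argument to compare yours against line by line. What you give is the standard self-contained proof of this textbook fact, and in the main regime it is correct and complete: the reduction via $\dot{B}^0_{r,1}\hookrightarrow L^r$, the blockwise Bernstein estimates yielding the two companion bounds with exponents $s-m=\theta(k-m)>0$ and $s-k=-(1-\theta)(k-m)<0$, the split at a threshold $J$, and the optimization $2^{J}\approx\big(\|\Lambda^k f\|_{L^q}/\|\Lambda^m f\|_{L^q}\big)^{1/(k-m)}$ are exactly the mechanism behind the interpolation inequality $\|f\|_{\dot{B}^s_{q,1}}\lesssim\|f\|_{\dot{B}^m_{q,\infty}}^{1-\theta}\|f\|_{\dot{B}^k_{q,\infty}}^{\theta}$ in the cited reference, combined with the embeddings you use; the exponent bookkeeping is right. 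One small citation slip: since $l,m,k$ need not be integers, the equivalences $\|\dot{\Delta}_j\Lambda^s f\|_{L^p}\approx 2^{js}\|\dot{\Delta}_j f\|_{L^p}$ follow from \eqref{2.100} applied to $A(\xi)=|\xi|^{\pm s}$, not from Lemma \ref{le2.1}, which is stated for integer-order derivatives $D^k$.

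The one genuine flaw is your dismissal of the degenerate cases. When $\theta\in\{0,1\}$ (or $m=k$) and $q<r$, the claim does \emph{not} ``reduce to a single application of Bernstein's inequality'': Bernstein acts blockwise, and after the block estimates the dyadic factor becomes $2^{j\cdot 0}$, so the frequency sum no longer converges. What the statement asserts at $\theta=0$ is the critical homogeneous Sobolev embedding $\|\Lambda^l f\|_{L^r}\lesssim\|\Lambda^m f\|_{L^q}$ with $l+N(\frac{1}{q}-\frac{1}{r})=m$, which requires the Hardy--Littlewood--Sobolev inequality and is in fact \emph{false} at the Lebesgue endpoints $q=1$ and $r=\infty$ that the hypothesis $1\leq q\leq r\leq\infty$ permits. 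So the proposition as literally stated (with $0\leq\theta\leq 1$) cannot be rescued at those endpoints by any argument; the correct repair is either to restrict to $\theta\in(0,1)$ and $m\neq k$---which is the only regime the paper uses, since in \eqref{5.47} the exponent $\theta_2$ is arranged to lie strictly in $(0,1)$ by the choice of $\varepsilon$---or, for $\theta\in\{0,1\}$, to add the hypotheses $1<q\leq r<\infty$ (the case $q=r$ being trivial, as then $l=m$ or $l=k$) and invoke Hardy--Littlewood--Sobolev. With that caveat made explicit, your proof is correct in every case the paper actually needs.
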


 With the aid of  Proposition \ref{pra.8}, we define $\theta_2$ by the relation
$$
m(1-\theta_2)+k\theta_2=l+N\Big(\frac{1}{p}-\frac{1}{r}\Big),
$$
where $m=\frac{N}{p}-1$ and $k=-\sigma_1-N(\frac{1}{2}-\frac{1}{p})+\varepsilon$ with $\varepsilon>0$ small enough. It is easy to see that $\theta_2\in(0,1)$ if $\varepsilon>0$ is small enough. As a consequence, we conclude  by $\dot{B}_{p,1}^0\hookrightarrow L^p$ that
\be\label{5.47}
\begin{split}
&\quad\|\Lambda^l(a,\bu,\bH)\|_{L^r}\lesssim
\|\Lambda^m(a,\bu,\bH)\|_{L^p}^{1-\theta_2}\|\Lambda^k(a,\bu,\bH)\|_{L^p}^{\theta_2}\\[1ex]
&\lesssim \left[(1+t)^{-\frac{N}{2}(\frac{1}{2}-\frac{1}{p})-\frac{m+\sigma_1}{2}}\right]^{1-\theta_2}
\left[(1+t)^{-\frac{N}{2}(\frac{1}{2}-\frac{1}{p})-\frac{k+\sigma_1}{2}}\right]^{\theta_2}
=(1+t)^{-\frac{N}{2}(\frac{1}{2}-\frac{1}{r})-\frac{l+\sigma_1}{2}}
\end{split}
\ee
for $p\leq r\leq \infty$ and $l\in\mathbb{R}$ satisfying $-\sigma_1
-\frac{N}{2}+\frac{N}{p}<l+\frac{N}{p}-\frac{N}{r}\leq \frac{N}{p}-1$.
Thus, we finish the proof of Corollary \ref{col1}.

\providecommand{\href}[2]{#2}
\providecommand{\arxiv}[1]{\href{http://arxiv.org/abs/#1}{arXiv:#1}}
\providecommand{\url}[1]{\texttt{#1}}
\providecommand{\urlprefix}{URL }


\end{document}